\newtheorem{theorem}{Theorem}[section]
\newtheorem{lemma}[theorem]{Lemma}
\newtheorem{prop}[theorem]{Proposition}
\theoremstyle{definition}
\newtheorem{example}[theorem]{Example}
\newtheorem{remark}[theorem]{Remark}
\numberwithin{equation}{subsection}
\theoremstyle{plain}
\newtheorem*{ack}{Acknowledgement}
\newtheorem{conjecture}{Conjecture}
\newtheorem*{question}{Questions}
\newtheorem{corollary}[theorem]{Corollary}
\newcommand{\im}{\operatorname{Im}}
\newcommand{\Ker}{\operatorname{Ker}}
\newcommand{\Hom}{\operatorname{Hom}}
\newcommand{\Ha}{\operatorname{H}}
\newcommand{\Has}{\operatorname{HS}}
\newcommand{\res}{\operatorname{res}}
\newcommand{\sres}{\operatorname{s-res}}
\newcommand{\lres}{\operatorname{\lambda-res}}
\newcommand{\cores}{\operatorname{cores}}
\newcommand{\scores}{\operatorname{s-cores}}
\newcommand{\lcores}{\operatorname{\lambda-cores}}
\newcommand{\Ca}{\operatorname{C}}
\newcommand{\Ta}{\operatorname{T}}
\newcommand{\Za}{\operatorname{Z}}
\newcommand{\Cas}{\operatorname{CS}}
\newcommand{\KS}{\operatorname{KS}}
\newcommand{\tr}{\operatorname{tr}}
\newcommand{\Tr}{\operatorname{Tr}}
\newcommand{\Ka}{\operatorname{K}}
\newcommand{\Ba}{\operatorname{B}}
\newcommand{\Bas}{\operatorname{BS}}
\newcommand{\id}{\operatorname{id}}
\def\Z{\mathbb Z}
\numberwithin{equation}{subsection}
\begin{document}
\title{Exterior and symmetric (co)homology of groups}
\author[V. G. Bardakov]{Valeriy G. Bardakov}
\author[M. V. Neshchadim]{Mikhail V. Neshchadim}
\author[M. Singh]{Mahender Singh}

\date{\today}
\address{Sobolev Institute of Mathematics and Novosibirsk State University, Novosibirsk 630090, Russia.}
\address{Novosibirsk State Agrarian University, Dobrolyubova street, 160, Novosibirsk, 630039, Russia.}
\address{Regional Scientific and Educational Mathematical Center of Tomsk State University, 36 Lenin Ave., Tomsk, Russia.}
\email{bardakov@math.nsc.ru}

\address{Sobolev Institute of Mathematics and Novosibirsk State University, Novosibirsk 630090, Russia.}
\address{Regional Scientific and Educational Mathematical Center of Tomsk State University, 36 Lenin Ave., Tomsk, Russia.}
\email{neshch@math.nsc.ru}

\address{Department of Mathematical Sciences, Indian Institute of Science Education and Research (IISER) Mohali, Sector 81,  S. A. S. Nagar, P. O. Manauli, Punjab 140306, India.}
\email{mahender@iisermohali.ac.in}

\subjclass[2010]{Primary 20J06; Secondary 18G60.}
\keywords{classical cohomology, classical homology, corestriction map, exterior cohomology, exterior homology, restriction map, symmetric cohomology, symmetric homology}

\begin{abstract}
The paper investigates exterior and symmetric (co)homologies of groups. We introduce symmetric homology of groups and compute exterior and symmetric (co)homologies of some finite groups. We also compare the classical, exterior and symmetric (co)homologies. Finally, we derive restriction and corestriction homomorphisms for exterior cohomology.
\end{abstract}
\maketitle

\section{Introduction}
The classical (co)homology theory of groups has its origins both in algebra and topology. We refer to Brown \cite{Brown2010} for an excellent historical introduction of the subject. Several (co)homology theories of groups have been proposed ever since the subject was formalised by Eilenberg and MacLane \cite{Eilenberg1, Eilenberg2}. The purpose of this paper is to investigate two of these (co)homology theories, namely, the exterior and the symmetric (co)homologies of groups.
\par

Motivated by the construction of a homology theory for crossed simplicial groups by Fiedorowicz and Loday \cite{Fiedorowicz}, Staic \cite{Staic1} introduced the notion of a $\Delta$-group $\Gamma(X)$ for a topological space $X$. Given a group $G$ and a $G$-module $A$, Staic defined an action of the symmetric group $\Sigma_{*+1}$ on the cochain group $\Ca^*(G, A)$ used for the classical group cohomology and proved it to be compatible with the corresponding coboundary maps. The subcomplex of invariant elements $\{ \Ca^*(G, A)^{\Sigma_{*+1}} \}$ gives a new cohomology theory, denoted $\Has^*(G, A)$, and called the symmetric cohomology. It is proved in \cite{Staic1} that the $\Delta$-group $\Gamma(X)$ is determined by the action of $\pi_1(X)$ on $\pi_2(X)$, and an element of $\Has^3\big(\pi_1(X), \pi_2(X)\big)$. The inclusion of cochain complexes $\Ca^*(G, A)^{\Sigma_{*+1}} \hookrightarrow \Ca^*(G, A)$ gives a natural map $$\alpha^*: \Has^*(G, A) \longrightarrow \Ha^*(G, A).$$
\par

A continuous analogue of symmetric cohomology of topological groups, and a smooth analogue for Lie groups was proposed by Singh \cite{Singh}. Among other things, it was proved that the symmetric continuous cohomology of a profinite group with coefficients in a discrete module can be computed as the direct limit of the symmetric cohomology groups of its finite quotients with appropriate coefficients. Some related questions were discussed in \cite{BGSVW}. Developing Staic's work further, Todea  \cite{Todea} gave explicit constructions for the transfer, restriction and conjugation maps, and showed under some mild hypotheses that the family $\{\Has^*(H,A)\}_{H\le G}$ has the structure of a Mackey functor.
\par

Not so well-known is the interesting work \cite{Zarelua} of Zarelua, wherein he introduced exterior cohomology $\Ha^*_{\lambda}(G, A)$ and exterior homology $\Ha_*^{\lambda}(G, A)$ of groups. The exterior cohomology groups also come equipped with a natural map $$\beta^*: \Ha^*_{\lambda}(G, A) \longrightarrow \Ha^*(G, A),$$
and have the property that if $G$ is a finite group of order $d$, then $\Ha^i_{\lambda}(G, A)=0$ for all $i \ge d$. In \cite{Pirashvili}, Pirashvili has explored in detail the connections between the maps $\alpha^*$ and $\beta^*$. Among other results, she constructed a natural map $$\gamma^*:\Ha^*_{\lambda}(G, A) \longrightarrow \Has^*(G, A)$$
which is a split monomorphism such that the diagram
$$
\xymatrix{
\Ha^*_{\lambda}(G, M)  \ar[d]^{\beta^*} \ar[r]^{\gamma^*} & \Has^*(G, A) \ar[dl]^{\alpha^*}\\
\Ha^*(G, A) &}
$$
commutes. Further, in a recent work \cite{Pirashvili1}, the third symmetric cohomology has been linked  to crossed modules with certain properties.
\par

The purpose of this paper is to investigate exterior and symmetric (co)homologies of groups which are difficult to compute due to lack of computational machinery. A symmetric homology of groups is introduced and the same is computed for some small order groups. A major part of the paper deals with computations of exterior (co)homology of some finite groups. We also derive restriction and corestriction maps for exterior and symmetric cohomologies.

\par

The paper is organised as follows. Section \ref{sec2} recalls the construction of classical (co)homology which also serves our purpose of setting up the notation. In Section \ref{sec3}, we recall the definition of symmetric cohomology and propose a symmetric homology of groups. In Section \ref{sec4}, we recall the construction of exterior (co)homology of groups. In Section \ref{sec5}, we compare the classical, exterior and symmetric (co)homologies of groups leading to new (co)homologies and make basic observations about these (co)homologies. In Section \ref{sec6}, we compute symmetric homology of some small order groups. Section \ref{sec7} contains computations of exterior (co)homology of some finite groups.  Finally, in Section \ref{sec8}, we derive restriction and corestriction maps for exterior and symmetric cohomologies.
\par

Throughout the paper, we use $\partial_n$ for boundary maps and $\delta^n$ for coboundary maps when their definitions and underlying complexes are clear from the context. Further, following standard terminology, modules over the integral group ring $\mathbb{Z}[G]$ are referred as $G$-modules.
\medskip


\section{Classical (co)homology}\label{sec2}
We begin by recalling some standard definitions and facts from \cite{Brown}.

\subsection{Standard resolution for classical (co)homology}\label{sec-standard-resol}
Let  $G$ be a group and $\mathbb{Z}[G]$ the group ring of $G$ with integer coefficients. Denote  by $G^{n+1}$ the $(n+1)$-fold cartesian product of $G$, that is,
$$
G^{n+1}=\big\{\, (g_0,g_1,\ldots,g_n) \, | \, g_0,g_1,\ldots,g_n\in G \,\big\}.
$$
For each $n \geq 0$, let $\Ba_n(G):=\mathbb{Z}[G^{n+1}]$ be the free abelian group with basis  $G^{n+1}$. Then $G$ acts on $\Ba_n(G)$ by the rule
$$h(g_0,g_1,\ldots,g_n) = (hg_0,hg_1,\ldots,hg_n),$$
where $h, g_0, g_1, \ldots, g_n\in G$. This action turns $\Ba_n(G)$ into a left   $G$-module. Notice that $\Ba_n(G)$ is a free left $G$-module with basis
$$
\big\{(1,g_1,\ldots,g_n)\mid   g_1, \ldots, g_n\in G \big\}.
$$
For each $n \geq 1$, the map $\partial_n : \Ba_n(G) \longrightarrow \Ba_{n-1}(G)$ of left $G$-modules defined by
\begin{equation}\label{resolution-differential}
\partial_n(g_0,g_1,\ldots,g_n) =
 \sum\limits_{i=0}^{n}(-1)^i \big(g_0,\ldots,\widehat{g_i},\ldots,g_n\big),
\end{equation}
where $(g_0,\ldots,\widehat{g_i},\ldots,g_n)=(g_0,\ldots,g_{i-1},g_{i+1},\ldots,g_n)$ satisfy $\partial_n \partial_{n+1}=0$. Note that $\Ba_0(G)=\mathbb{Z}[G]$ and the augmentation homomorphism  $\varepsilon: \Ba_0(G) \longrightarrow \mathbb{Z}$ is given by
$$
\varepsilon \big( \sum n_i g_i \big)=\sum n_i,
$$
where  $n_i\in \mathbb{Z}$ and $g_i\in G$.
\par

The left $G$-modules $\Ba_n(G)$, $n \geq 0$, together with the boundary maps  $\partial_n$, $n \geq 1$, and  the augmentation homomorphism $\varepsilon $ forms the standard free resolution
\begin{equation}\label{standard-resolution}
\cdots \stackrel{\partial_{n+2}}{\longrightarrow} \Ba_{n+1}(G)
    \stackrel{\partial_{n+1}}{\longrightarrow} \Ba_n(G) \stackrel{\partial_{n}}{\longrightarrow} \cdots
       \longrightarrow \Ba_1(G) \stackrel{\partial_{1}}{\longrightarrow} \Ba_0(G) \stackrel{\varepsilon}{\longrightarrow} \mathbb{Z}
\end{equation}
of the trivial $G$-module $\mathbb{Z}$.
\medskip

If $A$ is a right  $G$-module, then the \textit{classical homology groups} $\Ha_n(G,A)$, $n \geq 0$, are defined as homology groups of the chain  complex
\begin{equation}\label{eqno2}
\cdots \stackrel{\partial_{n+2}}{\longrightarrow} A \underrel{\otimes}{G} \Ba_{n+1}(G)
    \stackrel{\partial_{n+1}}{\longrightarrow} A \underrel{\otimes}{G} \Ba_n(G) \stackrel{\partial_{n}}{\longrightarrow} \cdots
               \stackrel{\partial_{2}}{\longrightarrow} A \underrel{\otimes}{G} \Ba_1(G)
                     \stackrel{\partial_{1}}{\longrightarrow} A \underrel{\otimes}{G} \Ba_0(G) \stackrel{\partial_{0}}{\longrightarrow} 0,
\end{equation}
where the boundary map $\partial_n$ is induced by the boundary map of \eqref{resolution-differential}.
\par

If $A$ is a left  $G$-module, then the \textit{classical cohomology groups} $\Ha^n(G,A)$, $n \geq 0$, are defined as the cohomology groups of the cochain  complex
\begin{equation}\label{eqno3}
0 \longrightarrow \Hom_G \big(\Ba_0(G), A\big)\stackrel{\delta^{0}}{\longrightarrow} \Hom_G \big(\Ba_1(G), A\big)
\stackrel{\delta^{1}}{\longrightarrow} \cdots    \stackrel{\delta^{n-1}}{\longrightarrow} \Hom_G \big(\Ba_n(G), A\big)
            \stackrel{\delta^{n}}{\longrightarrow}  \cdots,
\end{equation}
where the coboundary map  $\delta^n$ is induced by the boundary map  of \eqref{resolution-differential}.
\medskip

\subsection{Another complex for classical cohomology}\label{another-complex}
The classical group cohomology can also be obtained using another cochain complex which we describe next. Set $\Ca^{-1}(G, A)=0$, and $\Ca^0(G,A)=A$ viewed as maps from the trivial group to the group $A$. For each integer $n \geq 1$, let
\begin{equation}\label{function-cochain-ord}
\Ca^n(G, A)=\big\{\sigma: G^n \to A \big\}.
\end{equation}
Then the coboundary map (same notation being used) $$\delta^n:\Ca^n(G,A) \longrightarrow \Ca^{n+1}(G,A)$$ given by
\begin{eqnarray}\label{cocycle-eqn}
& &\delta^n(\sigma)(g_1,\,\dots\,,\,g_{n+1})\\ 
\nonumber &=& g_1\sigma(g_2,\,\dots,\,g_{n+1}) +\sum_{k=1}^n (-1)^k \sigma(g_1,\,\dots,\,g_k g_{k+1},\,\dots,\,g_{n+1})
+(-1)^{n+1}\sigma(g_1,\,\dots,\,g_n)
\end{eqnarray}
for $\sigma \in \Ca^n(G,A)$ and $(g_1,\,\dots,\,g_{n+1}) \in G^{n+1}$, turns $\{\Ca^*(G,A), \delta^*\}$ into a cochain complex. Observe that, for each $n \ge 0$,
\begin{eqnarray*}
\Hom_G\big(\Ba_n(G), A\big) &=& \Hom_G \big(\mathbb{Z}[G^{n+1}], A\big)\\
&=& \big\{f: G^{n+1} \to A~|~ f(gg_0, gg_1, \ldots, g g_n)=gf(g_0, g_1, \ldots, g_n) \big\}.
\end{eqnarray*}
Then the map
\begin{equation}\label{iso-two-complexes}
\psi^n:\Hom_G\big(\Ba_n(G), A\big) \longrightarrow \Ca^n(G,A)
\end{equation}
defined by
$$\psi^n(f)(g_1,\ldots , g_n)=f(1,g_1,g_1g_2, \ldots ,g_1g_2\ldots g_n)$$
induces an isomorphism of cochain complexes $$\psi^*:\Hom_G\big(\Ba_*(G), A\big) \longrightarrow \Ca^*(G,A).$$ Thus, the cochain complex $\{\Ca^*(G,A), \delta^*\}$ also gives the classical group cohomology defined earlier. This fact will be used in Section \ref{sec8} where we define (co)restriction homomorphisms for exterior and symmetric cohomologies.
\medskip

\section{Symmetric (co)homology}\label{sec3}

\subsection{Symmetric cohomology}\label{sym-cohom}

We recall the definition of symmetric cohomology originally introduced by Staic \cite{Staic1}. Let $G$ be a group and $A$ a $G$-module. Let $\Ca^n(G, A) = \{ \sigma : G^n \to A \}$ and $\delta^n : \Ca^n(G, A) \longrightarrow \Ca^{n+1}(G, A)$ be as in \eqref{cocycle-eqn}. Define $d^j : \Ca^n(G, A) \longrightarrow \Ca^{n+1}(G, A)$ by
\begin{eqnarray*}
d^0 (\sigma) (g_1, \ldots, g_{n+1}) &=& g_1 \sigma(g_2, \ldots, g_{n+1}),\\
d^j (\sigma) (g_1, \ldots, g_{n+1}) &=&  \sigma(g_1, \ldots, g_j g_{j+1}, \ldots, g_{n+1})~ \textrm{for}~1 \leq j \leq n,\\
d^{n+1} (\sigma) (g_1, \ldots, g_{n+1}) &=& \sigma(g_1, \ldots, g_{n+1}).
\end{eqnarray*}
Then we notice that
$$
\delta^n = \sum_{j=0}^{n+1} (-1)^j d^j.
$$
Staic constructed an action of the symmetric group $\Sigma_{n+1}$ on $\Ca^n(G, A)$ which is compatible with the coboundary maps $\delta^n$. If $\tau_{i}$ denote the transposition $(i,i+1)$ for $1 \leq i \leq n$ and $\sigma \in \Ca^n(G, A)$, then
\begin{eqnarray*}
(\tau_1 \sigma) (g_1, g_2, g_3, \ldots, g_{n})  &=& -g_1 \sigma(g_1^{-1}, g_1 g_2, g_3, \ldots, g_{n+1}),\\
(\tau_i \sigma) (g_1, g_2, g_3, \ldots, g_{n})  &=&  -\sigma(g_1, \ldots, g_{i-2}, g_{i-1} g_i, g_i^{-1}, \ldots, g_{n})~\textrm{for} ~1 < i < n,\\
(\tau_n \sigma) (g_1, g_2, g_3, \ldots, g_{n})  &=& -\sigma(g_1, g_2, g_3, \ldots, g_{n-1} g_{n} g_n^{-1}).
\end{eqnarray*}

The cohomology  $\Has^n(G,A)$ of the subcomplex of  invariants $\Cas^n (G, A):= \Ca^n (G, A)^{\Sigma_{n+1}}$ is called the \textit{symmetric cohomology} of $G$ with coefficients in $A$. By \cite[Lemma 3.1]{Staic2}, the map
$$
\Has^2(G, A) \to \Ha^2(G, A)
$$
induced by the inclusion $\Cas^n (G, A) \hookrightarrow \Ca^n (G, A)$ of cochain complexes is injective.
\medskip

\begin{example}
A very few examples of computations of symmetric cohomology are known. By \cite[Remark 5.4]{Staic1}, $\Has^{2k}(\mathbb{Z}_2, \mathbb{Z})=0$,  $\Has^{2}(\mathbb{Z}_2, \mathbb{Z}_n)=0$ and  $\Has^{2}(\mathbb{Z}_4, \mathbb{Z})=\mathbb{Z}_2$. Further, it is known due to \cite[Lemma 3.10]{Pirashvili} that
$$
\Has^{k}(\mathbb{Z}_2, \mathbb{Z}_2)=
\left\{
\begin{array}{ll}
\mathbb{Z}_2 & \mbox{if}\,\, k=0~\textrm{or}~k \equiv 1 \mod 4,\\
0 & \mbox{otherwise}.
\end{array}
\right.
$$
\end{example}


\subsection{Alternate definition of symmetric cohomology}
We recall an alternate approach to symmetric cohomology by Pirashvili \cite{Pirashvili} which shows that the symmetric cohomology of Staic can be defined in a more natural way using the standard resolution.

Let $G$ be a group and 
$$
\bold{T}_n(G) := \mathbb{Z}[G]^{\otimes (n+1)}
$$
the $G$-module generated by the set
$$
\big\{ g_0 \otimes g_1 \otimes \cdots \otimes g_n~|~g_i \in G \big\}.
$$
Then the symmetric group $\Sigma_{n+1}$ acts on $\bold{T}_n(G)$ with the action defined on the generators by
$$
\tau_j (g_0 \otimes g_1 \otimes \cdots \otimes g_j \otimes g_{j+1}\otimes \cdots \otimes g_{n}) = g_0 \otimes g_1 \otimes \cdots \otimes g_{j+1} \otimes g_{j}\otimes \cdots \otimes g_{n},
$$
where $\tau_j = (j, j+1)$ for $0 \le j \le n-1$. We also have  homomorphisms $$d_i : \bold{T}_n(G) \longrightarrow \bold{T}_{n-1}(G)$$ defined on generators by
$$
d_i (g_0 \otimes g_1 \otimes \cdots \otimes g_i \otimes \cdots \otimes g_{n}) = g_0 \otimes g_1 \otimes \cdots \otimes \widehat{g_i} \otimes \cdots \otimes g_{n}.
$$
It is not difficult to see that setting $$\partial_n := \sum_{i=0}^n (-1)^i d_i$$ gives a chain complex $\big\{\bold{T}_*(G), \partial_* \big\}$. If $A$ is a left  $G$-module, then applying the functor $\Hom_G(-, A)$  gives the cochain complex $\{\Ka^* (G, A), \delta^* \}$, where $\Ka^* (G, A):= \Hom_G \big(\bold{T}_*(G), A\big)$ and $\delta^*$ is the induced coboundary map. The action of  $\Sigma_{n+1}$ on $\bold{T}_n(G)$ induces an action on $ \Ka^n (G, A)$ given by $$\tau_i f(g_0 \otimes g_1\otimes \cdots \otimes g_i \otimes g_{i+1} \otimes \cdots \otimes g_n)= f(g_0 \otimes g_1\otimes \cdots \otimes g_{i+1} \otimes g_i \otimes \cdots \otimes g_n).$$
\par

A function $\sigma \in \Ka^n(G, A)$ is called  {\it skew-symmetric} if
$$
\tau_i \sigma= - \sigma
$$
for each $0 \le i \le   n-1$. Let $\KS^n(G,A)$ be the $G$-submodule of $\Ka^{n}(G,A)$ consisting of skew-symmetric functions.  
Since the coboundary map $\delta^n$ keeps $\KS^n(G,A)$ invariant, we obtain a cochain complex  $\{\KS^*(G, A), \delta^*\}$. By \cite[Lemma 3.5]{Pirashvili}, the cochain complex $\{\KS^*(G, A), \delta^*\}$ is isomorphic to Staic's cochain complex defining symmetric cohomology.

\begin{lemma} \label{staic-alternate}
The $n$-th cohomology group  of the cochain complex   $\{\KS^*(G,A), \delta^* \}$ is isomorphic to the $n$-th symmetric cohomology $\Has^n(G, A)$.
\end{lemma}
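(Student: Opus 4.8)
The plan is to read off the lemma from the identification of cochain complexes already recorded just above its statement: by \cite[Lemma 3.5]{Pirashvili}, the skew-symmetric complex $\{\KS^*(G,A), \delta^*\}$ is isomorphic to Staic's complex $\{\Cas^*(G,A), \delta^*\}$. Since $\Has^n(G,A)$ is \emph{defined} as the $n$-th cohomology of the latter, and cohomology carries an isomorphism of cochain complexes to an isomorphism of graded groups, the claimed isomorphism $\Ha^n\big(\KS^*(G,A), \delta^*\big) \cong \Has^n(G,A)$ follows at once. Thus the entire content of the statement is packaged in that complex isomorphism, and a self-contained proof would consist in exhibiting it explicitly.

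To make the isomorphism concrete, I would first note that $\bold{T}_n(G) = \mathbb{Z}[G]^{\otimes (n+1)}$, equipped with the diagonal $G$-action and the face maps $d_i$, is canonically isomorphic as a complex of $G$-modules to the standard resolution $\Ba_*(G)$ of \secref{sec-standard-resol} via $g_0 \otimes \cdots \otimes g_n \mapsto (g_0, \ldots, g_n)$. Applying $\Hom_G(-, A)$ and composing with the isomorphism $\psi^*$ of \eqnref{iso-two-complexes} then produces an isomorphism of cochain complexes $\Ka^*(G,A) \cong \Ca^*(G,A)$ sending $f$ to the inhomogeneous cochain $\bar f(g_1, \ldots, g_n) = f\big(1 \otimes g_1 \otimes g_1 g_2 \otimes \cdots \otimes g_1 g_2 \cdots g_n\big)$. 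The crux is then to verify that this isomorphism intertwines the two $\Sigma_{n+1}$-actions: the naive slot-permuting action of $\tau_i$ on $\Ka^n(G,A)$ is carried, after passing to the inhomogeneous coordinates $h_i = g_{i-1}^{-1} g_i$, to Staic's \emph{signed} transposition on $\Ca^n(G,A)$. Granting this, the subspace $\KS^n(G,A)$ on which each $\tau_i$ acts by $-1$ maps isomorphically onto the invariant subspace $\Cas^n(G,A) = \Ca^n(G,A)^{\Sigma_{n+1}}$, and since the identification commutes with $\delta^*$ it restricts to an isomorphism of the subcomplexes.

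The main obstacle is exactly this sign-and-coordinate bookkeeping. The change of variables $g_i \mapsto g_{i-1}^{-1} g_i$ is what converts the clean transposition $g_i \leftrightarrow g_{i+1}$ of tensor factors into the twisted substitutions $\sigma(g_1^{-1}, g_1 g_2, \ldots)$ and $\sigma(\ldots, g_{i-1} g_i, g_i^{-1}, \ldots)$ appearing in Staic's formulas, and one must check that the extra minus sign built into Staic's action is precisely the sign that turns the skew-symmetry relation $\tau_i \sigma = -\sigma$ into the invariance relation $\tau_i \sigma = \sigma$ (with a matching reindexing of the generators, the tensor indices running over $0 \le j \le n-1$ against Staic's $1 \le i \le n$). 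Everything else — that the $d_i$ agree with the boundary maps of $\Ba_*(G)$, that $\psi^*$ is a cochain map, and that cohomology is functorial under isomorphisms of complexes — is routine or already supplied by the excerpt.
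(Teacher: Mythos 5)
Your proposal is correct and follows exactly the route the paper itself takes: the lemma is deduced directly from the isomorphism of cochain complexes $\{\KS^*(G,A),\delta^*\} \cong \{\Cas^*(G,A),\delta^*\}$ supplied by \cite[Lemma 3.5]{Pirashvili}, together with the fact that isomorphic complexes have isomorphic cohomology. The additional sketch you give of how that isomorphism is exhibited (via $\psi^*$ and the sign bookkeeping for the $\Sigma_{n+1}$-actions) goes beyond what the paper records, which offers no proof beyond the citation.
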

\medskip


\subsection{Symmetric homology}\label{sec-symm-homo}
We conclude this section by introducing symmetric homology of groups. For a group  $G$, we have $\mathbb{Z}[G^n] \cong \mathbb{Z}[G]^{\otimes n}$ as $G$-modules. Recall the standard free resolution 
$$
\cdots \stackrel{\partial_{n+2}}{\longrightarrow} \Ba_{n+1}(G)
    \stackrel{\partial_{n+1}}{\longrightarrow} \Ba_{n}(G) \stackrel{\partial_{n}}{\longrightarrow} \cdots
       \longrightarrow \Ba_{1}(G) \stackrel{\partial_{1}}{\longrightarrow} \Ba_{0}(G) \stackrel{\varepsilon}{\longrightarrow} \mathbb{Z},
$$
of the trivial $G$-module $\mathbb{Z}$, where $\Ba_n(G)=\mathbb{Z}[G]^{\otimes (n+1)}$ for  $n \ge 0$. Consider the $G$-submodule  $\Bas_n(G)$  of $\Ba_n(G)$ that is generated by all alternative sums of the form
$$
\sum\limits_{\sigma\in \Sigma_{n+1}} \mathrm{sign}(\sigma) ~\big(g_{\sigma(0)}\otimes \cdots \otimes g_{\sigma(n)} \big),
$$
where  $g_0,\ldots, g_n \in G$. By \cite[Lemma 3.2]{Zarelua}, the left $G$-modules  $\Bas_n(G)$, $n \ge 0$, with the induced boundary maps forms a chain complex
\begin{equation}\label{symmetric-homology-resolution}
\cdots \stackrel{\partial_{n+2}}{\longrightarrow} \Bas_{n+1}(G)
    \stackrel{\partial_{n+1}}{\longrightarrow} \Bas_{n}(G) \stackrel{\partial_{n}}{\longrightarrow} \cdots
       \longrightarrow \Bas_{1}(G) \stackrel{\partial_{1}}{\longrightarrow} \Bas_{0}(G) \stackrel{\varepsilon}{\longrightarrow} \mathbb{Z}.
\end{equation}
\par

If $A$ is a right   $G$-module, then the \textit{symmetric homology groups} $\Has_n(G,A)$, for $n \geq 0$, are defined as homology groups of the chain complex
\begin{equation}\label{symmetric-homology-chain-complex}
\ldots \stackrel{\partial_{n+2}}{\longrightarrow} A \underrel{\otimes}{G} \Bas_{n+1}(G)
    \stackrel{\partial_{n+1}}{\longrightarrow} A \underrel{\otimes}{G} \Bas_{n}(G) \stackrel{\partial_{n}}{\longrightarrow} \cdots
               \stackrel{\partial_{2}}{\longrightarrow} A \underrel{\otimes}{G} \Bas_{1}(G)
                     \stackrel{\partial_{1}}{\longrightarrow} A \underrel{\otimes}{G} \Bas_{0}(G) \longrightarrow 0,
\end{equation}
where  $\partial_n$ is the induced boundary map. 
\medskip

\section{Exterior (co)homology}\label{sec4}

\subsection{Exterior (co)homology} We recall the definition of exterior (co)homology introduced by Zarelua \cite{Zarelua}. If $V$ is a left $R$-module, where $R$ is a ring (not necessary commutative), then the exterior algebra  $\Lambda^*(V)$ is defined as a quotient of the tensor algebra  $\Ta^*(V)$ by the ideal generated by the elements
$$
v_1\otimes \cdots \otimes v_i \otimes \cdots \otimes v_j \otimes \cdots \otimes v_n,
$$
where  $v_k\in V$, $v_i = v_j$ for $i\not= j$ and $n\geq 2$.  The ring $R$ acts on  $\Ta^*(V)$ diagonally as
$$
x (v_1\otimes \cdots  \otimes v_n)=xv_1\otimes \cdots  \otimes xv_n,
$$
where $x \in R$ and  $v_1, \dots ,v_n \in V$. This turns $\Lambda^*(V)$ into a left $R$-module. If $V$ is a free  $R$-module with a basis $\{e_i\}_{i\in I}$,  where $I$ is linearly ordered,  then  $\Lambda^{n}(V)$
is a free  $R$-module with the basis
$$
\big\{e_{i_1}\wedge \cdots  \wedge e_{i_n} ~\mid ~ i_1 < \cdots < i_n \big\}.
$$
It is evident that if  $I$ is finite, then  $\Lambda^{n}(V)=0$ for  $n > |I|$.
\par

The standard projective resolution \eqref{standard-resolution} of the trivial $G$-module $\mathbb{Z}$ can be rewritten as
$$\cdots \longrightarrow  \bold{T}_n(G)  \stackrel{\partial_n}{\longrightarrow} \bold{T}_{n-1}(G)  \stackrel{\partial_{n-1}}{\longrightarrow}  \cdots  \stackrel{\partial_2}{\longrightarrow} \bold{T}_1(G) \stackrel{\partial_1}{\longrightarrow} \Z[G]  \stackrel{\varepsilon}{\longrightarrow} \mathbb{Z}.$$
We set $\bold\Lambda_n(G):= \Lambda^{n+1}\big(\mathbb{Z}[G]\big)$. If we replace the tensor algebra by the exterior algebra, then we again obtain a resolution which, in general, is not projective. It is easy to prove the following result \cite[Lemma 3.1]{Zarelua}.

\begin{lemma}
The boundary map $\partial_n:\bold{T}_n(G) \longrightarrow \bold{T}_{n-1}(G)$ induces a boundary map (with the same notation) $$\partial_n: \bold\Lambda_n(G) \longrightarrow \bold\Lambda_{n-1}(G)$$ given by
$$
\partial_n(g_0\wedge g_1\wedge \cdots \wedge g_n) =
 \sum\limits_{i=0}^{n}(-1)^i \big(g_0\wedge \cdots\wedge \widehat{g_i}\wedge \cdots \wedge g_n \big).
$$
\end{lemma}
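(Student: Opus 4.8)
The plan is to show that the boundary map $\partial_n$ on $\bold{T}_n(G)=\mathbb{Z}[G]^{\otimes(n+1)}$ passes to the exterior quotient $\bold\Lambda_n(G)=\Lambda^{n+1}\big(\mathbb{Z}[G]\big)$; once this is established, the displayed formula is automatic. Write $\pi_n:\bold{T}_n(G)\to\bold\Lambda_n(G)$ for the canonical projection and let $R_n=\Ker(\pi_n)$ be the defining relation submodule, so that $\bold\Lambda_n(G)=\bold{T}_n(G)/R_n$. Since the $G$-action is diagonal, a tensor with two equal adjacent factors is sent by any $x\in G$ to a tensor of the same type, so $R_n$ is a $G$-submodule; and by the standard presentation of the exterior algebra it is generated, as an abelian group, by the pure tensors $g_0\otimes\cdots\otimes g_n$ with $g_k=g_{k+1}$ for some $0\le k\le n-1$ (any non-adjacent repetition reduces to these via anticommutativity). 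To obtain an induced map $\partial_n:\bold\Lambda_n(G)\to\bold\Lambda_{n-1}(G)$ it therefore suffices to verify the single inclusion $\partial_n(R_n)\subseteq R_{n-1}$.

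The crux is this verification. Fix a generator $w=g_0\otimes\cdots\otimes g_n$ of $R_n$ with $g_k=g_{k+1}$ and expand $\partial_n(w)=\sum_{i=0}^n(-1)^i d_i(w)$, where $d_i$ deletes the $i$-th factor. For every index $i\notin\{k,k+1\}$ the tensor $d_i(w)$ still contains the adjacent equal pair coming from $g_k=g_{k+1}$ (merely shifted by one position when $i<k$), so $d_i(w)\in R_{n-1}$. The two remaining terms are controlled by the repetition itself: because $g_k=g_{k+1}$, deleting the $k$-th factor and deleting the $(k+1)$-th factor produce literally the same tensor, $d_k(w)=d_{k+1}(w)$. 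Their contributions carry opposite signs $(-1)^k$ and $(-1)^{k+1}$, whence $(-1)^k d_k(w)+(-1)^{k+1}d_{k+1}(w)=0$. Summing, $\partial_n(w)$ is a sum of elements of $R_{n-1}$, which is exactly $\partial_n(R_n)\subseteq R_{n-1}$.

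With this inclusion in hand, $\partial_n$ descends to a well-defined $G$-module homomorphism $\partial_n:\bold\Lambda_n(G)\to\bold\Lambda_{n-1}(G)$ satisfying $\pi_{n-1}\,\partial_n=\partial_n\,\pi_n$. Evaluating on $\pi_n(g_0\otimes\cdots\otimes g_n)=g_0\wedge\cdots\wedge g_n$ and using that $\pi_{n-1}$ turns each face $d_i$ into omission of $g_i$ from the wedge yields precisely $\partial_n(g_0\wedge\cdots\wedge g_n)=\sum_{i=0}^n(-1)^i\big(g_0\wedge\cdots\wedge\widehat{g_i}\wedge\cdots\wedge g_n\big)$. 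The relation $\partial_{n-1}\partial_n=0$ is inherited from $\bold{T}_*(G)$ by applying $\pi$, so the modules $\bold\Lambda_*(G)$ indeed form a complex. I expect no deep obstacle: the only point demanding care is well-definedness, and within it the exact cancellation of the two collapsing face terms. Everything else is routine sign bookkeeping, the one place where the structure of the exterior algebra must genuinely be invoked being the reduction of arbitrary repetitions to adjacent ones.
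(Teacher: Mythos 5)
Your overall strategy---show that $\partial_n$ maps the relation submodule $R_n=\Ker\big(\bold{T}_n(G)\to\bold\Lambda_n(G)\big)$ into $R_{n-1}$ and then read off the formula from the projections---is the right one, and your computation on the generators you do consider is correct. The gap is in your description of $R_n$: it is \emph{not} generated as an abelian group by the pure tensors $g_0\otimes\cdots\otimes g_n$ of group elements with an adjacent repetition. The defining ideal is generated by tensors $v_0\otimes\cdots\otimes v_n$ with $v_i=v_j$ for \emph{arbitrary} elements $v_k\in\mathbb{Z}[G]$, not only basis elements, and the squares of non-basis elements contribute polarization relations that are not in the span you describe. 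Concretely, for $G=\{1,g\}$ of order $2$ and $n=1$, the group $\bold{T}_1(G)=\mathbb{Z}[G]^{\otimes 2}$ is free abelian of rank $4$ while $\bold\Lambda_1(G)=\Lambda^{2}(\mathbb{Z}[G])$ is free of rank $1$, so $R_1$ has rank $3$; yet the only pure tensors of group elements with a repeated entry are $1\otimes 1$ and $g\otimes g$, spanning a rank-$2$ subgroup. The missing generator is $1\otimes g+g\otimes 1=(1+g)\otimes(1+g)-1\otimes 1-g\otimes g$, and the same rank count fails over $\mathbb{Q}$, so this is not a characteristic-$2$ artifact. Your parenthetical reduction of non-adjacent repetitions to adjacent ones "via anticommutativity" already presupposes exactly these polarization relations, so it cannot be used to dispense with them.

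Consequently your verification of $\partial_n(R_n)\subseteq R_{n-1}$ is carried out only on a proper subset of a generating set, and the key step of the lemma is not yet established. The argument is fixable: a correct generating set for $R_n$ consists of your adjacent-repeat pure tensors together with the elements $w+\tau_j w$, where $w=g_0\otimes\cdots\otimes g_n$ is a pure tensor and $\tau_j$ transposes the factors in positions $j$ and $j+1$. For these extra generators one checks that if $i\notin\{j,j+1\}$ then $d_i(w+\tau_j w)=d_i w+\tau_{j'}(d_i w)$ for a suitable adjacent transposition $\tau_{j'}$, which is again a generator of $R_{n-1}$, while the two remaining faces satisfy $d_j\tau_j w=d_{j+1}w$ and $d_{j+1}\tau_j w=d_j w$, so that $(-1)^{j}d_j(w+\tau_j w)+(-1)^{j+1}d_{j+1}(w+\tau_j w)=0$. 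Adding this case closes the proof. (For what it is worth, the paper does not prove this lemma itself but quotes it from Zarelua, so there is no in-text argument to compare against.)
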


Consider the resolution
\begin{equation}\label{exterior-resolution}
\cdots \stackrel{\partial_{n+1}}{\longrightarrow} \bold\Lambda_n(G)
   \stackrel{\partial_{n}}{\longrightarrow} \bold\Lambda_{n-1}(G)
     \stackrel{\partial_{n-1}}{\longrightarrow}   \cdots \stackrel{\partial_{1}}{\longrightarrow}
       \bold\Lambda_0(G) \stackrel{\varepsilon}{\longrightarrow} \mathbb{Z}\longrightarrow 0.
\end{equation}
Tensoring the complex by a  right $G$-module $A$ gives the chain complex

\begin{equation}
\cdots \stackrel{\partial_{n+1}}{\longrightarrow} A \underrel{\otimes}{G} \bold\Lambda_n(G)
\stackrel{\partial_{n}}{\longrightarrow} A\underrel{\otimes}{G} \bold\Lambda_{n-1}(G)
  \stackrel{\partial_{n-1}}{\longrightarrow}   \cdots
  \stackrel{\partial_{1}}{\longrightarrow} A\underrel{\otimes}{G}\bold\Lambda_0(G)
  \stackrel{\partial_{0}}{\longrightarrow}  0.
\end{equation}
The homology groups  $\Ha_*^{\lambda}(G,A)$ of the preceding chain complex are called the \textit{exterior homology groups} of $G$ with coefficients in $A$. If $A$ is a left $G$-module, then applying the  functor
$\Hom_G(-, A)$ on the resolution \eqref{exterior-resolution} yields the cochain complex
\begin{equation}
0\longrightarrow \Hom_G\big( \bold\Lambda_0(G), A\big)
\stackrel{\delta^{0}}{\longrightarrow}  \cdots
  \stackrel{\delta^{n-2}}{\longrightarrow}     \Hom_G\big( \bold\Lambda_{n-1}(G), A\big)
  \stackrel{\delta^{n-1}}{\longrightarrow}  \Hom_G\big( \bold\Lambda_n(G), A\big)  \stackrel{\delta^{n}}{\longrightarrow}  \cdots,
\end{equation}
whose  cohomology groups  $\Ha_{\lambda}^*(G,A)$ are called the \textit{exterior cohomology groups} of $G$ with coefficients in $A$.

\begin{example}
Not many examples of computations of exterior (co)homology are known due to lack of sufficient theory. By \cite[p. 414]{Pirashvili}, if $p$ is a prime, then
$$
\Ha_{\lambda}^k(\mathbb{Z}_p,A)=
\left\{
\begin{array}{ll}
\Ha^k(\mathbb{Z}_p,A)  & \mbox{if}\,\, k \le p-1, \\
0 & \mbox{if}\,\, k \ge p. \\
\end{array}
\right.
$$
\end{example}

\medskip

\subsection{Map from symmetric to exterior homology}
Let $G$ be a group, $\Ba_n(G)= \mathbb{Z}[G]^{\otimes(n+1)}$ and $\bold\Lambda_n(G)= \Lambda^{n+1}(\mathbb{Z}[G])$ as $G$-modules. For $g_0, g_1, \ldots, g_n \in G$, set
$$\mu_n(g_0 \otimes g_1 \otimes \cdots \otimes g_n) := \sum_{\sigma \in \Sigma_{n+1}} \rm{sign} \big(\sigma) \big(g_{\sigma(0)}\otimes g_{\sigma(1)} \otimes  \cdots \otimes g_{\sigma(n)} \big).$$
Let $\Bas_n(G)$ be the $G$-submodule of $\Ba_n(G)$ as in Subsection \ref{sec-symm-homo}. The standard boundary maps \eqref{resolution-differential} induce boundary maps  on the subcomplexes  $\Bas_*(G)$ and $\bold\Lambda_*(G)$. By \cite[Lemma 3.3]{Pirashvili}, $\{\Ba_*(G), \partial_*\}$ and $\{\Lambda_*(G), \partial_*\}$ are resolutions of the trivial $G$-module $\mathbb{Z}$. On the other hand, $\{\Bas_*(G), \partial_*\}$ is not a resolution.
\par

Let $\lambda_n: \Ba_n(G) \longrightarrow \bold\Lambda_n(G)$ be the natural projection given by $$\lambda_n (g_0 \otimes g_1 \otimes \cdots \otimes g_n)= g_0 \wedge g_1 \wedge \cdots \wedge g_n.$$ The universal property of exterior product yields a map $\nu_n: \bold\Lambda_n(G) \longrightarrow \Bas_n(G)$ given by $$\nu_n(g_0 \wedge g_1 \wedge \cdots \wedge g_n)= \mu_n(g_0 \otimes g_1 \otimes \cdots \otimes g_n).$$ This gives the commutative diagram
$$
\xymatrix{
\Ba_n(G)  \ar[r]^{\lambda_n} \ar[d]^{\mu_n} & \bold\Lambda_n(G) \ar[dl]^{\nu_n}\\
 \Bas_n(G). &}
$$

The following identities are easy to check and will be used in computations of symmetric homology in Section \ref{sec6}.

\begin{lemma}\label{identities}
 The following holds for each $n \ge 1$:
\begin{enumerate}
\item $\lambda_n \nu_n=(n+1)!$.
\item $\lambda_{n-1} \partial_n = \partial_n \lambda_n$.
\item $\partial_n \mu_n= (n+1) \mu_{n-1} \partial_n$.
\item $\partial_n \nu_n= (n+1) \nu_{n-1} \partial_n$.
\end{enumerate}
\end{lemma}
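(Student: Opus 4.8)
The plan is to verify each identity on the additive generators $g_0\otimes\cdots\otimes g_n$ (respectively $g_0\wedge\cdots\wedge g_n$), since all four maps are $\mathbb{Z}$-linear (indeed $G$-linear). Identities (1) and (2) are immediate from the definitions, identity (3) carries the real combinatorial content, and (4) is then essentially a restatement of (3) transported along $\nu_n$.

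First, for (1) I would apply $\lambda_n\nu_n$ to a generator $g_0\wedge\cdots\wedge g_n$. By definition $\nu_n(g_0\wedge\cdots\wedge g_n)=\mu_n(g_0\otimes\cdots\otimes g_n)=\sum_{\sigma\in\Sigma_{n+1}}\mathrm{sign}(\sigma)\,g_{\sigma(0)}\otimes\cdots\otimes g_{\sigma(n)}$; applying $\lambda_n$ and using $g_{\sigma(0)}\wedge\cdots\wedge g_{\sigma(n)}=\mathrm{sign}(\sigma)\,(g_0\wedge\cdots\wedge g_n)$ in the exterior algebra turns each summand's coefficient into $\mathrm{sign}(\sigma)^2=1$, so the sum collapses to $(n+1)!\,(g_0\wedge\cdots\wedge g_n)$. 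For (2), applying both sides to $g_0\otimes\cdots\otimes g_n$ yields $\sum_i(-1)^i\,g_0\wedge\cdots\widehat{g_i}\cdots\wedge g_n$ on the nose, since $\lambda$ merely replaces $\otimes$ by $\wedge$ and commutes with deletion of factors; thus $\lambda$ is a chain map.

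The heart of the argument is (3). Writing $\partial_n=\sum_{i=0}^n(-1)^i d_i$ with $d_i$ the $i$-th face (deletion) map, I would expand
\[
\partial_n\mu_n(g_0\otimes\cdots\otimes g_n)=\sum_{\tau\in\Sigma_{n+1}}\sum_{i=0}^n \mathrm{sign}(\tau)\,(-1)^i\,d_i\big(g_{\tau(0)}\otimes\cdots\otimes g_{\tau(n)}\big).
\]
The key tool is the commutation law between faces and the symmetric action: for $\tau\in\Sigma_{n+1}$ and $k=\tau(i)$ there is an induced permutation $\tau'\in\Sigma_n$ with $d_i(\tau\cdot\mathbf{g})=\tau'\cdot(d_k\mathbf{g})$ and $\mathrm{sign}(\tau')=(-1)^{i+k}\mathrm{sign}(\tau)$, whence $\mathrm{sign}(\tau)(-1)^i=(-1)^k\mathrm{sign}(\tau')$. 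Substituting, each summand becomes $(-1)^k\,\mathrm{sign}(\tau')\,\tau'\cdot(d_k\mathbf{g})$. I would then regroup the double sum by the \emph{deleted index} $k=\tau(i)$: for each fixed $k$ and each fixed position $i$, as $\tau$ ranges over the $n!$ permutations with $\tau(i)=k$ the induced $\tau'$ ranges bijectively over all of $\Sigma_n$, so $\sum_{\tau:\tau(i)=k}\mathrm{sign}(\tau')\,\tau'\cdot(d_k\mathbf{g})=\mu_{n-1}(d_k\mathbf{g})$. This is independent of $i$, and there are $n+1$ admissible positions $i$, so summing over $i$ produces the factor $(n+1)$ while summing over $k$ with signs $(-1)^k$ reassembles $\partial_n$:
\[
\partial_n\mu_n(\mathbf{g})=(n+1)\sum_{k=0}^n(-1)^k\mu_{n-1}(d_k\mathbf{g})=(n+1)\,\mu_{n-1}\,\partial_n(\mathbf{g}).
\]
Finally, (4) follows from (3): evaluating $\partial_n\nu_n$ and $(n+1)\nu_{n-1}\partial_n$ on $g_0\wedge\cdots\wedge g_n$ and using $\nu_m(\cdots\wedge\cdots)=\mu_m(\cdots\otimes\cdots)$ reduces each side to the corresponding side of (3) applied to $g_0\otimes\cdots\otimes g_n$. (Alternatively, (3) can be deduced from (2) and (4) via $\mu_m=\nu_m\lambda_m$.)

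I expect the sign bookkeeping in (3) to be the main obstacle: both the verification of the commutation law $\mathrm{sign}(\tau')=(-1)^{i+\tau(i)}\mathrm{sign}(\tau)$, most safely checked by factoring $\tau$ through the cycles carrying $i$ and $k$ to the last slot, and the careful confirmation that the regrouping is a genuine bijection $\{\tau:\tau(i)=k\}\to\Sigma_n$, so that the count yields exactly the factor $(n+1)$ with no term double-counted.
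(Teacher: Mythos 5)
Your proof is correct: the paper states these identities without proof (``easy to check''), and your direct verification on generators --- with the cofactor sign law $\mathrm{sign}(\tau')=(-1)^{i+\tau(i)}\mathrm{sign}(\tau)$ and the reparametrization of pairs $(\tau,i)$ by triples $(k,i,\tau')$ supplying the factor $(n+1)$ in (3) --- is exactly the intended argument. Deducing (4) from (3) via $\nu_m(g_0\wedge\cdots\wedge g_m)=\mu_m(g_0\otimes\cdots\otimes g_m)$ is also sound, since both sides of (4) are already well-defined on $\bold\Lambda_n(G)$ and so may be checked on wedge generators.
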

\par

\begin{prop}
There exists a group homomorphism $\Has_*(G, A) \longrightarrow \Ha_*^\lambda(G,A).$
\end{prop}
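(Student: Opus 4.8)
The plan is to produce the homomorphism as the map induced on homology by the projection $\lambda$, restricted to the symmetric subcomplex. Since $\Bas_n(G)$ is a $G$-submodule of $\Ba_n(G)$, the $G$-equivariant projection $\lambda_n : \Ba_n(G) \to \bold\Lambda_n(G)$ restricts to a $G$-module homomorphism $\lambda_n : \Bas_n(G) \to \bold\Lambda_n(G)$. On a generator it sends $\mu_n(g_0 \otimes \cdots \otimes g_n)$ to $(n+1)!\,(g_0 \wedge \cdots \wedge g_n)$, which lies in $\bold\Lambda_n(G)$, so the restriction is well defined.

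Next I would verify that these restricted maps assemble into a chain map $\lambda_* : \{\Bas_*(G), \partial_*\} \to \{\bold\Lambda_*(G), \partial_*\}$. This is immediate from part (2) of Lemma \ref{identities}, the identity $\lambda_{n-1}\partial_n = \partial_n \lambda_n$, which holds on all of $\Ba_n(G)$ and hence on the subcomplex $\Bas_*(G)$, the boundary $\partial_n$ restricting to $\Bas_*(G)$ as recorded in \eqref{symmetric-homology-resolution}. It is worth stressing that $\lambda$ is the correct choice here: by parts (3) and (4) of Lemma \ref{identities} the maps $\mu$ and $\nu$ commute with the differentials only up to a factor of $(n+1)$, whereas $\lambda$ is a genuine chain map.

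I would then apply the functor $A \underrel{\otimes}{G} -$ to this $G$-chain map, obtaining a chain map of complexes of abelian groups
$$\id_A \otimes \lambda_* : A \underrel{\otimes}{G} \Bas_*(G) \longrightarrow A \underrel{\otimes}{G} \bold\Lambda_*(G),$$
where the source is the complex \eqref{symmetric-homology-chain-complex} computing symmetric homology and the target is the complex computing exterior homology. Passing to homology yields the desired group homomorphism $\Has_*(G, A) \to \Ha_*^\lambda(G, A)$.

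The argument is essentially formal once $\lambda$ is singled out, so I do not expect a serious obstacle. The only points demanding care are the $G$-equivariance of $\lambda_n$, which is what legitimises tensoring over $G$ and is guaranteed by the diagonal action, and the survival of the chain-map identity upon restriction, which is guaranteed by $\Bas_*(G)$ being a genuine subcomplex of $\Ba_*(G)$.
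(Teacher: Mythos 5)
Your argument is correct and is essentially the paper's own: the paper obtains the map as the composite $\lambda_*\iota_*$ of the homomorphisms induced by the inclusion $\Bas_*(G)\hookrightarrow \Ba_*(G)$ and the projection $1\otimes\lambda_*$, and your direct restriction of $\lambda_*$ to the subcomplex $\Bas_*(G)$ is exactly this composite at the chain level, so it induces the same homomorphism on homology. Your remarks on $G$-equivariance and on $\lambda$ (rather than $\mu$ or $\nu$) being a genuine chain map are accurate and consistent with Lemma~\ref{identities}.
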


\begin{proof}
The surjective chain map $1 \otimes \lambda_*: A \otimes_G \Ba_*(G)  \longrightarrow A \otimes_G \bold\Lambda_*(G)$ gives a homomorphism of homology groups
$$\lambda_*: \Ha_*(G,A) \longrightarrow \Ha_*^\lambda(G,A).$$

Similarly, the inclusion $\Bas_*(G) \hookrightarrow \Ba_*(G)$ gives a chain map $\iota_*: A \otimes_G \Bas_*(G) \hookrightarrow A\otimes_G B_*(G)$, which further gives a homomorphism of homology groups
$$\iota_*: \Has_*(G, A) \longrightarrow \Ha_*(G,A).$$
The composite $\lambda_* \iota_*: \Has_*(G, A) \longrightarrow \Ha_*^\lambda(G,A)$ is the desired homomorphism.
\end{proof}
\medskip

\section{New cohomologies}\label{sec5}
We compare the classical, exterior and symmetric cohomologies using natural maps of their defining cochain complexes. This gives new cohomologies for groups and we make some basic observations about these cohomologies.
\par

Let $G$ be a group and $A$ a $G$-module. In view of the isomorphism \eqref{iso-two-complexes}, the cochain complex $\{\Ka^*(G, A), \delta^* \}$, where
\begin{eqnarray*}
\Ka^n(G, A) &= & \Hom_{G}\big(\mathbb{Z}[G^{n+1}], A\big)\\
 &= & \big\{ \sigma: \mathbb{Z}[G^{n+1}] \to A~|~ \sigma\big(g(g_0,\ldots,g_n)\big)=g\sigma(g_0,\ldots,g_n)~\mathrm{for~ all}~ g, g_i \in G \big\}
\end{eqnarray*}
and $\delta^n$ the standard coboundary map of \eqref{eqno3}, gives the classical cohomology $\Ha^*(G,A)$.
\par
Let us define
\begin{eqnarray*}
\KS^n (G,A) &:=& \big\{ \sigma \in \Ka^n(G, A)~|~ \sigma(g_0,\ldots,g_i,g_{i+1},\ldots,g_n)=-\sigma(g_0,\ldots,g_{i+1},g_i,\ldots,g_n)\\
& &\mathrm{for~ all}~0\leq i< n~\mathrm{and}~g_0, g_1, \ldots, g_n \in G \big\}
\end{eqnarray*}
and
\begin{eqnarray*}
\Ka^n_\lambda (G,A) &:=& \big\{\sigma \in \KS^n(G,A)~|~ \sigma(g_0,\ldots,g_i,g_i,\ldots,g_n)=0\\
& &\mathrm{for~ all}~0\leq i< n~\mathrm{and}~g_0, g_1, \ldots, g_n \in G \big\}.
\end{eqnarray*}

By Lemma \ref{staic-alternate}, the cohomology of the cochain complex $\{\KS^* (G,A), \delta^*\}$ is the symmetric cohomology $\Has^*(G,A)$. Similarly, by \cite[Lemma 3.5]{Pirashvili}, the cohomology of the cochain complex $\{\Ka^*_\lambda (G,A), \delta^* \}$ is the exterior cohomology $\Ha_\lambda^*(G,A)$.
\par

If the groups and the modules are clear from the context, for brevity, we write the complexes as $\Ka^*, \KS^*, \Ka^*_\lambda$, and their cohomologies as $\Ha^*, \Has^*, \Ha^*_\lambda$, respectively.
\medskip

\subsubsection{The cohomology $\Ha^*_{s \lambda}$}
We denote the cohomology groups of the quotient cochain complex $\{ \KS^*/\Ka^*_\lambda, ~\overline{\delta}^*\}$ by $\Ha^*_{s \lambda}$, where $\overline{\delta}^*$ is the induced coboundary map. We note that the cohomology $\Ha^*_{s \lambda}$ was originally introduced in \cite[Section 3.2]{Pirashvili} where it is denoted as $\Ha^*_\delta$. The following result follows from \cite[Theorem 3.9 and Proposition 3.6]{Pirashvili}.

\begin{prop}
Let $G$ be a group and $A$ a $G$-module. Then the following hold:
\begin{enumerate}
\item There exists an isomorphism
$$ \Has^n(G,A) \cong \Ha^n_\lambda(G,A) \oplus \Ha^n_{s \lambda}(G,A)$$
for each $n \ge 0$.
\item  $\Ha^n_{s \lambda}(G,A)=0$ for all $0 \le n \le 4$. 
\item If $A$ has no element of order 2, then  $\Ha^n_{s \lambda}(G,A)=0$ for all $n \ge 0$.
\end{enumerate}
\end{prop}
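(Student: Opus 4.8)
The plan is to organize all three parts around the short exact sequence of cochain complexes
\[
0 \longrightarrow \Ka^*_\lambda \longrightarrow \KS^* \longrightarrow \KS^*/\Ka^*_\lambda \longrightarrow 0,
\]
whose terms compute $\Ha^*_\lambda$, $\Has^*$, and (by the very definition of the quotient) $\Ha^*_{s\lambda}$. This yields a long exact sequence
\[
\cdots \to \Ha^{n-1}_{s\lambda} \xrightarrow{\ \partial\ } \Ha^n_\lambda \xrightarrow{\ \gamma^*\ } \Has^n \to \Ha^n_{s\lambda} \xrightarrow{\ \partial\ } \Ha^{n+1}_\lambda \to \cdots,
\]
in which the map $\Ha^n_\lambda \to \Has^n$ is precisely the map $\gamma^*$ induced by the inclusion $\Ka^*_\lambda \hookrightarrow \KS^*$ recorded in the Introduction.

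For part (3) I would argue at the cochain level. Let $\sigma \in \KS^n$. Applying skew-symmetry to a tuple with two equal adjacent entries gives $\sigma(\ldots, g_i, g_i, \ldots) = -\sigma(\ldots, g_i, g_i, \ldots)$, hence $2\,\sigma(\ldots, g_i, g_i, \ldots) = 0$; thus every degenerate value of a skew-symmetric cochain lies in the $2$-torsion of $A$. When $A$ has no element of order $2$ these values vanish, so $\KS^n = \Ka^n_\lambda$ for every $n$, the quotient complex is the zero complex, and $\Ha^n_{s\lambda}(G,A)=0$ for all $n$ (and, as a consistency check, $\Has^* = \Ha^*_\lambda$). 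This settles (3).

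For part (1) I would invoke the fact, stated in the Introduction, that $\gamma^*\colon \Ha^*_\lambda \to \Has^*$ is a split monomorphism. Injectivity of $\gamma^*$ in every degree forces $\im(\partial)=\ker(\gamma^*)=0$, so all connecting homomorphisms in the long exact sequence vanish and it breaks into short exact sequences
\[
0 \longrightarrow \Ha^n_\lambda \xrightarrow{\ \gamma^*\ } \Has^n \longrightarrow \Ha^n_{s\lambda} \longrightarrow 0.
\]
A left inverse to $\gamma^*$ then splits each of these, giving $\Has^n \cong \Ha^n_\lambda \oplus \Ha^n_{s\lambda}$.

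The remaining and most substantial task is part (2): the vanishing of $\Ha^n_{s\lambda}$ for $0 \le n \le 4$. By the decomposition this is equivalent to $\gamma^*$ being an isomorphism in those degrees, i.e. to the quotient complex $\KS^*/\Ka^*_\lambda$ being acyclic in degrees $\le 4$. Its entries are built from the degenerate values above and so consist of $2$-torsion groups; I would compute $\KS^n/\Ka^n_\lambda$ together with its induced differential explicitly for small $n$ (starting from $\KS^0=\Ka^0_\lambda$, so the complex begins with $0$) and check directly that every low-degree cocycle is a coboundary. I expect this degree-by-degree analysis to be the main obstacle, and it cannot be replaced by a soft vanishing argument: the bound is sharp, since the known values $\Has^5(\Z_2,\Z_2)=\Z_2$ and $\Ha^5_\lambda(\Z_2,A)=0$ force $\Ha^5_{s\lambda}(\Z_2,\Z_2)\ne 0$.
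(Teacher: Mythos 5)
The paper itself offers no argument here: it simply attributes the statement to Pirashvili's Theorem~3.9 and Proposition~3.6. Measured against that, your treatment of parts (1) and (3) is fine and in fact more informative. For (3), the observation that skew-symmetry forces $2\,\sigma(\ldots,g,g,\ldots)=0$, hence $\KS^n(G,A)=\Ka^n_\lambda(G,A)$ when $A$ has no $2$-torsion, is a correct, self-contained cochain-level proof that the quotient complex vanishes identically. For (1), deriving the splitting of the long exact sequence of $0\to\Ka^*_\lambda\to\KS^*\to\KS^*/\Ka^*_\lambda\to 0$ from the split-monomorphism property of $\gamma^*$ is a valid deduction, with the caveat that the split-mono statement is itself the content of the cited Theorem~3.9, so you have not made part (1) independent of Pirashvili either; you should also say a word confirming that the map $\gamma^*$ of the Introduction is the one induced by the inclusion $\Ka^*_\lambda\hookrightarrow\KS^*$, which is what places it in your long exact sequence.

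The genuine gap is part (2). You correctly identify that it amounts to acyclicity of $\KS^*/\Ka^*_\lambda$ in degrees $\le 4$, and correctly note (via $\Has^5(\Z_2,\Z_2)=\Z_2$ and $\Ha^5_\lambda(\Z_2,\Z_2)=0$) that the bound is sharp, so no soft argument can work --- but you then stop at ``I would compute the quotient complex degree by degree and check that every low-degree cocycle is a coboundary.'' That computation is the entire mathematical content of (2): one must identify $\KS^n/\Ka^n_\lambda$ as a complex built from the degenerate values of skew-symmetric cochains (all of which are $2$-torsion), determine the induced differential, and exhibit explicit cochain-level contractions or case analyses in degrees $0$ through $4$. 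This is precisely the nontrivial part of Pirashvili's Proposition~3.6, and it is not routine; the sharpness at $n=5$ shows that whatever mechanism kills cocycles in degrees $\le 4$ must fail at $5$, so the verification cannot be uniform in $n$ and has to be carried out degree by degree in earnest. As written, your proposal proves (1) and (3) modulo a cited splitting, but does not prove (2); either carry out the low-degree computation or cite Pirashvili's Proposition~3.6 explicitly for that part, as the paper does.
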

\medskip

\subsubsection{The cohomology $\Ha^*_{c \lambda}$}
The quotient cochain complex $\{ \Ka^*/\Ka^*_\lambda, \overline{\delta}^*\}$ gives cohomology groups, which we denote by $\Ha^*_{c \lambda}$. The short exact sequence of cochain complexes
$$ 0 \longrightarrow \Ka^*_\lambda \longrightarrow \Ka^* \longrightarrow \Ka^*/\Ka^*_\lambda \longrightarrow 0,$$
gives the long exact sequence of cohomology groups
\begin{equation}\label{long-exact-ext-ord}
0  \to \Ha^0_\lambda \to \Ha^0 \to \Ha^0_{c \lambda} \to \Ha^1_\lambda \to \Ha^1 \to \Ha^1_{c \lambda} \to  \Ha^2_\lambda \to \Ha^2 \to \Ha^2_{c \lambda} \to \cdots.
\end{equation}

\begin{prop}
Let $G$ be a group and $A$ a $G$-module. Then the following hold:
\begin{enumerate}
\item $\Ha^0_{c \lambda}(G, A)=0=\Ha^1_{c \lambda}(G, A)$.
\item If $G$ has no element of finite order, then $\Ha^n_{c \lambda}(G, A)=0$ for all $n \ge 0$.
\end{enumerate}
\end{prop}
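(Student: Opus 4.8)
The plan is to handle the two parts by different means: part (1) by a direct low-degree computation in the quotient complex $\Ka^*/\Ka^*_\lambda$, and part (2) by showing that for torsion-free $G$ the comparison map $\beta^*$ is an isomorphism, after which the long exact sequence does the rest.

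For part (1) I would first note that the defining conditions of $\KS^0$ and of $\Ka^0_\lambda$ involve indices $0 \le i < 0$ and are therefore vacuous, so $\Ka^0_\lambda = \Ka^0$ and the degree-zero term of $\Ka^*/\Ka^*_\lambda$ vanishes. This gives $\Ha^0_{c\lambda}(G,A)=0$ at once and also makes $\overline\delta^0=0$, so that $\Ha^1_{c\lambda}(G,A)=\ker\big(\overline\delta^1\colon \Ka^1/\Ka^1_\lambda \to \Ka^2/\Ka^2_\lambda\big)$. It then suffices to show that $\delta^1\sigma\in\Ka^2_\lambda$ forces $\sigma\in\Ka^1_\lambda$. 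Using $(\delta^1\sigma)(g_0,g_1,g_2)=\sigma(g_1,g_2)-\sigma(g_0,g_2)+\sigma(g_0,g_1)$, the vanishing of $\delta^1\sigma$ on equal adjacent arguments gives $(\delta^1\sigma)(g_0,g_0,g_2)=\sigma(g_0,g_0)=0$, and the skew-symmetry of $\delta^1\sigma$ under the transposition of its first two arguments gives $\sigma(g_0,g_1)=-\sigma(g_1,g_0)$. These are precisely the two conditions defining $\Ka^1_\lambda$, so $\overline\delta^1$ is injective and $\Ha^1_{c\lambda}(G,A)=0$.

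For part (2) I would use the long exact sequence \eqref{long-exact-ext-ord} and prove that, when $G$ is torsion-free, $\beta^n\colon\Ha^n_\lambda(G,A)\to\Ha^n(G,A)$ is an isomorphism for every $n$; exactness then forces $\Ha^n_{c\lambda}(G,A)=0$ for all $n$. Since $\{\bold\Lambda_*(G),\partial_*\}$ is already a resolution of the trivial module $\mathbb{Z}$ and $\beta^*$ is induced by the chain map $\lambda_*\colon\bold{T}_*(G)\to\bold\Lambda_*(G)$ lying over $\id_{\mathbb{Z}}$, it is enough to show that each $\bold\Lambda_n(G)=\Lambda^{n+1}(\mathbb{Z}[G])$ is a free $G$-module. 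Then $\bold\Lambda_*(G)$ is a projective resolution of $\mathbb{Z}$, and $\lambda_*$ realizes the canonical comparison isomorphism between two projective resolutions computing $\Ha^*(G,A)$, so $\beta^*$ is an isomorphism.

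The step I expect to be the main obstacle is the freeness of $\bold\Lambda_n(G)$. Fixing a linear order on $G$, a $\mathbb{Z}$-basis of $\Lambda^{n+1}(\mathbb{Z}[G])$ is indexed by the $(n+1)$-element subsets of $G$, which $G$ permutes by left translation up to sign. Torsion-freeness makes this action free: if $gS=S$ for a finite subset $S$, then $g$ permutes $S$, so a suitable power $g^k$ fixes $S$ pointwise and hence $g^k=1$, whence $g=1$. A signed permutation module over a free $G$-set is free over $\mathbb{Z}[G]$, the signs being absorbable orbit-by-orbit exactly because the stabilizers are trivial, so $\bold\Lambda_n(G)$ is $G$-free and the homological conclusion is formal. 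As an alternative I could argue directly that the kernel complex $\bold{K}_*=\ker(\lambda_*)$ is a bounded-below acyclic complex of projectives, hence contractible, and that the degreewise-split sequence $0\to\bold{K}_*\to\bold{T}_*\to\bold\Lambda_*\to 0$ yields $\Ka^*/\Ka^*_\lambda\cong\Hom_G(\bold{K}_*,A)$; contractibility then gives $\Ha^*_{c\lambda}(G,A)=0$ immediately.
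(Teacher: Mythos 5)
Your argument is correct, but it proceeds quite differently from the paper's proof, which handles both parts entirely at the level of the long exact sequence \eqref{long-exact-ext-ord} together with external citations. For part (1) the paper quotes three facts: $\Ha^0_\lambda\cong\Ha^0$ and $\Ha^1_\lambda\cong\Ha^1$ (Zarelua), $\Ha^2_\lambda\cong\Has^2$ (Pirashvili), and the injectivity of $\Has^2\to\Ha^2$ (Staic), and then reads off $\Ha^0_{c\lambda}=\Ha^1_{c\lambda}=0$ from exactness; your direct cochain computation in the quotient complex $\Ka^*/\Ka^*_\lambda$ replaces all of this with the elementary observation that $\Ka^0_\lambda=\Ka^0$ and that $\delta^1\sigma\in\Ka^2_\lambda$ forces $\sigma\in\Ka^1_\lambda$, which checks out (evaluating $\delta^1\sigma$ on $(g_0,g_0,g_2)$ gives the vanishing condition, and skew-symmetry in the first two slots gives antisymmetry of $\sigma$). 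For part (2) the paper simply cites \cite[Corollary 4.4(iii)]{Pirashvili} for the isomorphism $\Ha^n_\lambda(G,A)\cong\Ha^n(G,A)$ in the torsion-free case; you essentially reprove that corollary by showing each $\bold\Lambda_n(G)$ is a free $G$-module (the action of a torsion-free group on the $(n+1)$-element subsets of $G$ indexing a $\mathbb{Z}$-basis has trivial stabilizers, and the signs are harmless over a free orbit), so that $\bold\Lambda_*(G)$ is a projective resolution and $\lambda_*$ is a comparison map of projective resolutions over $\id_{\mathbb{Z}}$. What your route buys is self-containment and transparency -- part (1) becomes a two-line verification and part (2) exposes the actual mechanism behind the cited isomorphism -- at the cost of redoing work already available in the literature; the paper's route is shorter but leans on the stated low-degree coincidences and on Pirashvili's corollary as black boxes.
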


\begin{proof}
By \cite{Zarelua}, $\Ha^0_\lambda(G,A) = \Ha^0(G,A)$ and $\Ha^1_\lambda(G,A) = \Ha^1(G,A)$. By \cite[Theorem 3.9]{Pirashvili}, the homomorphism $\Ha^2_\lambda(G,A) \to \Has^2(G,A)$ is an isomorphism. But, $\Has^2(G,A) \to \Ha^2(G,A)$ is an embedding. Hence the homomorphism $\Ha^2_\lambda(G,A) \to \Ha^2(G,A)$ is an embedding being the composite $\Ha^2_\lambda(G,A) \to \Has^2(G,A) \to \Ha^2(G,A)$. The assertion now follows from the long exact sequence \eqref{long-exact-ext-ord}.
\par
By \cite[Corollary 4.4(iii)]{Pirashvili},  if $G$ has no element of finite order, then the homomorphism $\Ha^n_\lambda(G,A) \to \Ha^n(G,A)$ is an isomorphism for all $n \ge 0$, and the result again follows from \eqref{long-exact-ext-ord}.
\end{proof}

\subsubsection{The cohomology $\Ha^*_{c s}$}
As in the preceding cases, let us denote the cohomology of the quotient complex $\{\Ka^*/\KS^*, \overline{\delta}^*\}$ by $\Ha^*_{c s}$. The short exact sequence of cochain complexes
$$ 0 \longrightarrow \KS^* \longrightarrow \Ka^* \longrightarrow \Ka^*/\KS^*  \longrightarrow 0$$
gives the long exact sequence
\begin{equation}\label{long-exact-symm-ord}
0  \to \Has^0 \to \Ha^0 \to \Ha^0_{c s} \to \Has^1 \to \Ha^1 \to \Ha^1_{c s} \to  \Has^2 \to \Ha^2 \to \Ha^2_{c s} \to  \cdots.
\end{equation}

\begin{prop}
Let $G$ be a group and $A$ a $G$-module.  Then the following hold:
\begin{enumerate}
\item $\Ha^0_{c s}(G, A)=0=\Ha^1_{c s}(G, A)$.
\item If $n + 1$ is not a zero divisor and the equation $n!\,x = a$ has exactly one solution in $A$, then there exists a short exact sequence of  groups $$0 \longrightarrow \Has^n (G,A) \longrightarrow \Ha^n(G,A) \longrightarrow \Ha^n_{c s}(G,A)  \longrightarrow 0$$ for each $n \ge 0$.
\end{enumerate}
\end{prop}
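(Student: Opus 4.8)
The plan is to read everything off the long exact sequence \eqref{long-exact-symm-ord} attached to $0\to\KS^*\to\Ka^*\to\Ka^*/\KS^*\to0$, so that both parts reduce to controlling the natural map $j^n:\Has^n(G,A)\to\Ha^n(G,A)$ induced by the inclusion of cochain complexes. The engine for part (2) is the dual of the skew-symmetrization $\mu_n$ defined just before Lemma \ref{identities}: let $s_n:\Ka^n\to\Ka^n$ be $s_n(\sigma)=\sigma\circ\mu_n$. Since $\mu_n\circ\tau_j=-\mu_n$, the image of $s_n$ lands in $\KS^n$, and on $\KS^n$ one computes $s_n=(n+1)!\cdot\id$. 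Dualizing the identity $\partial_n\mu_n=(n+1)\,\mu_{n-1}\partial_n$ of Lemma \ref{identities}(3) gives the intertwining relation $s_n\,\delta^{n-1}=(n+1)\,\delta^{n-1}\,s_{n-1}$, which is the crucial input.

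For part (1): in degree $0$ the skew-symmetry constraint is vacuous, so $\KS^0=\Ka^0$ and hence $(\Ka^*/\KS^*)^0=0$, giving $\Ha^0_{cs}(G,A)=0$ at once. For degree $1$ I would check directly that every classical $1$-cocycle and every $1$-coboundary already lies in $\KS^1$: evaluating the cocycle condition $\sigma(g_1,g_2)-\sigma(g_0,g_2)+\sigma(g_0,g_1)=0$ on the diagonal forces $\sigma(g,g)=0$, whence the substitution $g_2=g_0$ yields $\sigma(g_0,g_1)=-\sigma(g_1,g_0)$, while $\delta^0(a)(g_0,g_1)=g_1a-g_0a$ is visibly skew. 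Thus $j^1$ is an isomorphism $\Has^1\cong\Ha^1$. Combining this with the injectivity of $j^2:\Has^2\to\Ha^2$ (recalled from \cite[Lemma 3.1]{Staic2}) in \eqref{long-exact-symm-ord}, the connecting map $\Ha^1_{cs}\to\Has^2$ vanishes and $\Ha^1\to\Ha^1_{cs}$ is the zero map, forcing $\Ha^1_{cs}(G,A)=0$.

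For part (2): the first move is to prove $j^n$ injective. Given a skew-symmetric cocycle $\sigma\in\KS^n$ that becomes a coboundary $\sigma=\delta^{n-1}\tau$ with $\tau\in\Ka^{n-1}$, apply $s_n$ and use the two facts above to obtain $(n+1)!\,\sigma=s_n\sigma=(n+1)\,\delta^{n-1}(s_{n-1}\tau)$ with $s_{n-1}\tau\in\KS^{n-1}$. Since $n+1$ is not a zero divisor on $A$ it may be cancelled, leaving $n!\,\sigma=\delta^{n-1}(s_{n-1}\tau)$; because $n!$ acts bijectively on $A$ (unique solvability of $n!\,x=a$), the element $\tau':=(n!)^{-1}s_{n-1}\tau\in\KS^{n-1}$ satisfies $\delta^{n-1}\tau'=\sigma$, so $[\sigma]=0$ in $\Has^n$. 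The identical computation one degree higher shows $j^{n+1}$ is injective as well, whence the connecting map $\Ha^n_{cs}\to\Has^{n+1}$ has image $\ker(j^{n+1})=0$ and $\Ha^n\to\Ha^n_{cs}$ is onto. The long exact sequence \eqref{long-exact-symm-ord} then breaks into the asserted short exact sequences $0\to\Has^n(G,A)\to\Ha^n(G,A)\to\Ha^n_{cs}(G,A)\to0$.

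The step I expect to be most delicate is the arithmetic calibration in part (2). The operator $s_n$ realizes multiplication by the \emph{full} factorial $(n+1)!$ on $\KS^n$, so a naive chain-level retraction of $\KS^*\hookrightarrow\Ka^*$ would demand invertibility of $(n+1)!$ on $A$; the point of the stated hypothesis is that the intertwining relation peels off exactly one factor of $n+1$, so that mere non-zero-divisibility of $n+1$ together with bijectivity of $n!$ is enough. Getting this right requires verifying carefully that $s_n$ indeed takes values in $\KS^n$ and intertwines with $\delta^{n-1}$ with the precise coefficient $n+1$ (both consequences of Lemma \ref{identities}(3)), and tracking that each divisibility hypothesis is applied in the correct degree: the condition at level $n$ yields injectivity of $j^n$, while its analogue at level $n+1$ is what supplies surjectivity of $\Ha^n\to\Ha^n_{cs}$.
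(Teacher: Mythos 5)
Your argument is correct, and its skeleton --- reduce everything to injectivity of the comparison map $j^n:\Has^n(G,A)\to\Ha^n(G,A)$ and then read off the long exact sequence \eqref{long-exact-symm-ord} --- is exactly the paper's. The difference lies in how the injectivity statements are obtained: the paper simply cites them ($\Has^0=\Ha^0$ by definition, $\Has^1=\Ha^1$ from Todea's Proposition~2.1, injectivity of $j^2$ from Staic's Lemma~3.1, and injectivity of $j^n$ under the arithmetic hypotheses from Staic's Proposition~4.1), whereas you reprove them from scratch. Your degree-$0$ and degree-$1$ verifications are the standard ones, and your operator $s_n(\sigma)=\sigma\circ\mu_n$ is precisely the mechanism behind Staic's Proposition~4.1: it lands in $\KS^n$, restricts to multiplication by $(n+1)!$ there, and satisfies $s_n\,\delta^{n-1}=(n+1)\,\delta^{n-1}s_{n-1}$ by dualizing Lemma~\ref{identities}(3), so that non-zero-divisibility of $n+1$ together with unique divisibility by $n!$ kills the class of any skew-symmetric cocycle that bounds in $\Ka^*$. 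What your version buys is self-containedness and transparency about where each hypothesis enters; in particular you make explicit a point the paper glosses over, namely that extracting the short exact sequence in degree $n$ from the long exact sequence requires injectivity of $j^{n+1}$ as well as of $j^n$, so the divisibility hypothesis must be read as holding in all degrees (which is how the ``for each $n\ge 0$'' in the statement forces one to read it anyway).
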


\begin{proof}
By definition of symmetric cohomology, $\Has^0(G,A) = \Ha^0(G,A)$. By \cite[Proposition 2.1]{Todea}, $\Has^1(G,A) = \Ha^1(G,A)$. Further, by \cite[Lemma 3.1]{Staic2}, the homomorphism $\Has^2(G,A) \to \Ha^2(G,A)$ is injective, and the result now follows from the long exact sequence \eqref{long-exact-symm-ord}.
\par

By \cite[Proposition 4.1]{Staic2}, for such a group $A$, the homomorphism $\Has^n(G,A) \to \Ha^n(G,A)$ is injective for each $n \ge 0$, and the result follows from \eqref{long-exact-symm-ord}.
\end{proof}
\medskip

\section{Computations of symmetric homology}\label{sec6}

\subsection{Some general results}
We begin with some basic but general results.

\begin{prop}\label{symm-homo-o-dim}
Let $G$ be a group and  $A=\mathbb{Z}[G]$ viewed as a right $G$-module. Then the following holds:
\begin{enumerate}
\item $\Has_0(G, A)= \mathbb{Z}[G]/2 \Delta(G)$, where $\Delta(G)$ is the augmentation ideal of $\mathbb{Z}[G]$.
\item If $G$ is of order $n$, then $\Has_i(G, A)= 0$ for all $i \ge n-1$.
\end{enumerate}
\end{prop}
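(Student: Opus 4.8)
The plan is to exploit that the coefficient module $A=\mathbb{Z}[G]$ is free: since $A\otimes_G - = \mathbb{Z}[G]\otimes_{\mathbb{Z}[G]} -$ is naturally isomorphic to the forgetful functor from left $G$-modules to abelian groups, it is exact and carries the complex $\{A\otimes_G\Bas_*(G)\}$ isomorphically onto $\{\Bas_*(G),\partial_*\}$ as complexes of abelian groups. Thus I would first reduce the computation of $\Has_*(G,A)$ to that of the homology of $\{\Bas_*(G),\partial_*\}$ itself, whose boundary maps are the restrictions of the standard differentials.

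For (1), note that $\Bas_0(G)=\Ba_0(G)=\mathbb{Z}[G]$ since $\Sigma_1$ is trivial, so $\Has_0(G,A)=\operatorname{coker}\big(\partial_1\colon\Bas_1(G)\to\Bas_0(G)\big)$. The group $\Bas_1(G)$ is generated by the elements $g_0\otimes g_1-g_1\otimes g_0$, and $\partial_1(g_0\otimes g_1-g_1\otimes g_0)=(g_1-g_0)-(g_0-g_1)=2(g_1-g_0)$. Since the differences $g_1-g_0$ span the augmentation ideal $\Delta(G)$, the image of $\partial_1$ is exactly $2\Delta(G)$, giving $\Has_0(G,A)=\mathbb{Z}[G]/2\Delta(G)$.

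For (2), the starting observation is that $\bold\Lambda_i(G)=\Lambda^{i+1}(\mathbb{Z}[G])=0$ whenever $i+1>|G|=n$, because $\mathbb{Z}[G]$ is free of $\mathbb{Z}$-rank $n$; in particular $\bold\Lambda_i(G)=0$ for $i\ge n$ and $\bold\Lambda_{n-1}(G)=\Lambda^{n}(\mathbb{Z}[G])\cong\mathbb{Z}$. As $\Bas_i(G)=\im(\nu_i)$, this forces $\Bas_i(G)=0$ for $i\ge n$, whence $\Has_i(G,A)=0$ for $i\ge n$ is immediate and $\Has_{n-1}(G,A)=\ker\big(\partial_{n-1}\colon\Bas_{n-1}(G)\to\Bas_{n-2}(G)\big)$. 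To show this kernel vanishes I would pass to the exterior complex $\{\bold\Lambda_*(G)\}$, which, unlike $\{\Bas_*(G)\}$, is a genuine resolution of $\mathbb{Z}$. Given $y\in\Bas_{n-1}(G)$ with $\partial_{n-1}(y)=0$, write $y=\nu_{n-1}(z)$ with $z\in\bold\Lambda_{n-1}(G)$; then parts (1) and (2) of \lemref{identities} give $\lambda_{n-1}(y)=n!\,z$ and $\partial_{n-1}(n!\,z)=\lambda_{n-2}\partial_{n-1}(y)=0$. Exactness of the exterior resolution at $\bold\Lambda_{n-1}(G)$, together with $\bold\Lambda_n(G)=0$, makes $\partial_{n-1}$ injective there, so $n!\,z=0$; as $\bold\Lambda_{n-1}(G)\cong\mathbb{Z}$ is torsion-free, $z=0$ and hence $y=0$.

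The step I expect to be the main obstacle is precisely this top degree $i=n-1$. The vanishing for $i\ge n$ is formal once one knows $\bold\Lambda_i(G)=0$, but $\Has_{n-1}$ lives one degree below where the complex $\Bas_*(G)$ dies, and because $\{\Bas_*(G)\}$ is not itself a resolution its acyclicity there cannot be read off directly. The device that makes it work is the comparison with the exterior resolution through the maps $\nu_*$ and $\lambda_*$, using that $\lambda_{n-1}\nu_{n-1}$ is multiplication by $n!$ and that $\bold\Lambda_{n-1}(G)$ is torsion-free, so that the injectivity of $\partial_{n-1}$ on the exterior side transfers back to $\Bas_{n-1}(G)$.
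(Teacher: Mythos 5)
Your proof is correct, and at the two places where real work is needed it takes a different route from the paper's. For part (1) you compute $\partial_1$ directly on the generators $g_0\otimes g_1-g_1\otimes g_0$ of $\Bas_1(G)$ to get $\im\partial_1=2\Delta(G)$, whereas the paper obtains the same identity by chasing the commutative square $\partial_1\nu_1=2\nu_0\partial_1$ from Lemma~\ref{identities}(4) and invoking exactness of the exterior resolution in degree $0$; the two arguments are equivalent in content. For part (2) the crux is injectivity of $\partial_{n-1}$ on $\Bas_{n-1}(G)$. The paper proves this by explicitly computing $\partial_{n-1}\big(\mu_{n-1}(g_1\otimes\cdots\otimes g_n)\big)=n\sum_j(-1)^j\mu_{n-2}(g_1\otimes\cdots\otimes\widehat{g_j}\otimes\cdots\otimes g_n)$ and observing that the summands are linearly independent (they are supported on disjoint sets of basis tensors), so the boundary of the generator of $\Bas_{n-1}(G)\cong\mathbb{Z}$ is nonzero. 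You instead transfer injectivity from the exterior complex: writing a cycle as $\nu_{n-1}(z)$, applying $\lambda_{n-1}\nu_{n-1}=n!$ and $\lambda_{n-2}\partial_{n-1}=\partial_{n-1}\lambda_{n-1}$, and using that $\partial_{n-1}$ is injective on $\bold\Lambda_{n-1}(G)\cong\mathbb{Z}$ (since $\bold\Lambda_n(G)=0$ and the exterior complex is a resolution) together with torsion-freeness. Your version is more structural and sidesteps the independence claim, at the cost of leaning on the exactness of the exterior resolution in top degree --- an input the paper also uses, but only for part (1); the paper's direct computation for part (2) is more elementary and self-contained. Your explicit reduction via the natural isomorphism $\mathbb{Z}[G]\otimes_{\mathbb{Z}[G]}-\cong\mathrm{id}$, and the observation that surjectivity of $\nu_i$ forces $\Bas_i(G)=0$ once $\bold\Lambda_i(G)=0$, are both correct and make explicit steps the paper leaves implicit.
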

\begin{proof}
Notice that $\bold\Lambda_0(G)=\Bas_0(G)= \mathbb{Z}[G]$. In view of Lemma \ref{identities}(4), we have the following commutative diagram

$$
\xymatrix{
\bold\Lambda_1(G)  \ar[r]^{\partial_1}  \ar[d]^{\nu_1} & \bold\Lambda_0(G) \ar[d]^{2\id}  \ar[r]^{\varepsilon} & \mathbb{Z}  \\
 \Bas_1(G) \ar[r]^{\partial_1}  & \Bas_0(G) \ar[r]^{\varepsilon} & \mathbb{Z}.}
$$

The top row of the diagram being part of \eqref{exterior-resolution} is exact. It follows that
$$
\im\big(\partial_1:\Bas_1(G) \to \Bas_0(G) \big) = 2 \Ker \big(\varepsilon: \Bas_0(G) \to \mathbb{Z} \big)= 2 \Delta(G).$$
Hence $\Has_0(G, A)= \mathbb{Z}[G]/2 \Delta(G)$ which proves (1).
\par
Let  $G=\{g_1, g_2, \ldots, g_n\}$. Since $\Bas_i(G)=0 $ for all $i \ge n$, it follows that $\Has_i(G, A)= 0$ for all $i \ge n$. Further, we have $$\Bas_{n-1}(G)= \textrm{mod}_{\mathbb{Z}[G]} \big\langle \mu_{n-1}(g_1\otimes g_2\otimes \cdots \otimes g_n) \big\rangle\cong \mathbb{Z},$$ since
\begin{eqnarray*}
g \,\mu_{n-1}(g_1\otimes g_2 \otimes \cdots \otimes g_n) &=& \mu_{n-1}\big(g_{\sigma(1)}\otimes g_{\sigma(2)}\otimes \cdots \otimes g_{\sigma(n)}\big)~\textrm{for some}~\sigma\in \Sigma_n\\
&=&\mu_{n-1}\big(g_1\otimes g_2 \otimes \cdots \otimes g_n\big).
\end{eqnarray*}
By Lemma \ref{identities}(3), we have
\begin{eqnarray*}
\partial_{n-1}\big(\mu_{n-1}(g_1\otimes g_2\otimes \cdots \otimes g_n)\big) &= & n \mu_{n-2} \big( \partial_{n-1}(g_1\otimes g_2\otimes \cdots \otimes g_n)\big)\\
&= & n \mu_{n-2} \big( \sum_{j=1}^n (-1)^j(g_1\otimes  \cdots \otimes\hat{g_j} \otimes \cdots \otimes g_n)\big)\\
&= & n  \sum_{j=1}^n (-1)^j \mu_{n-2}(g_1\otimes  \cdots \otimes \hat{g_j} \otimes \cdots \otimes g_n)\\
& \neq & 0,
\end{eqnarray*}
since the summands are independent. Thus, $\Ker(\partial_{n-1} )=0$, and hence $\Has_{n-1}(G, A)=0$, completing the proof of assertion (2).
\end{proof}

\begin{prop}\label{symm-homo-o-dim-trivial}
If $G$ is a group and $A$ a trivial right $G$-module, then $\Has_0(G, A)= A$. Further, if $G$ is of order $n$, then $\Has_i(G, A)= 0$ for all $i \ge n$.
\end{prop}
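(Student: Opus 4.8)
The plan is to treat the two assertions separately, the first reducing to a direct computation of the zeroth homology and the second following immediately from the vanishing of the symmetric chain groups.

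For the first assertion, I would start from the observation (already recorded in the proof of Proposition \ref{symm-homo-o-dim}) that $\Bas_0(G) = \mathbb{Z}[G]$, since the only generator $\mu_0(g_0) = g_0$ ranges over all of $G$. Thus $\Has_0(G, A)$ is the cokernel of $\partial_1 : A \otimes_G \Bas_1(G) \to A \otimes_G \mathbb{Z}[G]$. Since $A$ is a trivial right $G$-module, the standard identification $A \otimes_G \mathbb{Z}[G] \cong A$ holds via $a \otimes g \mapsto a$; indeed $a \otimes g = (a \cdot g) \otimes 1 = a \otimes 1$ because $a \cdot g = a$. It then remains to show that the image of $\partial_1$ is zero under this identification.

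To see this, recall that $\Bas_1(G)$ is generated over $\mathbb{Z}$ by the elements $\mu_1(g_0 \otimes g_1) = g_0 \otimes g_1 - g_1 \otimes g_0$. A short calculation with the induced boundary gives
$$\partial_1\big(\mu_1(g_0 \otimes g_1)\big) = (g_1 - g_0) - (g_0 - g_1) = 2(g_1 - g_0).$$
Tensoring with the trivial module $A$ and using $a \otimes g_1 = a \otimes 1 = a \otimes g_0$, every generator $a \otimes \mu_1(g_0 \otimes g_1)$ maps to $2(a \otimes g_1 - a \otimes g_0) = 0$. Hence $\partial_1$ becomes the zero map after tensoring, so $\Has_0(G, A) \cong A \otimes_G \mathbb{Z}[G] \cong A$. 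For the second assertion, I would invoke the fact, established in the proof of Proposition \ref{symm-homo-o-dim}(2), that $\Bas_i(G) = 0$ whenever $i \ge n = |G|$: any generating alternating sum over $\Sigma_{i+1}$ with $i + 1 > n$ necessarily involves a repeated group element and therefore vanishes. Consequently the chain groups $A \otimes_G \Bas_i(G)$ vanish for all $i \ge n$, and so do their homology groups $\Has_i(G, A)$.

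The computations here are entirely routine; the only point requiring care is the tensor-product bookkeeping in the first part, namely correctly tracking how the factor of $2$ produced by the alternating sum interacts with the trivial $G$-action to kill the boundary. Unlike the sharper bound $i \ge n-1$ obtained for $A = \mathbb{Z}[G]$ in Proposition \ref{symm-homo-o-dim}, no injectivity analysis of $\partial_{n-1}$ is needed here, so there is no genuine obstacle to the argument.
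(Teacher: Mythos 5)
Your proof is correct and follows essentially the same route as the paper: both arguments show that $\partial_1$ kills every generator $a\otimes\mu_1(g_0\otimes g_1)$ because $\partial_1\mu_1 = 2\partial_1$ produces $2(g_1-g_0)$, which vanishes after tensoring with a trivial module, and both deduce the vanishing for $i\ge n$ from $\Bas_i(G)=0$. The only cosmetic difference is that you compute $\partial_1\mu_1$ by hand where the paper cites Lemma~\ref{identities}(3).
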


\begin{proof}
Recall that $\Bas_1(G)$ is generated by $\big\{\mu_1(g \otimes h)~|~g, h \in G \big\}$. For $g, h \in G$ and $a \in A$, we have
\begin{eqnarray*}
\partial_1\big(a \otimes \mu_1(g \otimes h)\big) & =& a \otimes \partial_1\big(\mu_1(g \otimes h)\big)\\
& =& a \otimes 2 \partial_1(g \otimes h)~\textrm{by Lemma \ref{identities}(3)}\\
& =& a \otimes 2 (h -g)\\
& =& 0.
\end{eqnarray*}
Thus, $\im(\partial_1)=0$, and hence $\Has_0(G, A)= A \otimes_G \Bas_0(G)= A$. The second assertion is obvious from the definition.
\end{proof}
\medskip

\subsection{Groups of order 2 and 3}
If $G=\langle g \mid g^2=1 \rangle$, then the complex \eqref{symmetric-homology-resolution} takes the form
\begin{equation*}
0        \longrightarrow \Bas_{1}(G) \stackrel{\partial_{1}}{\longrightarrow} \Bas_{0}(G) \stackrel{\partial_{0}}{\longrightarrow} 0.
\end{equation*}
Taking  $A=\mathbb{Z}[G]$ as a right $G$-module, by Proposition \ref{symm-homo-o-dim}, we obtain
$$
\Has_i(G, A)=
\left\{
\begin{array}{ll}
\mathbb{Z}[G]/2 \Delta(G)\cong \mathbb{Z} \oplus \mathbb{Z}_2  & \mbox{if}\,\, i=0, \\
0 & \mbox{if}\,\, i \ge 1. \\
\end{array}
\right.
$$
\medskip

If  $A$ is a trivial right $G$-module, then the chain complex \eqref{symmetric-homology-chain-complex} becomes
\begin{equation*}
0        \longrightarrow A \otimes_G \Bas_{1}(G) \stackrel{\partial_{1}}{\longrightarrow} A \otimes_G \Bas_{0}(G) \stackrel{\partial_{0}}{\longrightarrow} 0.
\end{equation*}
Notice that, for $a \in A$, we have
\begin{eqnarray*}
a \otimes \mu_1(1 \otimes g) & =& a \otimes (1 \otimes g -g \otimes 1)\\
& =& a \otimes (g(g \otimes 1) -g \otimes 1)\\
& =& 0,
\end{eqnarray*}
and hence $A \otimes_G \Bas_{1}(G)=0$. Thus, by Proposition \ref{symm-homo-o-dim-trivial}, we obtain
$$
\Has_i(G, A)=
\left\{
\begin{array}{ll}
A  & \mbox{if}\,\, i=0, \\
0 & \mbox{if}\,\, i \ge 1. \\
\end{array}
\right.
$$
\medskip

Next we consider the cyclic group $G=\langle g \mid g^3=1 \rangle$ of order 3, for which the chain complex is
\begin{equation*}
  0        \longrightarrow \Bas_{2}(G)      \stackrel{\partial_{2}}{\longrightarrow} \Bas_{1}(G) \stackrel{\partial_{1}}{\longrightarrow} \Bas_{0}(G) \stackrel{\partial_{0}}{\longrightarrow} 0.
\end{equation*}
Take  $A=\mathbb{Z}[G]$ as a right $G$-module. By Lemma \ref{identities}(4), $\im(\partial_2)=3 \Ker(\partial_1)$. A direct computation yields
$$\Ker(\partial_1)= \textrm{mod}_{\mathbb{Z}[G]} \big\langle \mu_1(1 \otimes g)+ \mu_1(g \otimes g^2) +\mu_1(g^2 \otimes 1)\big\rangle,$$ and $$g \big( \mu_1(1 \otimes g)+ \mu_1(g \otimes g^2) +\mu_1(g^2 \otimes 1)\big)= \mu_1(1 \otimes g)+ \mu_1(g \otimes g^2) +\mu_1(g^2 \otimes 1).$$
Thus, $\Ker(\partial_1) \cong \mathbb{Z}$, and hence $\Has_1(G, A)\cong \mathbb{Z}_3$. This together with Proposition \ref{symm-homo-o-dim}
gives
$$
\Has_i(G, A)=
\left\{
\begin{array}{ll}
\mathbb{Z}[G]/2 \Delta(G)\cong \mathbb{Z} \oplus \mathbb{Z}_2  & \mbox{if}\,\, i=0, \\
\mathbb{Z}_3 & \mbox{if}\,\, i = 1, \\
0 & \mbox{if}\,\, i \ge 2.
\end{array}
\right.
$$
\medskip

Finally, we consider an arbitrary trivial $G$-module  $A$. Then we have the chain complex
\begin{equation*}
0        \longrightarrow A \otimes_G \Bas_{2}(G)      \stackrel{\partial_{2}}{\longrightarrow}    A \otimes_G \Bas_{1}(G) \stackrel{\partial_{1}}{\longrightarrow} A \otimes_G \Bas_{0}(G) \stackrel{\partial_{0}}{\longrightarrow} 0,
\end{equation*}
where $ A \otimes_G \Bas_{i}(G) \cong A$ for $i=0,1,2$. For $a \in A$, we have
$$\partial_1 \big(a \otimes \mu_1(1 \otimes g)\big) =a \otimes 2(g -1)= 0,$$ which shows that $\im(\partial_1)=0$. Similarly, we obtain
\begin{eqnarray*}
\partial_2\big( a \otimes \mu_2(1 \otimes g \otimes g^2) \big)& =& a \otimes 3 \mu_1\partial_2(1 \otimes g \otimes g^2)\\
& =& 3a \otimes \big(\mu_1(1 \otimes g)+ \mu_1(g \otimes g^2) +\mu_1(g^2 \otimes 1)\big)\\
& =& 9a \otimes \mu_1(1 \otimes g).
\end{eqnarray*}
Thus, $a \otimes \mu_2(1 \otimes g \otimes g^2) \in \Ker(\partial_2)$ if and only if $9a=0$. Hence, the homology groups of $G$ are as follows
$$
\Has_i(G, A)=
\left\{
\begin{array}{ll}
A  & \mbox{if}\,\, i=0, \\
A/9A  & \mbox{if}\,\, i=1, \\
\textrm{Tor}_9(A)  & \mbox{if}\,\, i=2, \\
0 & \mbox{if}\,\, i \ge 3. \\
\end{array}
\right.
$$
\medskip


\section{Computations of exterior (co)homology}\label{sec7}

In this section, we compute exterior homology of some finite groups.

\subsection{Arbitrary finite group}
Let $G=\left\{ g_1,g_2,\ldots , g_n  \right\}$ be a finite group of order $n$, where $g_1=e$ is the identity element.
Then we have
$$
\bold\Lambda_{n-1}(G)= \mathrm{mod}_{\mathbb{Z}[G]}\left\langle
     \,  g_1\wedge g_2\wedge \cdots \wedge g_{n} \,
     \right\rangle.
$$

\begin{lemma} \label{l6}
$\bold\Lambda_{n-2}(G)= \mathrm{mod}_{\mathbb{Z}[G]}\left\langle
     \,  g_2\wedge g_3\wedge \cdots \wedge g_{n} \,  \right\rangle.$
\end{lemma}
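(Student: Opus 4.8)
The plan is to show that the single element $\omega := g_2 \wedge g_3 \wedge \cdots \wedge g_n$ already produces, under the $\mathbb{Z}[G]$-action, every member of a $\mathbb{Z}$-basis of $\bold\Lambda_{n-2}(G)$. First I would record that $\bold\Lambda_{n-2}(G) = \Lambda^{n-1}(\mathbb{Z}[G])$ is a free abelian group on the $n$ wedges obtained by deleting one factor from $g_1 \wedge g_2 \wedge \cdots \wedge g_n$, namely $\omega_j := g_1 \wedge \cdots \wedge \widehat{g_j} \wedge \cdots \wedge g_n$ for $1 \le j \le n$; this is immediate since $\mathbb{Z}[G]$ is $\mathbb{Z}$-free of rank $n$ on $g_1, \ldots, g_n$ and so $\Lambda^{n-1}$ is $\mathbb{Z}$-free of rank $\binom{n}{n-1}=n$ on these deletions. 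Note that $\omega = \omega_1$, so it suffices to realise each $\omega_j$ as a $\mathbb{Z}[G]$-multiple of $\omega$.

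The key step is to act on $\omega$ by the group elements themselves. Because the $G$-action on the exterior power is diagonal, $g_j \cdot \omega = (g_j g_2) \wedge (g_j g_3) \wedge \cdots \wedge (g_j g_n)$. Left multiplication by $g_j$ is a bijection of $G$, and it carries $\{g_2, \ldots, g_n\} = G \setminus \{e\}$ onto $G \setminus \{g_j\}$ (since $g_j e = g_j$ and $g_1 = e$). Hence the factors of $g_j \cdot \omega$ are precisely the elements of $G$ other than $g_j$, so after permuting them into the fixed order $g_1 < g_2 < \cdots < g_n$ we obtain $g_j \cdot \omega = \varepsilon_j\, \omega_j$ for some sign $\varepsilon_j \in \{\pm 1\}$.

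Finally I would conclude: for every $j$ we have $\omega_j = \varepsilon_j\,(g_j \cdot \omega) = (\varepsilon_j g_j) \cdot \omega$, and since $\varepsilon_j g_j \in \mathbb{Z}[G]$ this shows $\omega_j \in \mathrm{mod}_{\mathbb{Z}[G]}\langle \omega \rangle$. As the $\omega_j$ form a $\mathbb{Z}$-basis of $\bold\Lambda_{n-2}(G)$, the $\mathbb{Z}[G]$-submodule they generate is all of $\bold\Lambda_{n-2}(G)$, giving the claim. The only fiddly point is the determination of the signs $\varepsilon_j$ arising from reordering each wedge into increasing order, but these are immaterial for the generation statement because $-e \in \mathbb{Z}[G]$; I therefore do not expect any genuine obstacle beyond this routine bookkeeping.
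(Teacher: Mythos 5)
Your proof is correct and follows essentially the same route as the paper: both arguments act on $g_2\wedge\cdots\wedge g_n$ by group elements and observe that $g_j\cdot(G\setminus\{e\})=G\setminus\{g_j\}$, so each deleted wedge $g_1\wedge\cdots\wedge\widehat{g_j}\wedge\cdots\wedge g_n$ arises, up to sign, as $g_j$ times the proposed generator. Your version is if anything slightly more direct, since you identify the translate explicitly rather than arguing that the cosets $g_iG_1$ are pairwise distinct.
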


\begin{proof}
Let $G_1=\left\{ g_2, \ldots , g_n  \right\}$. We claim that $g_i G_1 \neq g_j G_1$  for  $i \neq j$. If  $g_i G_1 = g_j G_1$, then $g_j^{-1}g_i G_1 =  G_1$. Since $g_i \neq g_j$, then $g_j^{-1}g_i \neq  e$, and hence $g_i^{-1}g_j \in G_1$. The equality $G_1=g_j^{-1}g_i G_1$ implies that $e \in G_1$, which is a contradiction. Using the sets  $gG_1$, $g\in G$, one can write uniquely up to a sign all $n-1$ forms $g_1\wedge \cdots \wedge \widehat{g_i}\wedge \cdots \wedge g_{n}$ in $\bold\Lambda_{n-2}(G)$. The preceding argument shows that all these forms can be obtained from $g_2\wedge g_3\wedge \cdots \wedge g_{n}$ by multiplication by some element $g\in G$.
\end{proof}

Let us set
$$
\alpha:=g_1\wedge g_2\wedge \cdots \wedge g_{n}~\textrm{and}~ \beta:=g_2\wedge g_3\wedge \cdots \wedge g_{n}.
$$
Next we derive a formula for the boundary map
$$
\partial_{n-1}:\bold\Lambda_{n-1}(G) \longrightarrow \bold\Lambda_{n-2}(G).
$$
Let  $\kappa : G \longrightarrow \Sigma_n$ be the Cayley representation of $G$ given by
$\kappa (g)=\sigma$, where  $\sigma \in \Sigma_n$ and
$$\sigma (g_1,\ldots,g_n)=\big(g_{\sigma(1)},\ldots,g_{\sigma(n)}\big).
$$
Let $\pi : \Sigma_n  \longrightarrow \mathbb{Z}_2$ be the natural projection $$\pi(\sigma)= \mathrm{sign}(\sigma).$$
 A group $G$ is called {\it oriented} if the composition   $\pi \circ \kappa : G \longrightarrow \mathbb{Z}_2$
is the trivial homomorphism. If the composition  $\pi \circ \kappa : G  \longrightarrow \mathbb{Z}_2$
is a non-trivial homomorphism, then  $G$ is called {\it non-oriented}.
\par

Next we show that the definition does not depend on the linear order on the elements of  $G$. Let $N = \sum\limits_{i=1}^{n}g_i$ be the norm element in the integral group ring $\mathbb{Z}[G]$ of $G$. Then

\begin{theorem} \label{t2}
The following formula holds:
$$
\partial_{n-1}(\alpha)=
\left\{
\begin{array}{ll}
 N \beta  & \mbox{if}\,\, G \,\, \mbox{is oriented}, \\
  \left(\sum\limits_{i=1}^{n}\mathrm{sign}\big(\kappa(g_i)\big)g_i\right)\beta &
   \mbox{if}\,\, G \,\, \mbox{is non-oriented}. \\
\end{array}
\right.
$$
\end{theorem}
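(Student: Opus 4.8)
The plan is to compute $\partial_{n-1}(\alpha)$ directly from the formula
$$\partial_{n-1}(g_1 \wedge g_2 \wedge \cdots \wedge g_n) = \sum_{i=1}^{n} (-1)^{i-1} \big( g_1 \wedge \cdots \wedge \widehat{g_i} \wedge \cdots \wedge g_n \big),$$
and to reorganize the resulting sum so that each term is rewritten as a $\mathbb{Z}[G]$-multiple of the single generator $\beta = g_2 \wedge \cdots \wedge g_n$, using Lemma \ref{l6}. The key point is that Lemma \ref{l6} already tells us each hat-term $g_1 \wedge \cdots \wedge \widehat{g_i} \wedge \cdots \wedge g_n$ equals $\pm\, g \beta$ for a unique $g \in G$; the whole proof is really a careful bookkeeping of which group element $g$ and which sign appear for each $i$.

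**First I would** identify, for each index $i$, the element $g \in G$ with $g\beta = \pm(g_1 \wedge \cdots \wedge \widehat{g_i} \wedge \cdots \wedge g_n)$. Acting by $g$ permutes the ordered tuple $(g_1, \ldots, g_n)$ according to the Cayley permutation $\kappa(g)$, so $g\beta = g \cdot (g_2 \wedge \cdots \wedge g_n)$; to match the hat-term on the left one must track how the missing factor $g_1 = e$ in $\beta$ maps, and how the reordering contributes a sign $\mathrm{sign}(\kappa(g))$ in the exterior algebra. The natural candidate is to take $g = g_i$, since left multiplication by $g_i$ sends $e = g_1$ to $g_i$ and hence removes $g_i$ from the spanning list in the appropriate sense; then $g_i \beta$ equals the $i$-th hat term up to the product of the Koszul sign $(-1)^{i-1}$ coming from $\partial_{n-1}$ and the permutation sign $\mathrm{sign}(\kappa(g_i))$ coming from reordering the wedge into increasing index order.

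**The main obstacle**, and the step I expect to require the most care, is pinning down exactly how these two signs interact: the combinatorial sign $(-1)^{i-1}$ from the boundary formula must combine with $\mathrm{sign}(\kappa(g_i))$ so that the coefficient of $g_i \beta$ in $\partial_{n-1}(\alpha)$ comes out to precisely $\mathrm{sign}(\kappa(g_i))$. Once this sign computation is settled for a single term, summing over $i$ gives
$$\partial_{n-1}(\alpha) = \Big( \sum_{i=1}^{n} \mathrm{sign}\big(\kappa(g_i)\big)\, g_i \Big) \beta,$$
which is exactly the non-oriented formula.

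**Finally**, the oriented case is an immediate specialization: if $\pi \circ \kappa$ is trivial then $\mathrm{sign}(\kappa(g_i)) = 1$ for every $i$, so the coefficient collapses to $\sum_{i=1}^n g_i = N$, giving $\partial_{n-1}(\alpha) = N\beta$. I would also note that this computation simultaneously discharges the claim preceding the theorem that orientability is independent of the chosen linear order: the boundary map $\partial_{n-1}$ is intrinsic to the resolution, so the element $\sum_i \mathrm{sign}(\kappa(g_i)) g_i$ (up to the sign ambiguity in the choice of $\alpha$ and $\beta$) is determined independently of any ordering, and its coincidence with $N$ versus a proper subsum detects the oriented versus non-oriented dichotomy without reference to the enumeration $g_1, \ldots, g_n$.
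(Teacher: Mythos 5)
Your proposal is correct and follows essentially the same route as the paper: the paper likewise rewrites each hat-term as $(-1)^{i-1}\,\mathrm{sign}\big(\kappa(g_i)\big)\,g_i\beta$ --- by observing that $\alpha=\mathrm{sign}\big(\kappa(g_i)\big)\,g_i\alpha$ and moving $g_1$ into the $i$-th slot --- so that the boundary sign $(-1)^{i-1}$ cancels and the coefficient of $g_i\beta$ is exactly $\mathrm{sign}\big(\kappa(g_i)\big)$. The sign bookkeeping you flag as the delicate step is precisely this identity and it holds as you predict; the only cosmetic difference is that the paper treats the oriented case first rather than obtaining it as the specialization $\mathrm{sign}\big(\kappa(g_i)\big)\equiv 1$.
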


\begin{proof}
If $G$ is an oriented group, then for each  $i=1,\ldots,n$, we have
\begin{eqnarray*}
\alpha & = & g_1\wedge g_2\wedge \cdots \wedge g_{n}\\
&=&   g_i (g_1\wedge g_2\wedge \cdots \wedge g_{n})\\
&=& (-1)^{i-1} g_i ( g_2\wedge\cdots\wedge g_i \wedge g_1 \wedge g_{i+1} \wedge  \cdots \wedge g_{n}).
\end{eqnarray*}
Hence, we get
\begin{eqnarray*}
g_1\wedge \cdots \wedge \widehat{g_i}\wedge \cdots \wedge g_{n} & = & (-1)^{i-1} g_i ( g_2\wedge\cdots\wedge g_i \wedge  g_{i+1} \wedge  \cdots \wedge g_{n})\\
 & = & (-1)^{i-1} g_i\beta.
\end{eqnarray*}
From this, we obtain
\begin{eqnarray*}
\partial_{n-1}(\alpha) & = &   \sum\limits_{i=1}^{n}(-1)^{i-1} g_1\wedge \cdots \wedge \widehat{g_i}\wedge \cdots \wedge g_{n}\\
& = &        \sum\limits_{i=1}^{n}(-1)^{2(i-1)}g_i \beta \\
& = & N \beta.
\end{eqnarray*}
\par

If $G$ is a non-oriented group, then for each  $i=1,\ldots,n$ the following equality holds
\begin{eqnarray*}
\alpha &=& g_1\wedge g_2\wedge \cdots \wedge g_{n}\\
&=&   \mathrm{sign}\big(\kappa(g_i)\big)g_i (g_1\wedge g_2\wedge \cdots \wedge g_{n})\\
&=& (-1)^{i-1} \mathrm{sign}\big(\kappa(g_i)\big)g_i     ( g_2\wedge\cdots\wedge g_i \wedge g_1 \wedge g_{i+1} \wedge  \cdots \wedge g_{n}).
\end{eqnarray*}
Thus, we obtain
\begin{eqnarray*}
g_1\wedge \cdots \wedge \widehat{g_i}\wedge \cdots \wedge g_{n} &=& (-1)^{i-1} \mathrm{sign}\big(\kappa(g_i)\big)
     g_i ( g_2\wedge\cdots\wedge g_i \wedge  g_{i+1} \wedge  \cdots \wedge g_{n})\\
&=& (-1)^{i-1} \mathrm{sign}\big(\kappa(g_i)\big) g_i\beta.
\end{eqnarray*}
This gives
\begin{eqnarray*}
\partial_{n-1}(\alpha) &=&   \sum\limits_{i=1}^{n}(-1)^{i-1} g_1\wedge \cdots \wedge \widehat{g_i}\wedge \cdots \wedge g_{n}\\
&=&   \sum\limits_{i=1}^{n}(-1)^{2(i-1)}\mathrm{sign}\big(\kappa(g_i)\big)g_i \beta\\
&=&      \left(\sum\limits_{i=1}^{n}\mathrm{sign}\big(\kappa(g_i)\big)g_i\right)\beta,
\end{eqnarray*}
which is desired.
\end{proof}

\begin{corollary}
The following holds:
\begin{enumerate}
\item  A group being oriented (non-oriented) does not depend on the labelling of its elements.
\item If the group  $G$ is non-oriented, then  it has even order.
\end{enumerate}
\end{corollary}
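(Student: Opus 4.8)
The plan is to treat both parts through the single observation, already implicit in \thmref{t2}, that $\pi \circ \kappa \colon G \to \mathbb{Z}_2$ is a group homomorphism, being the composite of the Cayley homomorphism $\kappa$ with the sign homomorphism $\pi$. Everything then reduces to standard facts about homomorphisms into $\mathbb{Z}_2$ and the conjugation-invariance of the sign.

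For part (1), I would first fix the meaning of a labelling: it is an ordering $g_1, \ldots, g_n$ of the elements of $G$, and any second labelling has the form $g'_i = g_{\rho(i)}$ for a unique $\rho \in \Sigma_n$. Writing $\kappa$ and $\kappa'$ for the Cayley representations attached to the two labellings, the defining relations $g g_i = g_{\kappa(g)(i)}$ and $g g'_i = g'_{\kappa'(g)(i)}$ give, after substituting $g'_i = g_{\rho(i)}$, the identity $\kappa(g)\,\rho = \rho\,\kappa'(g)$, that is, $\kappa'(g) = \rho^{-1} \kappa(g)\, \rho$ for every $g \in G$. Thus changing the labelling conjugates the whole Cayley representation by the one fixed permutation $\rho$. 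Since the sign of a permutation is invariant under conjugation (equivalently, $\pi$ is a homomorphism into the abelian group $\mathbb{Z}_2$, so $\pi(\rho^{-1}\sigma\rho) = \pi(\sigma)$), I conclude $\pi \circ \kappa' = \pi \circ \kappa$ as maps $G \to \mathbb{Z}_2$. Hence one composite is trivial exactly when the other is, which is precisely the assertion that orientedness is independent of the labelling. Conceptually, the cleanest way to phrase this is that $\pi\circ\kappa$ sends $g$ to the sign of left translation by $g$ regarded as a permutation of the set $G$, and the sign of a permutation of a finite set needs no labelling to be defined; the conjugation computation is merely the concrete shadow of that fact.

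For part (2), I would use that $G$ is non-oriented precisely when the homomorphism $\pi \circ \kappa \colon G \to \mathbb{Z}_2$ is non-trivial. A non-trivial homomorphism into $\mathbb{Z}_2$ is necessarily surjective, since the only proper subgroup of $\mathbb{Z}_2$ is trivial; hence by the first isomorphism theorem $G/\Ker(\pi\circ\kappa) \cong \mathbb{Z}_2$, so $\Ker(\pi\circ\kappa)$ has index $2$ in $G$. Lagrange's theorem then yields $|G| = 2\,|\Ker(\pi\circ\kappa)|$, which is even.

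Neither step presents a genuine obstacle; the only point requiring care is the bookkeeping in part (1), namely getting the direction of the conjugation $\kappa'(g) = \rho^{-1}\kappa(g)\,\rho$ correct from the chosen convention for $\kappa$. Once conjugation-invariance of the sign is invoked, both conclusions follow at once.
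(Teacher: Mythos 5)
Your proposal is correct. For part (2) you argue exactly as the paper does: a non-trivial homomorphism to $\mathbb{Z}_2$ is surjective, its kernel has index $2$, and Lagrange gives even order. For part (1), however, you take a genuinely different route. The paper deduces labelling-independence from the boundary formula of Theorem \ref{t2}: relabelling only changes the signs of the forms $\alpha$ and $\beta$, while the (signed) norm element $\sum_i \mathrm{sign}\bigl(\kappa(g_i)\bigr)g_i$ appearing in $\partial_{n-1}(\alpha)$ is an invariant of $G$, so the dichotomy oriented/non-oriented read off from that formula cannot depend on the ordering. You instead work directly with the Cayley representation: a relabelling $g'_i = g_{\rho(i)}$ conjugates $\kappa$ into $\kappa'(g) = \rho^{-1}\kappa(g)\rho$, and since $\pi$ is a homomorphism into an abelian group, $\pi\circ\kappa' = \pi\circ\kappa$. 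Your argument is self-contained (it does not lean on Theorem \ref{t2} at all) and isolates the real reason the notion is well defined, namely that the sign of left translation by $g$ as a permutation of the finite set $G$ requires no ordering; your bookkeeping $\kappa(g)\rho = \rho\,\kappa'(g)$ checks out against the convention $gg_i = g_{\kappa(g)(i)}$. The paper's version is shorter only because Theorem \ref{t2} has already been proved; yours would work even if that theorem were absent.
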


\begin{proof}
(1) Since changing the labelling of the elements of the group $G$ only changes the signs of the forms  $\alpha$, $\beta$ and the sum $\sum\limits_{i=1}^{n}g_i$ is an invariant of  $G$, the result follows.
\par
(2) If $G$ is non-oriented, then there is an epimorphism of $G$ onto the cyclic group of order 2, and hence the order of $G$ is even.
\end{proof}

To understand $\partial_{n-1}:\bold\Lambda_{n-1}(G) \longrightarrow \bold\Lambda_{n-2}(G)$ we determine its kernel and image. Up to a sign we can assume that
$$
\partial_{n-1}(\alpha)= \left(\sum\limits_{i=1}^{n}(-1)^{i-1}g_i \right)\beta.
$$

\begin{theorem} \label{t3}
Let  $A$ be a right  $G$-module.
\begin{enumerate}
\item If $G$ is oriented, then
$$
\im \big(\partial_{n-1} : A\underrel{\otimes}{G}\bold\Lambda_{n-1}(G) \rightarrow A\underrel{\otimes}{G}\bold\Lambda_{n-2}(G)\big)=
\left\{
\, a \left(\sum\limits_{i=1}^{n}g_i\right)\otimes \beta\, \mid a \in A\,
\right\},
$$
\begin{eqnarray*}
\Ha_{n-1}^{\lambda}(G,A)&=& \Ker \big(\partial_{n-1} : A\underrel{\otimes}{G}\bold\Lambda_{n-1}(G)
\rightarrow A\underrel{\otimes}{G}\bold\Lambda_{n-2}(G)\big)\\
&=& \left\{ \, a \otimes \alpha\, \mid a \in A , \,\,a \left(\sum\limits_{i=1}^{n}g_i\right)\otimes \beta=0 \,\right\}.
\end{eqnarray*}
\item[]
\item If $G$ is non-oriented, then
$$
\im \big(\partial_{n-1} : A\underrel{\otimes}{G}\bold\Lambda_{n-1}(G) \rightarrow A\underrel{\otimes}{G}\bold\Lambda_{n-2}(G)\big)=
\left\{
\, a \left(\sum\limits_{i=1}^{n}(-1)^{i-1}g_i\right)\otimes \beta\, \mid a \in A\,
\right\},
$$
\begin{eqnarray*}
\Ha_{n-1}^{\lambda}(G,A) &=&  \Ker \big(\partial_{n-1} : A\underrel{\otimes}{G}\bold\Lambda_{n-1}(G)
\rightarrow A\underrel{\otimes}{G}\bold\Lambda_{n-2}(G)\big)\\
&=& \left\{ \, a \otimes \alpha\, \mid a \in A , \,\,a \left(\sum\limits_{i=1}^{n}(-1)^{i-1}g_i\right)\otimes \beta=0 \,\right\}.
\end{eqnarray*}
\end{enumerate}
\end{theorem}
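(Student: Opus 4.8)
The plan is to reduce the computation to the single generator supplied by \lemref{l6} and then read off everything from the boundary formula of \thmref{t2}. First I would record that the exterior complex terminates in degree $n-1$: since $\mathbb{Z}[G]$ is free of rank $n$, we have $\bold\Lambda_n(G)=\Lambda^{n+1}(\mathbb{Z}[G])=0$, so the incoming differential $\partial_n$ is zero and $\Ha_{n-1}^{\lambda}(G,A)=\Ker(\partial_{n-1})/\im(\partial_n)=\Ker(\partial_{n-1})$. This is what lets me identify the homology group with the kernel in both cases of the statement.

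Next I would normalise the elements of the two relevant tensor products. By \lemref{l6}, together with the preceding paragraph's remark on $\bold\Lambda_{n-1}(G)$, both $\bold\Lambda_{n-1}(G)$ and $\bold\Lambda_{n-2}(G)$ are cyclic $\mathbb{Z}[G]$-modules, generated by $\alpha$ and $\beta$ respectively. Using the right $G$-module structure of $A$ and the defining relation $ac\otimes x = a\otimes cx$ for $c\in\mathbb{Z}[G]$ over $A\otimes_G(-)$, an arbitrary element $\sum_k a_k\otimes c_k\alpha$ collapses to $\big(\sum_k a_k c_k\big)\otimes\alpha$. Hence every element of $A\otimes_G\bold\Lambda_{n-1}(G)$ is of the form $a\otimes\alpha$, and every element of $A\otimes_G\bold\Lambda_{n-2}(G)$ is of the form $a\otimes\beta$, for some $a\in A$. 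I expect this normalisation step, rather than the final read-off, to be the point requiring the most care, since one must verify that these presentations are genuinely exhaustive before the kernel and image can be described by a single parameter $a$.

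With these normalisations the effect of $\partial_{n-1}$ on a general element is immediate from \thmref{t2}. In the oriented case, $\partial_{n-1}(a\otimes\alpha)=a\otimes N\beta=a\big(\sum_{i=1}^{n}g_i\big)\otimes\beta$; in the non-oriented case, after adopting the labelling that normalises the boundary formula to $\partial_{n-1}(\alpha)=\big(\sum_{i=1}^{n}(-1)^{i-1}g_i\big)\beta$, one gets $\partial_{n-1}(a\otimes\alpha)=a\big(\sum_{i=1}^{n}(-1)^{i-1}g_i\big)\otimes\beta$. Letting $a$ range over $A$ gives exactly the stated descriptions of $\im(\partial_{n-1})$ in each of the two cases.

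Finally, for the kernel I would observe that $a\otimes\alpha\in\Ker(\partial_{n-1})$ precisely when the image computed above vanishes in $A\otimes_G\bold\Lambda_{n-2}(G)$, that is, when $a\big(\sum_{i=1}^{n}g_i\big)\otimes\beta=0$ in the oriented case and when $a\big(\sum_{i=1}^{n}(-1)^{i-1}g_i\big)\otimes\beta=0$ in the non-oriented case. Combining this with the identification $\Ha_{n-1}^{\lambda}(G,A)=\Ker(\partial_{n-1})$ established in the first paragraph yields the claimed formulas for $\Ha_{n-1}^{\lambda}(G,A)$, completing both parts of the theorem.
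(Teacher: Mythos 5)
Your proposal is correct and follows exactly the route the paper intends: the paper states Theorem~\ref{t3} without a separate proof, treating it as an immediate consequence of the vanishing $\bold\Lambda_n(G)=0$, the cyclicity of $\bold\Lambda_{n-1}(G)$ and $\bold\Lambda_{n-2}(G)$ from Lemma~\ref{l6}, and the boundary formula of Theorem~\ref{t2}. Your explicit normalisation of elements of $A\otimes_G\bold\Lambda_{n-1}(G)$ to the form $a\otimes\alpha$ is precisely the step the paper leaves implicit, and it is carried out correctly.
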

\bigskip

\begin{corollary}
Let $A$ be a trivial right $G$-module.
\begin{enumerate}
\item If  $G$ is oriented, then
$$
\im \big(\partial_{n-1} : A\underrel{\otimes}{G}\bold\Lambda_{n-1}(G) \rightarrow A\underrel{\otimes}{G} \bold\Lambda_{n-2}(G)\big)=
nA \otimes \beta,
$$

\begin{eqnarray*}
\Ha_{n-1}^{\lambda}(G,A) &=& \Ker \big(\partial_{n-1} : A\underrel{\otimes}{G}\bold\Lambda_{n-1}(G)
\rightarrow A\underrel{\otimes}{G}\bold\Lambda_{n-2}(G) \big)\\
&=&\big\{\, a \otimes \alpha\, \mid a \in A , \,\,na \otimes \beta=0 \,\big\},
\end{eqnarray*}
\item[]
\item If  $G$ is non-oriented, then
$$
\im \big(\partial_{n-1} : A\underrel{\otimes}{G}\bold\Lambda_{n-1}(G)  \rightarrow A\underrel{\otimes}{G}\bold\Lambda_{n-2}(G) \big)=0,
$$
\begin{eqnarray*}
\Ha_{n-1}^{\lambda}(G,A) &=& \Ker \big(\partial_{n-1} : A\underrel{\otimes}{G}\bold\Lambda_{n-1}(G)
\rightarrow A\underrel{\otimes}{G}\bold\Lambda_{n-2}(G) \big)\\
&=& A \otimes \alpha.
\end{eqnarray*}
\end{enumerate}
\end{corollary}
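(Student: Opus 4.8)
The plan is to obtain both assertions directly from Theorem \ref{t3} by feeding in the hypothesis that the $G$-action on $A$ is trivial, so that $ag=a$ for every $a\in A$ and $g\in G$. The only quantities appearing in Theorem \ref{t3} that are sensitive to the module structure are the images of the ring elements $\sum_{i=1}^{n}g_i$ and $\sum_{i=1}^{n}(-1)^{i-1}g_i$ acting on $A$. Hence it suffices to evaluate how these act under the trivial action and then to simplify the resulting tensors inside $A\underrel{\otimes}{G}\bold\Lambda_{n-2}(G)$, using the defining relation $a\otimes g\,x = ag\otimes x$ of the tensor product together with $\bold\Lambda_{n-2}(G)=\mathrm{mod}_{\mathbb{Z}[G]}\langle\beta\rangle$ from Lemma \ref{l6}.

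First I would treat the oriented case. Pushing each group element across the tensor via $a\otimes g_i\beta = ag_i\otimes\beta = a\otimes\beta$, I compute
$$\partial_{n-1}(a\otimes\alpha)= a\Big(\sum_{i=1}^{n}g_i\Big)\otimes\beta = \sum_{i=1}^{n}(ag_i)\otimes\beta = na\otimes\beta.$$
This identifies the image with $nA\otimes\beta$, and shows that $a\otimes\alpha$ lies in $\Ker(\partial_{n-1})$ precisely when $na\otimes\beta=0$, which is the stated description of $\Ha_{n-1}^{\lambda}(G,A)$.

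For the non-oriented case the identical substitution gives
$$\partial_{n-1}(a\otimes\alpha)= a\Big(\sum_{i=1}^{n}(-1)^{i-1}g_i\Big)\otimes\beta = \Big(\sum_{i=1}^{n}(-1)^{i-1}\Big)\,a\otimes\beta.$$
The step requiring care, and the only genuine obstacle, is the vanishing of the integer scalar $\sum_{i=1}^{n}(-1)^{i-1}$. Here the evenness of the order enters: since $G$ is non-oriented it has even order by the corollary following Theorem \ref{t2}, so $n$ is even and the alternating sum of $n$ terms $\pm 1$ is zero. Equivalently, in the un-normalised form the coefficient is $\sum_{i=1}^{n}\mathrm{sign}\big(\kappa(g_i)\big)$, and the elements mapping to $+1$ and to $-1$ under the nontrivial homomorphism $\pi\circ\kappa$ are equinumerous, so the sum cancels. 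I would also point out that this is exactly the normalisation fixed just before Theorem \ref{t3}, where the labelling was arranged so that $\partial_{n-1}(\alpha)=\big(\sum_{i=1}^{n}(-1)^{i-1}g_i\big)\beta$, guaranteeing the alternation. Consequently $\partial_{n-1}$ is the zero map, forcing the image to be $0$ and the kernel, hence $\Ha_{n-1}^{\lambda}(G,A)$, to be all of $A\otimes\alpha$.
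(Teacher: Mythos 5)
Your proposal is correct and follows exactly the route the paper intends: the corollary is stated as an immediate specialization of Theorem \ref{t3} to trivial modules (the paper gives no separate proof), and your computation of the scalar actions --- $a\big(\sum_i g_i\big)=na$ in the oriented case and the vanishing of $\sum_i \mathrm{sign}\big(\kappa(g_i)\big)$ in the non-oriented case via the index-two kernel of $\pi\circ\kappa$ --- supplies precisely the missing verification.
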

\bigskip

\subsection{Finite cyclic group}
Let  $G= \left\langle \, t  \mid  t^n=1 \, \right\rangle$ be a cyclic group of order $n$. Then its exterior chain complex is
$$
0 \longrightarrow \bold\Lambda_{n-1}(G) \stackrel{\partial_{n-1}}{\longrightarrow}
   \bold\Lambda_{n-2}(G) \stackrel{\partial_{n-2}}{\longrightarrow} \cdots
           \stackrel{\partial_{2}}{\longrightarrow} \bold\Lambda_{1}(G)
           \stackrel{\partial_{1}}{\longrightarrow} \bold\Lambda_{0}(G)
           \stackrel{\varepsilon}{\longrightarrow} \mathbb{Z}\longrightarrow 0,
$$
where $\bold\Lambda_{0}(G)  = \mathbb{Z}[G]$ and
$$\bold\Lambda_{k}(G)  =  \textrm{mod}_{\mathbb{Z}[G]} \big\langle     \,  1\wedge t^{p_1}\wedge \cdots \wedge t^{p_k} \,| \, 1 \leq p_1 < \cdots < p_k \leq n-1    \big\rangle$$
for $1 \le k \le n-1$.

\begin{lemma} \label{l4}
The following hold:
\begin{enumerate}
\item
$ 1\wedge t\wedge \cdots \wedge \widehat{t^{p}}\wedge \cdots \wedge t^{n-1}= (-1)^{p(n-p-1)} ~t^{p+1} (1\wedge t\wedge \cdots \wedge t^{n-2})$.
\item $\bold\Lambda_{n-2}(G) =     \mod_{\mathbb{Z}[G]}\left\langle     \,  1\wedge t\wedge \cdots \wedge t^{n-2} \,     \right\rangle.$
\end{enumerate}
\end{lemma}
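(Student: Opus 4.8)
The plan is to establish (1) by a direct computation of the diagonal $\mathbb{Z}[G]$-action followed by a sign count, and then to read off (2) as an immediate corollary.

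For (1), first I would unwind the right-hand side. Since $G$ acts diagonally on the exterior algebra, multiplication by $t^{p+1}$ sends $1\wedge t\wedge\cdots\wedge t^{n-2}$ to
\[
t^{p+1}\wedge t^{p+2}\wedge\cdots\wedge t^{p+n-1},
\]
where all exponents are reduced modulo $n$. The crucial remark is that the $n-1$ exponents $p+1,p+2,\ldots,p+n-1$ exhaust every residue modulo $n$ except $p$ (the absent residue being $p+n\equiv p$). Hence the wedge factors occurring on the two sides coincide as a set, namely $\{1,t,\ldots,t^{n-1}\}\setminus\{t^p\}$; only their order differs, and since the factors are pairwise distinct the form is genuinely nonzero.

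The core of the argument is therefore the sign. I would write the reduced list as the block $B_1=(t^{p+1},\ldots,t^{n-1})$ of length $n-1-p$ followed by the block $B_2=(1,t,\ldots,t^{p-1})$ of length $p$, and observe that arranging the exponents in increasing order is exactly the interchange of $B_2$ past $B_1$. Swapping two adjacent blocks of lengths $a$ and $b$ in a wedge introduces the sign $(-1)^{ab}$, which here is $(-1)^{p(n-p-1)}$, yielding the asserted identity. The main obstacle is confined to this bookkeeping: one must check that the exponents are distinct modulo $n$ (so that no repeated factor kills the wedge) and apply the block-swap sign in the correct direction, the latter being harmless since $(-1)^{p(n-p-1)}$ is its own inverse.

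For (2), I would observe that the $\mathbb{Z}[G]$-generators of $\bold\Lambda_{n-2}(G)$ recorded above are precisely the forms $1\wedge t\wedge\cdots\wedge\widehat{t^p}\wedge\cdots\wedge t^{n-1}$ for $p=1,\ldots,n-1$, each deleting one exponent from $\{1,\ldots,n-1\}$ while retaining the leading factor $1$. By part (1) every such generator equals $\pm\,t^{p+1}$ times the single form $1\wedge t\wedge\cdots\wedge t^{n-2}$. Consequently all generators lie in the cyclic $\mathbb{Z}[G]$-submodule generated by $1\wedge t\wedge\cdots\wedge t^{n-2}$, which proves that this one form generates $\bold\Lambda_{n-2}(G)$ over $\mathbb{Z}[G]$.
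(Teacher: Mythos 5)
Your proposal is correct and follows essentially the same route as the paper: both arguments reduce the identity in (1) to a single adjacent block swap of lengths $p$ and $n-p-1$ (the paper rewrites $t^i$ as $t^{n+i}$ for $i<p$ and moves that block to the right, while you expand $t^{p+1}(1\wedge t\wedge\cdots\wedge t^{n-2})$ and reorder; the two computations are mirror images and give the same sign $(-1)^{p(n-p-1)}$), and (2) is deduced from (1) exactly as in the paper, where you merely spell out the observation that every listed $\mathbb{Z}[G]$-generator of $\bold\Lambda_{n-2}(G)$ is of the form treated in (1).
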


\begin{proof}
For assertion (1), we compute
\begin{eqnarray*}
  1\wedge t\wedge \cdots \wedge \widehat{t^{p}}\wedge \cdots \wedge t^{n-1} &=&    t^n\wedge t^{n+1}\wedge \cdots \wedge t^{n+p-1}\wedge t^{p+1} \wedge \cdots \wedge t^{n-1}\\
 &=&   (-1)^{p(n-p-1)}  (t^{p+1} \wedge \cdots \wedge t^{n-1} \wedge t^n\wedge t^{n+1}\wedge \cdots \wedge t^{n+p-1})\\
 &=&     (-1)^{p(n-p-1)}t^{p+1}( 1\wedge t\wedge \cdots \wedge t^{n-2}).
\end{eqnarray*}
Assertion (2) follows from (1).
\end{proof}

\medskip

Let $A$ be a right $G$-module. Since  $\bold\Lambda_{n}(G) =0$, we have
$$
\Ha_{n-1}^{\lambda}(G,A)=
\Ker \big(\partial_{n-1} : A\underrel{\otimes}{G}\bold\Lambda_{n-1}(G) \rightarrow A\underrel{\otimes}{G}\bold\Lambda_{n-2}(G) \big).
$$

\begin{lemma} \label{l5}
The following formula holds
$$
\partial_{n-1}(1\wedge t\wedge \cdots  \wedge t^{n-1})=
  \left( \sum\limits_{p=0}^{n-1}(-1)^{p(n-p)}t^{p+1} \right)( 1\wedge t\wedge \cdots  \wedge t^{n-2} ).
$$
\end{lemma}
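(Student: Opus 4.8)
The plan is to apply the induced boundary-map formula (the first lemma of \secref{sec4}) directly to $\alpha = 1 \wedge t \wedge \cdots \wedge t^{n-1}$ and then reduce every resulting face to a left $G$-multiple of the single generator $1 \wedge t \wedge \cdots \wedge t^{n-2}$ of $\bold\Lambda_{n-2}(G)$ provided by \lemref{l4}(2). Indexing the factors by $g_p = t^p$ for $0 \le p \le n-1$, the boundary formula gives
$$
\partial_{n-1}(1 \wedge t \wedge \cdots \wedge t^{n-1}) = \sum_{p=0}^{n-1} (-1)^p \big(1 \wedge t \wedge \cdots \wedge \widehat{t^{p}} \wedge \cdots \wedge t^{n-1}\big),
$$
so the whole computation is reduced to understanding the individual $p$-th faces.

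Next I would substitute \lemref{l4}(1), which rewrites the $p$-th face as $(-1)^{p(n-p-1)}\, t^{p+1}\big(1 \wedge t \wedge \cdots \wedge t^{n-2}\big)$. Because each summand is now a left $G$-multiple of the common element $1 \wedge t \wedge \cdots \wedge t^{n-2}$, that element can be pulled out of the sum, leaving only the scalar coefficients in $\mathbb{Z}[G]$ to be assembled. Concretely, the $p$-th term contributes the factor $(-1)^p \cdot (-1)^{p(n-p-1)}\, t^{p+1}$.

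The only step needing real care — the sole obstacle, such as it is — is the sign bookkeeping, since the rest is a direct substitution. One verifies that $p + p(n-p-1) = p(n-p)$, so the two signs combine to $(-1)^{p(n-p)}$, exactly the coefficient claimed in the statement. Factoring out $1 \wedge t \wedge \cdots \wedge t^{n-2}$ then yields
$$
\partial_{n-1}(1 \wedge t \wedge \cdots \wedge t^{n-1}) = \left( \sum_{p=0}^{n-1}(-1)^{p(n-p)} t^{p+1} \right)\big( 1\wedge t\wedge \cdots \wedge t^{n-2} \big),
$$
which is the desired formula. Note that \lemref{l4}(2) guarantees the right-hand side genuinely lies in $\bold\Lambda_{n-2}(G)$, so no well-definedness issue arises.
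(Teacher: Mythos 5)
Your proposal is correct and follows exactly the paper's own argument: expand $\partial_{n-1}$ via the induced boundary formula, substitute Lemma~\ref{l4}(1) for each face, and combine the signs using $p + p(n-p-1) = p(n-p)$. No differences worth noting.
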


\begin{proof}
We directly compute
\begin{eqnarray*}
\partial_{n-1}(1\wedge t\wedge \cdots  \wedge t^{n-1}) & = &
  \sum\limits_{p=0}^{n-1}(-1)^{p}    ( 1\wedge t\wedge \cdots \wedge \widehat{t^{p}}\wedge \cdots \wedge t^{n-1} )\\
& = & \left( \sum\limits_{p=0}^{n-1}(-1)^{p}(-1)^{p(n-p-1)} t^{p+1} \right)    ( 1\wedge t\wedge \cdots \wedge \cdots \wedge t^{n-2} )\\
& = & \left( \sum\limits_{p=0}^{n-1}(-1)^{p(n-p)}t^{p+1} \right)( 1\wedge t\wedge \cdots  \wedge t^{n-2} ).
\end{eqnarray*}
\end{proof}

Note that if  $n \equiv 1~(\mathrm{mod}~2)$, then $p(n-p) \equiv 0~(\mathrm{mod}~2)$ for
all $p=0,1,\ldots, n-1$. Similarly, if $n \equiv 0~(\mathrm{mod}~2)$, then $p(n-p) \equiv p^2 \equiv p~(\mathrm{mod}~2)$ for
all $p=0,1,\ldots, n-1$. Thus,
$$
\sum\limits_{p=0}^{n-1}(-1)^{p(n-p)}t^{p+1}=
\left\{
\begin{array}{lll}
  \sum\limits_{k=0}^{n-1} t^k           & \mbox{if} & n \equiv 1~(\mathrm{mod}~2 ), \\
  &&\\
  \sum\limits_{k=0}^{n-1} (-1)^{k+1}t^k & \mbox{if} & n \equiv 1~(\mathrm{mod}~ 2 ). \\
\end{array}
\right.
$$
Hence, we have
$$
\partial_{n-1}(1\wedge t\wedge \cdots  \wedge t^{n-1})=
\left\{
\begin{array}{lll}
  \left(\sum\limits_{k=0}^{n-1} t^k\right)( 1\wedge t\wedge \cdots  \wedge t^{n-2} )           & \mbox{if} & n \equiv 1~(\mathrm{mod}~2 ), \\
  & &\\
  \left(\sum\limits_{k=0}^{n-1} (-1)^{k+1}t^k\right)( 1\wedge t\wedge \cdots  \wedge t^{n-2} ) & \mbox{if} & n \equiv 1~(\mathrm{mod}~2 ). \\
\end{array}
\right.
$$
The preceding formula for the map $\partial_{n-1} : A\underrel{\otimes}{G}\bold\Lambda_{n-1}(G)  \longrightarrow A\underrel{\otimes}{G}\bold\Lambda_{n-2}(G)
$ gives

\begin{prop} \label{p2}
Let $A$ be a right $G$-module.
\begin{enumerate}
\item If $n \equiv 1~(\mathrm{mod}~2)$, then
$$
\im \big(\partial_{n-1} : A\underrel{\otimes}{G}\bold\Lambda_{n-1}(G)  \rightarrow A\underrel{\otimes}{G}\bold\Lambda_{n-2}(G) \big)=
\left\{
\, \left(a \sum\limits_{k=0}^{n-1} t^k\right)\otimes ( 1\wedge t\wedge \cdots  \wedge t^{n-2} )\, \mid a \in A\,
\right\},
$$
\begin{eqnarray*}
\Ha_{n-1}^{\lambda}(G,A) &=& \Ker \big(\partial_{n-1} : A\underrel{\otimes}{G}\bold\Lambda_{n-1}(G)
\rightarrow A\underrel{\otimes}{G}\bold\Lambda_{n-2}(G) \big)\\
&=&  \left\{\, a \otimes ( 1\wedge t\wedge \cdots  \wedge t^{n-1} )\, \mid a \in A , \,\,\left(a \sum\limits_{k=0}^{n-1} t^k\right)\otimes ( 1\wedge t\wedge \cdots  \wedge t^{n-2} )=0 \,\right\}.
\end{eqnarray*}
\item If $n \equiv 0~(\mathrm{mod}~2)$, then
$$
\im \big(\partial_{n-1} : A\underrel{\otimes}{G}\bold\Lambda_{n-1}(G)  \rightarrow A\underrel{\otimes}{G}\bold\Lambda_{n-2}(G) \big)=
\left\{
\, \left(a \sum\limits_{k=0}^{n-1} (-1)^{k+1}t^k\right)\otimes ( 1\wedge t\wedge \cdots  \wedge t^{n-2} )\, \mid a \in A\,
\right\},
$$
\begin{eqnarray*}
\Ha_{n-1}^{\lambda}(G,A)&=& \left\{ \, a \otimes ( 1\wedge t\wedge \cdots  \wedge t^{n-1} )\, \mid a \in A , \,\, \left(a \sum\limits_{k=0}^{n-1}(-1)^{k+1} t^k\right)\otimes ( 1\wedge t\wedge \cdots  \wedge t^{n-2} )=0 \,\right\}.
\end{eqnarray*}
\end{enumerate}
\end{prop}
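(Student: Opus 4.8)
The plan is to compute the image and the kernel of $\partial_{n-1}$ directly from the explicit boundary formula already in hand. The crucial preliminary remark is that, since $G$ has order $n$, we have $\bold\Lambda_n(G)=0$, so no boundary reaches $\bold\Lambda_{n-1}(G)$ from above. Hence the top exterior homology is simply the kernel,
$$
\Ha_{n-1}^{\lambda}(G,A)=\Ker\big(\partial_{n-1}:A\underrel{\otimes}{G}\bold\Lambda_{n-1}(G)\to A\underrel{\otimes}{G}\bold\Lambda_{n-2}(G)\big),
$$
exactly as recorded just before the statement. Thus it suffices to understand this single map on generators.

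First I would pin down the two tensor groups. Since $\bold\Lambda_{n-1}(G)=\mathrm{mod}_{\mathbb{Z}[G]}\langle\, 1\wedge t\wedge\cdots\wedge t^{n-1}\,\rangle$ is cyclic as a $\mathbb{Z}[G]$-module generated by $\alpha:=1\wedge t\wedge\cdots\wedge t^{n-1}$, every element of $A\underrel{\otimes}{G}\bold\Lambda_{n-1}(G)$ can be written as $a\otimes\alpha$ for some $a\in A$: any group-ring coefficient is absorbed into the left factor via $a\otimes g\alpha=ag\otimes\alpha$. By Lemma~\ref{l4}(2) the same holds one degree down, so every element of $A\underrel{\otimes}{G}\bold\Lambda_{n-2}(G)$ has the form $a\otimes\beta$ with $\beta:=1\wedge t\wedge\cdots\wedge t^{n-2}$. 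Applying Lemma~\ref{l5} and the parity simplification of $\sum_{p=0}^{n-1}(-1)^{p(n-p)}t^{p+1}$ recorded above, and then moving the resulting group-ring coefficient across the tensor symbol, I obtain $\partial_{n-1}(a\otimes\alpha)=\big(a\sum_{k=0}^{n-1}t^{k}\big)\otimes\beta$ when $n$ is odd, and $\partial_{n-1}(a\otimes\alpha)=\big(a\sum_{k=0}^{n-1}(-1)^{k+1}t^{k}\big)\otimes\beta$ when $n$ is even. The image is then precisely the set of these elements as $a$ ranges over $A$, which is the stated $\im(\partial_{n-1})$ in each parity case; the kernel is the set of $a\otimes\alpha$ for which the displayed output vanishes, giving the stated description of $\Ha_{n-1}^{\lambda}(G,A)$.

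The only point demanding care---rather than a genuine obstacle---is the passage across the tensor product: one must apply the right-module relation $ag\otimes m=a\otimes gm$ consistently, and bear in mind that the parametrization $a\mapsto a\otimes\alpha$ need not be injective. Consequently the conditions defining the image and kernel are conditions on the classes $a\otimes\alpha$ and $\big(\sum_k\pm t^{k}\big)a\otimes\beta$ inside the tensor groups, not on the element $a\in A$ itself; phrasing them exactly as in the statement is what keeps the descriptions correct. With this caveat, both formulas follow by simply reading off the boundary computation, and no further calculation is required.
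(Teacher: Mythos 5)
Your proposal is correct and follows exactly the route the paper intends: the proposition is stated as an immediate consequence of Lemma~\ref{l5} together with the parity simplification of $\sum_{p=0}^{n-1}(-1)^{p(n-p)}t^{p+1}$, combined with the observation that $\bold\Lambda_n(G)=0$ forces $\Ha_{n-1}^{\lambda}(G,A)=\Ker(\partial_{n-1})$. Your added care about cyclicity of the modules and the non-injectivity of the parametrization $a\mapsto a\otimes\alpha$ only makes explicit what the paper leaves implicit.
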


\begin{prop} \label{p3}
Let  $A$ be a trivial $G$-module.
\begin{enumerate}
\item If $n \equiv 1~(\mathrm{mod}~2 )$, then
$$
\im \big(\partial_{n-1} : A\underrel{\otimes}{G}\bold\Lambda_{n-1}(G)  \rightarrow A\underrel{\otimes}{G}\bold\Lambda_{n-2}(G) \big)=
\left\{
\, n a \otimes ( 1\wedge t\wedge \cdots  \wedge t^{n-2} )\, |
\, a \in A\,
\right\},
$$
\begin{eqnarray*}
\Ha_{n-1}^{\lambda}(G,A) &=& \Ker \big(\partial_{n-1} : A\underrel{\otimes}{G}\bold\Lambda_{n-1}(G)
\rightarrow A\underrel{\otimes}{G}\bold\Lambda_{n-2}(G) \big)\\
 &=&
\left\{\, a \otimes ( 1\wedge t\wedge \cdots  \wedge t^{n-1} )\, \mid a \in A , \,\,na \otimes ( 1\wedge t\wedge \cdots  \wedge t^{n-2} )=0 \,\right\}.
\end{eqnarray*}
\item[]
\item If $n \equiv 0~(\mathrm{mod}~2 )$, then
$$
\im \big(\partial_{n-1} : A\underrel{\otimes}{G}\bold\Lambda_{n-1}(G)  \rightarrow A\underrel{\otimes}{G}\bold\Lambda_{n-2}(G) \big)=0,
$$
\begin{eqnarray*}
\Ha_{n-1}^{\lambda}(G,A)&=& \Ker \big(\partial_{n-1} : A\underrel{\otimes}{G}\bold\Lambda_{n-1}(G)
\rightarrow A\underrel{\otimes}{G}\bold\Lambda_{n-2}(G) \big)\\
&=& \big\{ a \otimes ( 1\wedge t\wedge \cdots  \wedge t^{n-1} )\, \mid a \in A , \,\,na \otimes ( 1\wedge t\wedge \cdots  \wedge t^{n-2} )=0 \big\}.
\end{eqnarray*}
\end{enumerate}
\end{prop}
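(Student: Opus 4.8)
The plan is to obtain the whole statement as the specialization of Proposition~\ref{p2} to a trivial action; the only new ingredient is that, once $t$ acts as the identity on $A$, the group-ring coefficients appearing there collapse to integers. Throughout write $\alpha=1\wedge t\wedge\cdots\wedge t^{n-1}$ and $\beta=1\wedge t\wedge\cdots\wedge t^{n-2}$. Since $A$ is trivial, $at^k=a$ for all $a\in A$ and all $k$, and hence $a\otimes(t^k\beta)=(at^k)\otimes\beta=a\otimes\beta$ in $A\otimes_G\bold\Lambda_{n-2}(G)$. Consequently the action of any $\sum_k c_k t^k\in\mathbb{Z}[G]$ on $a\otimes\beta$ is simply multiplication by the integer $\sum_k c_k$.

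First I would treat the odd case. By Proposition~\ref{p2}(1) the boundary carries $a\otimes\alpha$ to $\big(a\sum_{k=0}^{n-1}t^k\big)\otimes\beta$, and by the remark above this equals $na\otimes\beta$. This is exactly the asserted image, and intersecting with the kernel condition reproduces the stated description of $\Ha_{n-1}^{\lambda}(G,A)$ verbatim.

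For the even case I would again start from Proposition~\ref{p2}(2), where $\partial_{n-1}(a\otimes\alpha)=\big(a\sum_{k=0}^{n-1}(-1)^{k+1}t^k\big)\otimes\beta$. Collapsing the action replaces the coefficient by the scalar $\sum_{k=0}^{n-1}(-1)^{k+1}$, whose summands pair off as $-1,+1,-1,+1,\dots$ and therefore sum to $0$ because $n$ is even. Hence $\partial_{n-1}$ is identically zero: its image is $0$, and its kernel is the whole group $A\otimes_G\bold\Lambda_{n-1}(G)=\{a\otimes\alpha\mid a\in A\}$. Equivalently, the kernel condition inherited from Proposition~\ref{p2}(2) degenerates to $0\otimes\beta=0$, which holds for every $a$.

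The single delicate point is the parity bookkeeping: the conclusion hinges on the coefficient in the even case being the alternating sum $\sum(-1)^{k+1}$ (which cancels) rather than the plain sum $\sum 1=n$ (which does not). This is precisely the computation recorded just before the proposition, where the sign $(-1)^{p(n-p)}$ was identified with $1$ for odd $n$ and with $(-1)^{k+1}$ for even $n$; once that is invoked, both cases are immediate.
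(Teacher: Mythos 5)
Your proof is correct and is essentially the paper's own argument: the paper likewise obtains Proposition~\ref{p3} by specializing the formula of Lemma~\ref{l5} (equivalently Proposition~\ref{p2}) to a trivial action, where $\sum_{k}(-1)^{p(n-p)}t^{p+1}$ collapses to the integer $n$ for odd $n$ and to the alternating sum $0$ for even $n$. One remark: in case (2) your computation correctly shows that $\partial_{n-1}=0$, so the kernel is \emph{all} of $A\otimes_G\bold\Lambda_{n-1}(G)=\{a\otimes(1\wedge t\wedge\cdots\wedge t^{n-1})\mid a\in A\}$; the side condition ``$na\otimes(1\wedge t\wedge\cdots\wedge t^{n-2})=0$'' printed in the statement of part (2) is therefore superfluous (indeed inconsistent with the first assertion that the image vanishes unless it holds automatically), and the paper's own explicit computation for the cyclic group of order $4$ confirms your version, listing $\Ha_3^{\lambda}(G,A)=\{a\otimes(1\wedge g\wedge g^2\wedge g^3)\mid a\in A\}$ with no condition on $a$.
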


Note that if  $n \equiv 0~(\mathrm{mod}~2)$, then
$$
a \otimes t( 1\wedge t\wedge \cdots  \wedge t^{n-1} )=
a \otimes (-1)^{n-1}( 1\wedge t\wedge \cdots  \wedge t^{n-1} )=
-a \otimes ( 1\wedge t\wedge \cdots  \wedge t^{n-1} ).
$$
In particular, for a trivial  $G$-module $A$, we have
$$
2a \otimes t( 1\wedge t\wedge \cdots  \wedge t^{n-1} )=0.
$$
Hence, if $A$ is a trivial $G$-module, then $\Ha_{n-1}^{\lambda}(G,A)$  is homomorphic image of the group  $A/2A$.
\par

\begin{conjecture}
Let $G$ be a cyclic group of order $n$ and $A$ a trivial  $G$-module.
\begin{enumerate}
\item If $n \equiv 0~(\mathrm{mod}~2)$, then
$$
\Ha_{n-1}^{\lambda}(G,A)\cong A/2A,
$$
\item If $n \equiv 1~(\mathrm{mod}~2)$, then
$$
\Ha_{n-1}^{\lambda}(G,A)\cong \Ker (\varphi_n : A \rightarrow A),
$$
where $\varphi_n (a)=na$ for $a\in A$.
\end{enumerate}
\end{conjecture}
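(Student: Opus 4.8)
The plan is to determine the $\mathbb{Z}[G]$-module structure of the top two exterior powers $\bold\Lambda_{n-1}(G)$ and $\bold\Lambda_{n-2}(G)$, use these to identify the terms $A\otimes_G\bold\Lambda_{n-1}(G)$ and $A\otimes_G\bold\Lambda_{n-2}(G)$ explicitly for a trivial module $A$, and then read off $\Ha_{n-1}^{\lambda}(G,A)$. Since $\bold\Lambda_n(G)=0$, the top homology is simply $\Ker(\partial_{n-1})$, and \propref{p3} already isolates the relevant data; the remaining task is to make the source and target groups concrete.

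First I would treat $\bold\Lambda_{n-1}(G)=\mathrm{mod}_{\mathbb{Z}[G]}\langle\alpha\rangle$ with $\alpha=1\wedge t\wedge\cdots\wedge t^{n-1}$. Because $t\alpha=t\wedge t^2\wedge\cdots\wedge t^{n-1}\wedge 1$ and returning the factor $1$ to the front costs $n-1$ transpositions, one finds $t\alpha=(-1)^{n-1}\alpha$. Hence $\bold\Lambda_{n-1}(G)\cong\mathbb{Z}$ with $t$ acting trivially if $n$ is odd and by $-1$ if $n$ is even. For a trivial module $A$ the relation $a\otimes\alpha=a\otimes t\alpha=(-1)^{n-1}(a\otimes\alpha)$ then gives $A\otimes_G\bold\Lambda_{n-1}(G)\cong A$ when $n$ is odd and $A\otimes_G\bold\Lambda_{n-1}(G)\cong A/2A$ when $n$ is even (the surjection $a\mapsto a\otimes\alpha$ has kernel $2A$ by right exactness), with $a\otimes\alpha\leftrightarrow a$ in both cases.

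Next I would show $\bold\Lambda_{n-2}(G)\cong\mathbb{Z}[G]$ as a $G$-module. By \lemref{l4}, $\beta=1\wedge t\wedge\cdots\wedge t^{n-2}$ generates $\bold\Lambda_{n-2}(G)$, and the $n$ elements $t^{j}\beta$ for $0\le j\le n-1$ are, up to sign, precisely the standard basis wedges $1\wedge\cdots\wedge\widehat{t^{p}}\wedge\cdots\wedge t^{n-1}$ of $\Lambda^{n-1}(\mathbb{Z}[G])$. Thus they form a $\mathbb{Z}$-basis on which $t$ acts by the cyclic permutation $t^{j}\beta\mapsto t^{j+1}\beta$ (with $t^{n}\beta=\beta$), so $\bold\Lambda_{n-2}(G)\cong\mathbb{Z}[G]$ via $t^{j}\beta\leftrightarrow t^{j}$. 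For trivial $A$ this yields $A\otimes_G\bold\Lambda_{n-2}(G)\cong A$, with $a\otimes t^{j}\beta\mapsto a$ for every $j$.

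It then remains to transport $\partial_{n-1}$ through these identifications. By \lemref{l5}, $\partial_{n-1}(\alpha)=S\beta$, where $S=\sum_{p=0}^{n-1}(-1)^{p(n-p)}t^{p+1}$ equals the norm $N$ for $n$ odd and $\sum_{k=0}^{n-1}(-1)^{k+1}t^{k}$ for $n$ even. Since $a\otimes t^{j}\beta\mapsto a$ independently of $j$, the induced boundary map becomes multiplication by the augmentation $\varepsilon(S)$, which equals $n$ for $n$ odd and $0$ for $n$ even. For $n$ even this gives $\partial_{n-1}=0$, hence $\Ha_{n-1}^{\lambda}(G,A)=A\otimes_G\bold\Lambda_{n-1}(G)\cong A/2A$, establishing (1). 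For $n$ odd the map is $\varphi_n\colon A\to A$, $a\mapsto na$, and since $\Ha_{n-1}^{\lambda}(G,A)=\Ker(\partial_{n-1})$, we obtain $\Ha_{n-1}^{\lambda}(G,A)\cong\Ker\varphi_n$, establishing (2). The step needing the most care is the identification $\bold\Lambda_{n-2}(G)\cong\mathbb{Z}[G]$: one must check both that the $t^{j}\beta$ are genuinely $\mathbb{Z}$-independent, so that the target is $A$ rather than a proper quotient, and that $t$ permutes them without extraneous signs, after which the augmentation computation and the reading of the kernel are routine.
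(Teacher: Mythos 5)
The statement you are addressing appears in the paper only as a \emph{conjecture}: the authors stop at \propref{p3}, which describes $\Ha_{n-1}^{\lambda}(G,A)$ as a set of tensors subject to a condition and, in the even case, records merely an epimorphism $A/2A \longrightarrow \Ha_{n-1}^{\lambda}(G,A)$. Your argument goes beyond this and, as far as I can verify, actually settles the conjecture (for $n\ge 2$; for $n=1$ the statement as literally written fails, since $\Ha_{0}^{\lambda}$ of the trivial group is $A$ rather than $\Ker \varphi_1$, but that degenerate case is clearly not intended). The two ingredients you supply beyond what the paper proves are precisely the right ones. First, $\bold\Lambda_{n-1}(G)=\Lambda^{n}(\mathbb{Z}[G])$ is infinite cyclic on $\alpha$ with $t\alpha=(-1)^{n-1}\alpha$, so for a trivial module the relations $ag\otimes m = a\otimes gm$ give $A\otimes_G\bold\Lambda_{n-1}(G)\cong A$ for $n$ odd and $\cong A/2A$ for $n$ even, the kernel of $a\mapsto a\otimes\alpha$ being exactly $2A$ in the even case. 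Second, $\bold\Lambda_{n-2}(G)$ is \emph{free} of rank one on $\beta$: by \lemref{l4}(1) the elements $t^{j}\beta$ agree up to sign with the $n$ standard basis wedges, and the exponents $p+1$ exhaust all residues modulo $n$, so $A\otimes_G\bold\Lambda_{n-2}(G)\cong A$ with $a\otimes t^{j}\beta\mapsto a$ for every $j$. With these identifications, \lemref{l5} turns $\partial_{n-1}$ into multiplication by $\varepsilon(S)$, which is $n$ for $n$ odd and $0$ for $n$ even, and since $\bold\Lambda_{n}(G)=0$ the top homology is the kernel, yielding $\Ker\varphi_n$ and $A/2A$ respectively. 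The freeness of $\bold\Lambda_{n-2}(G)$, which you rightly single out as the delicate point, is exactly what upgrades the paper's condition ``$na\otimes\beta=0$'' to ``$na=0$'' and its epimorphism $A/2A\to\Ha_{n-1}^{\lambda}(G,A)$ to an isomorphism; I find no gap in the argument.
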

\medskip

\subsection{Cyclic groups of order 3 and 4}

Next we compute exterior homology of cyclic groups of order 3 and 4.

\begin{prop}
If $G=\left\langle \, g \mid \, g^3=1 \,  \right\rangle$, then
\begin{eqnarray*}
\Ha_{0}^{\lambda}(G,A)&\cong & A/(A\, \Delta[G]) \cong A_G,\\
\Ha_{1}^{\lambda}(G,A)&\cong & A^G /A(1+g+g^2),\\
\Ha_{2}^{\lambda}(G,A) &= & \big\{  a\in A \mid  a(1+g+g^2)=0 \big\}.
\end{eqnarray*}
In particular, if $A$ is a trivial right $G$-module, then
\begin{eqnarray*}
\Ha_{0}^{\lambda}(G,A) &\cong & A,\\
\Ha_{1}^{\lambda}(G,A)&\cong & A /3A,\\
\Ha_{2}^{\lambda}(G,A) &= &\mathrm{Tor}_3(A)= \big\{  a\in A \mid  3a=0 \big\}.
\end{eqnarray*}
\end{prop}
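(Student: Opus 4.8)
The plan is to exploit the fact that, for $G=\mathbb{Z}_3$, the exterior resolution \eqref{exterior-resolution} has only three nonzero terms, so that tensoring with $A$ produces the short complex
$$
0\longrightarrow A\underrel{\otimes}{G}\bold\Lambda_2(G)\stackrel{\partial_2}{\longrightarrow}A\underrel{\otimes}{G}\bold\Lambda_1(G)\stackrel{\partial_1}{\longrightarrow}A\underrel{\otimes}{G}\bold\Lambda_0(G)\longrightarrow 0,
$$
whose $i$-th homology is $\Ha_i^{\lambda}(G,A)$. First I would pin down the module structures: $\bold\Lambda_0(G)=\mathbb{Z}[G]$, while Lemma~\ref{l4}(2) shows that $\bold\Lambda_1(G)$ is free of rank one over $\mathbb{Z}[G]$ on the generator $1\wedge g$, and $\bold\Lambda_2(G)=\Lambda^3(\mathbb{Z}[G])$ is infinite cyclic on $\alpha:=1\wedge g\wedge g^2$ with $g$ acting trivially. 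Consequently there are identifications $A\underrel{\otimes}{G}\bold\Lambda_0(G)\cong A\cong A\underrel{\otimes}{G}\bold\Lambda_1(G)$ (via the free generators $1$ and $1\wedge g$) and $A\underrel{\otimes}{G}\bold\Lambda_2(G)\cong A_G$.

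Next I would compute the two differentials under these identifications. A direct computation gives $\partial_1(1\wedge g)=g-1$, so $\partial_1$ becomes the map $a\mapsto a(g-1)$ on $A$, with kernel $A^G$ and image $A\,\Delta(G)$. For $\partial_2$, either a direct expansion of $\partial_2(\alpha)=g\wedge g^2-1\wedge g^2+1\wedge g$, rewriting each summand over the free generator via $g\wedge g^2=g(1\wedge g)$ and $1\wedge g^2=-g^2(1\wedge g)$, or an appeal to Proposition~\ref{p2}(1) (applicable since $n=3$ is odd, equivalently $G$ is oriented as it has odd order) yields $\partial_2(\alpha)=(1+g+g^2)(1\wedge g)$. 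Thus $\partial_2$ becomes $a\mapsto a(1+g+g^2)$, with image $A(1+g+g^2)$ and kernel $\{a\in A\mid a(1+g+g^2)=0\}$.

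The short complex is now identified with $A_G\xrightarrow{\,\cdot(1+g+g^2)\,}A\xrightarrow{\,\cdot(g-1)\,}A$, and I would read off the three homology groups directly. The cokernel of $\partial_1$ gives $\Ha_0^{\lambda}(G,A)=A/A\,\Delta(G)\cong A_G$; the top group is $\Ha_2^{\lambda}(G,A)=\Ker\partial_2=\{a\in A\mid a(1+g+g^2)=0\}$, matching Theorem~\ref{t3}(1) in the oriented case; and the middle group is $\Ha_1^{\lambda}(G,A)=\Ker\partial_1/\im\partial_2=A^G/A(1+g+g^2)$, where the inclusion $\im\partial_2\subseteq\Ker\partial_1$ is checked using $g(1+g+g^2)=1+g+g^2$. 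The trivial-coefficient statement is then an immediate specialization: there $A_G=A^G=A$, the norm acts as multiplication by $3$ so $A(1+g+g^2)=3A$, and $\{a\mid 3a=0\}=\mathrm{Tor}_3(A)$, giving $A$, $A/3A$, and $\mathrm{Tor}_3(A)$ respectively.

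The main obstacle I anticipate is the middle-dimensional group $\Ha_1^{\lambda}$: unlike $\Ha_0^{\lambda}$ (always the coinvariants) and $\Ha_2^{\lambda}$ (the top homology, covered by Proposition~\ref{p2}/Theorem~\ref{t3}), it requires controlling both $\Ker\partial_1$ and $\im\partial_2$ simultaneously and verifying their containment before passing to the quotient. The only delicate computational point is the sign bookkeeping in expressing $\partial_2(\alpha)$ over the single generator $1\wedge g$ (notably the relation $1\wedge g^2=-g^2(1\wedge g)$), since a sign error there would corrupt the norm element and hence all three answers.
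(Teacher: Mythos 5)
Your proof is correct and follows essentially the same route as the paper: identify $\bold\Lambda_0(G)$ and $\bold\Lambda_1(G)$ as free cyclic $\mathbb{Z}[G]$-modules and $\bold\Lambda_2(G)$ as the trivial module $\mathbb{Z}$, compute $\partial_1(1\wedge g)=g-1$ and $\partial_2(1\wedge g\wedge g^2)=(1+g+g^2)(1\wedge g)$ via the relation $1\wedge g^2=-g^2(1\wedge g)$, and read off the homology of $A_G\to A\to A$. The only differences are cosmetic (you invoke Lemma~\ref{l4}(2) and Proposition~\ref{p2}(1) where the paper argues directly), and you inherit the paper's own mild abuse of notation in describing $\Ker\partial_2$ as a subset of $A$ rather than of $A_G$.
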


\begin{proof}
The exterior chain complex for $G$ has the form
$$
0 \longrightarrow\bold\Lambda_{2}(G) \stackrel{\partial_2}{\longrightarrow}
   \bold\Lambda_{1}(G) \stackrel{\partial_1}{\longrightarrow} \bold\Lambda_{0}(G)  \stackrel{\varepsilon}{\longrightarrow} \mathbb{Z}\longrightarrow 0,
$$
where
$$
\bold\Lambda_{0}(G) =\mathbb{Z}[G], \quad
\bold\Lambda_{1}(G) =\textrm{mod}_{\mathbb{Z}[G]}\left\langle 1\wedge g, 1\wedge g^2  \right\rangle,  \quad
\bold\Lambda_{2}(G) =\textrm{mod}_{\mathbb{Z}[G]}\left\langle 1\wedge g\wedge g^2  \right\rangle.
$$
Notice that  $g (1\wedge g^2)= g\wedge g^3=g\wedge 1=- 1\wedge g$. Thus, we have
$$
\bold\Lambda_{1}(G) =\textrm{mod}_{\mathbb{Z}[G]}\left\langle 1\wedge g  \right\rangle,
$$
and hence $\bold\Lambda_{i}(G) $ are cyclic modules. Moreover, $\bold\Lambda_{0}(G)$ and $\bold\Lambda_{1}(G) $ are free $G$-modules whereas the module
$\bold\Lambda_{2}(G) $ is not free since it has the relation
$$
g (1\wedge g\wedge g^2) =g\wedge g^2\wedge 1 =1\wedge g\wedge g^2.
$$
Thus,  $\bold\Lambda_{2}(G) \cong\mathbb{Z}$ is the trivial $G$-module.
Further
$$
\partial_1(1\wedge g)=g-1,\quad \partial_2(1\wedge g\wedge g^2)=(1+g+g^2)(1\wedge g).
$$

For a right $G$-module $A$, we determine the homology of the chain complex
$$
0 \longrightarrow A\underrel{\otimes}{G}\bold\Lambda_{2}(G)
     \longrightarrow A\underrel{\otimes}{G}\bold\Lambda_{1}(G)
       \longrightarrow  A\underrel{\otimes}{G}\bold\Lambda_{0}(G)  \longrightarrow  0.
$$
An easy computation gives
$$
A\underrel{\otimes}{G}\bold\Lambda_{0}(G)  \cong A, \quad
A\underrel{\otimes}{G}\bold\Lambda_{1}(G)  \cong A, \quad
A\underrel{\otimes}{G}\bold\Lambda_{2}(G)  \cong A_G, \quad
$$
and
\begin{eqnarray*}
\im \partial_1 &=& \big\{ a(g-1) \mid a\in A  \big\},\\
\Ker \partial_1 &=&  \big\{ a\otimes (1\wedge g) \mid a(g-1)=0,\,\,a\in A  \big\},\\
\im \partial_2  &=& \big\{ a(1+g+g^2)\otimes (1\wedge g) \mid a\in A  \big\},\\
\Ker \partial_2  &=& \big\{ a\otimes (1\wedge g\wedge g^2) \mid a\in A,\,\, a(1+g+g^2)=0  \big\}.
\end{eqnarray*}
Thus, we obtain
\begin{eqnarray*}
\Ha_{0}^{\lambda}(G,A)&\cong & A/(A\,\Delta[G]) \cong A_G,\\
\Ha_{1}^{\lambda}(G,A)&\cong & A^G /A(1+g+g^2),\\
\Ha_{2}^{\lambda}(G,A) &= & \big\{  a\in A \mid  a(1+g+g^2)=0 \big\}.
\end{eqnarray*}
In particular, for a trivial right $G$-module $A$, we get
\begin{eqnarray*}
\Ha_{0}^{\lambda}(G,A) &\cong & A,\\
\Ha_{1}^{\lambda}(G,A)&\cong & A /3A,\\
\Ha_{2}^{\lambda}(G,A) &= &\mathrm{Tor}_3(A)= \big\{  a\in A \mid  3a=0 \big\}.
\end{eqnarray*}
\end{proof}

\begin{prop}
If $G=\left\langle \, g \, \mid \, g^4=1 \,  \right\rangle$, then
\begin{eqnarray*}
\Ha_{0}^{\lambda}(G,A)&\cong & A/(A\,\Delta[G]) \cong A_G,\\
\Ha_{1}^{\lambda}(G,A) &=& A \otimes (1\wedge g),\\
\Ha_{2}^{\lambda}(G,A) &=&  \left\{  a\otimes (1\wedge g\wedge g^2) \mid a\in A,\,\,  a\otimes (2(1\wedge g)-(1\wedge g^2))=0  \right\},\\
\Ha_{3}^{\lambda}(G,A) &=&   \left\{  a\otimes (1\wedge g\wedge g^2\wedge g^3) \mid a\in A  \right\},
\end{eqnarray*}
and there exists an epimorphism $ A/2A \longrightarrow \Ha_{3}^{\lambda}(G,A)$.
\par
In particular, if $A$ is a trivial $G$-module,  then
\begin{eqnarray*}
\Ha_{0}^{\lambda}(G,A) &\cong&  A/(A\,\Delta[G])\cong A,\\
\Ha_{1}^{\lambda}(G,A) &= & A \otimes (1\wedge g),\\
\Ha_{2}^{\lambda}(G,A) &=&  \left\{  a\otimes (1\wedge g\wedge g^2) \mid a\in A,\,\,  a\otimes (2(1\wedge g)-(1\wedge g^2))=0  \right\} ,\\
\Ha_{3}^{\lambda}(G,A) &=&   \left\{  a\otimes (1\wedge g\wedge g^2\wedge g^3) \mid a\in A  \right\}.
\end{eqnarray*}
Further, there exists an epimorphism  $A \longrightarrow \Ha_{3}^{\lambda}(G,A)$ given by
$a \mapsto a\otimes (1\wedge g\wedge g^2\wedge g^3)$ such that its kernel contains the submodule  $2A$.
\end{prop}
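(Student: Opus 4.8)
The plan is to write out the exterior chain complex of $G=\langle g\mid g^4=1\rangle$ explicitly and to compute each homology group directly, exactly as for the cyclic group of order $3$ in the preceding proposition. First I would identify the four modules $\bold\Lambda_0(G),\ldots,\bold\Lambda_3(G)$ as $\mathbb{Z}[G]$-modules. By \lemref{l4} we have $\bold\Lambda_0(G)=\mathbb{Z}[G]$, $\bold\Lambda_2(G)=\mathrm{mod}_{\mathbb{Z}[G]}\langle 1\wedge g\wedge g^2\rangle$ and $\bold\Lambda_3(G)=\mathrm{mod}_{\mathbb{Z}[G]}\langle 1\wedge g\wedge g^2\wedge g^3\rangle$. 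Using the diagonal action one checks $g(1\wedge g\wedge g^2\wedge g^3)=-(1\wedge g\wedge g^2\wedge g^3)$, so $\bold\Lambda_3(G)\cong\mathbb{Z}$ with $g$ acting by $-1$, while $\bold\Lambda_2(G)$ is free of rank one. The key structural point, and where the computation departs from the order-$3$ case, is $\bold\Lambda_1(G)$: the relation $g(1\wedge g^3)=-(1\wedge g)$ lets one discard $1\wedge g^3$, but $1\wedge g^2$ cannot be absorbed into $1\wedge g$, and one finds $\bold\Lambda_1(G)=\mathbb{Z}[G]\langle 1\wedge g\rangle\oplus\mathbb{Z}[G]\langle 1\wedge g^2\rangle$ with the second summand annihilated by $1+g^2$. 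Thus $\bold\Lambda_1(G)$ is \emph{not} cyclic, unlike for $\mathbb{Z}_3$.

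Next I would record the boundary maps on generators from the wedge boundary formula and reduce them via the $G$-action: $\partial_1(1\wedge g)=g-1$, $\partial_1(1\wedge g^2)=g^2-1$, and $\partial_2(1\wedge g\wedge g^2)=(1+g)(1\wedge g)-(1\wedge g^2)$, while \lemref{l5} gives $\partial_3(1\wedge g\wedge g^2\wedge g^3)=(g-g^2+g^3-1)(1\wedge g\wedge g^2)$. Tensoring with a right $G$-module $A$ then yields $A\otimes_G\bold\Lambda_0(G)\cong A$, $A\otimes_G\bold\Lambda_2(G)\cong A$, $A\otimes_G\bold\Lambda_3(G)\cong A/A(1+g)$, and $A\otimes_G\bold\Lambda_1(G)\cong A\oplus A/A(1+g^2)$, the two summands corresponding to $1\wedge g$ and $1\wedge g^2$.

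Finally I would read off the homology degree by degree. The image of $\partial_1$ is $A\,\Delta(G)$, giving $\Ha_0^{\lambda}(G,A)\cong A_G$, while the top-degree group $\Ha_3^{\lambda}(G,A)=\Ker(\partial_3)$ is already covered by \propref{p2} and the remark following \propref{p3}, which supplies the epimorphism $A/2A\to\Ha_3^{\lambda}(G,A)$. The remaining groups $\Ha_1^{\lambda}=\Ker\partial_1/\im\partial_2$ and $\Ha_2^{\lambda}=\Ker\partial_2/\im\partial_3$ are obtained by computing the relevant kernels and images in the coordinates above and identifying the resulting quotients with the stated descriptions in terms of $a\otimes(1\wedge g)$ and $a\otimes(1\wedge g\wedge g^2)$. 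Specializing to a trivial module, where $g$ acts as the identity so that $(1+g)(1\wedge g)$ becomes $2(1\wedge g)$ and $g-g^2+g^3-1$ becomes $0$, then produces the clean forms in the second list. The main obstacle is precisely the non-cyclicity of $\bold\Lambda_1(G)$: one must keep careful track of the two summands $A$ and $A/A(1+g^2)$ together with the mixed expression for $\partial_2$, and must remember to quotient $\Ker\partial_2$ by the image of $\partial_3$ when computing $\Ha_2^{\lambda}$, a subtlety that collapses only for trivial coefficients.
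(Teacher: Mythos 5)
Your proposal is correct and follows essentially the same route as the paper: identify the cyclic generators of $\bold\Lambda_0(G),\ldots,\bold\Lambda_3(G)$ via the wedge identities, note that $\bold\Lambda_1(G)$ needs the two generators $1\wedge g$ and $1\wedge g^2$ with $g^2(1\wedge g^2)=-(1\wedge g^2)$, compute $\partial_1,\partial_2,\partial_3$ on generators, tensor with $A$, and read off kernels and images, specializing at the end to trivial coefficients. Your explicit decomposition $\bold\Lambda_1(G)\cong\mathbb{Z}[G]\oplus\mathbb{Z}[G]/(1+g^2)$ and the identification $A\otimes_G\bold\Lambda_3(G)\cong A/A(1+g)$ are only mild repackagings of the same computation.
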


\begin{proof}
The exterior chain complex for $G$ has the form
$$
0 \longrightarrow \bold\Lambda_{3}(G) \longrightarrow \bold\Lambda_{2}(G) \longrightarrow \bold\Lambda_{1}(G) \longrightarrow
  \bold\Lambda_{0}(G) \longrightarrow \mathbb{Z}\longrightarrow 0,
$$
where
\begin{eqnarray*}
  \bold\Lambda_{0}(G)  & =&\mathbb{Z}[G],\\
    \bold\Lambda_{1}(G) & =& \mathrm{mod}_{\mathbb{Z}[G]}\left\langle     1\wedge g, 1\wedge g^2,  1\wedge g^3, g\wedge g^2,g\wedge g^3, g^2\wedge g^3     \right\rangle,\\
  \bold\Lambda_{2}(G)  & =&  \mathrm{mod}_{\mathbb{Z}[G]}\left\langle  g\wedge g^2\wedge g^3, 1\wedge g^2\wedge g^3, 1\wedge g\wedge g^3,1\wedge g\wedge g^2  \right\rangle,\\
  \bold\Lambda_{3}(G)  & =& \mathrm{mod}_{\mathbb{Z}[G]}\left\langle  1\wedge g\wedge g^2\wedge g^3  \right\rangle.
\end{eqnarray*}
Since we have the identities
\begin{eqnarray*}
1\wedge g^3 & =&  g^4\wedge g^3=-g^3 (1\wedge g),\\
g\wedge g^2 & =&  g (1\wedge g),\\
g\wedge g^3 & =&  g (1\wedge g^2),\\
g^2\wedge g^3 & =&  g^2 (1\wedge g),\\
g\wedge g^2\wedge g^3 & =&  g (1\wedge g\wedge g^2),\\
1\wedge g^2\wedge g^3 & =&  g^4\wedge g^2\wedge g^3=g^2 (1\wedge g\wedge g^2),\\
1\wedge g\wedge g^3 & =&  g^4\wedge g^5\wedge g^3=g^3 (1\wedge g\wedge g^2),
\end{eqnarray*}
it follows that
\begin{eqnarray*}
  \bold\Lambda_{1}(G) & =&  \mathrm{mod}_{\mathbb{Z}[G]}\left\langle     1\wedge g, 1\wedge g^2     \right\rangle,\\
  \bold\Lambda_{2}(G) & =&  \mathrm{mod}_{\mathbb{Z}[G]}\left\langle  1\wedge g\wedge g^2  \right\rangle.
\end{eqnarray*}
Note that the module  $  \bold\Lambda_{1}(G) $ is not free since it has the relation  $g^2 (1\wedge g^2)=g^2\wedge 1=-(1\wedge g^2)$. Next we compute the boundary maps
\begin{eqnarray*}
\partial_1(1\wedge g) &= & g-1,\\
 \partial_1(1\wedge  g^2) &= & g^2-1,\\
\partial_2(1\wedge g\wedge g^2) &= & (1+g)(1\wedge g)-1\wedge  g^2,\\
\partial_3(1\wedge g\wedge g^2\wedge g^3) &= & (-1+g-g^2+g^3)(1\wedge g\wedge g^2).
\end{eqnarray*}

For a right $G$-module  $A$, we now compute the homology of the chain complex
$$
0 \longrightarrow A\underrel{\otimes}{G}  \bold\Lambda_{3}(G)
     \longrightarrow A\underrel{\otimes}{G}  \bold\Lambda_{2}(G)
       \longrightarrow A\underrel{\otimes}{G}  \bold\Lambda_{1}(G)
          \longrightarrow   A\underrel{\otimes}{G}  \bold\Lambda_{0}(G)
              \longrightarrow  0.
$$
A direct check shows that $A\underrel{\otimes}{G}  \bold\Lambda_{0}(G)  \cong  A$ and
\begin{eqnarray*}
\im \partial_1 &=& \left\{ a(g-1) \mid a\in A  \right\},\\
\Ker \partial_1 &=& \left\{    a\otimes (1\wedge g)+b\otimes (1\wedge g^2) \mid a(g-1)+b(g^2-1)=0,\,\,a,b \in A    \right\},\\
\im  \partial_2 &=& \left\{  a\otimes ((1+g)(1\wedge g)-(1\wedge g^2)) \mid a\in A  \right\},\\
\Ker \partial_2 &=& \left\{  a\otimes (1\wedge g\wedge g^2) \mid a\in A,\,\,  a\otimes ((1+g)(1\wedge g)-(1\wedge g^2))=0  \right\},\\
\im  \partial_3 &=& \left\{  a(-1+g-g^2+g^3) \otimes (1\wedge g\wedge g^2) \mid a\in A  \right\},\\
\Ker \partial_3 &=& \left\{  a\otimes (1\wedge g\wedge g^2\wedge g^3) \mid a\in A,\,\,  a(-1+g-g^2+g^3) \otimes (1\wedge g\wedge g^2)=0  \right\}.
\end{eqnarray*}
This gives
\begin{eqnarray*}
\Ha_{0}^{\lambda}(G,A) &\cong&  A/(A\,\Delta[G])\cong A_G,\\
\Ha_{1}^{\lambda}(G,A) &=& \frac{  \left\{  a\otimes (1\wedge g)+b\otimes (1\wedge g^2) \mid a(g-1)+b(g^2-1)=0,\,\,a,b \in A\right\}}
{ \left\{  a\otimes ((1+g)(1\wedge g)-(1\wedge g^2)) \mid a\in A  \right\}},\\
\Ha_{2}^{\lambda}(G,A) &=& \frac{\left\{ a\otimes (1\wedge g\wedge g^2) \mid a\in A,\,\,  a\otimes ((1+g)(1\wedge g)-(1\wedge g^2))=0 \right\} }
{   \left\{  a(g^2+1)(g-1) \otimes (1\wedge g\wedge g^2) \mid a\in A \right\}},\\
\Ha_{3}^{\lambda}(G,A) &= &
   \left\{
  a\otimes (1\wedge g\wedge g^2\wedge g^3) \mid a\in A,\,\,  a(g^2+1)(g-1) \otimes (1\wedge g\wedge g^2)=0
  \right\}.
\end{eqnarray*}
\par
Finally, suppose that $A$ is a trivial  $G$-module. Since
$$
a\otimes (1\wedge g)+b\otimes (1\wedge g^2)+b\otimes (2(1\wedge g)-(1\wedge g^2))=
 (a+2b)\otimes (1\wedge g)
$$
and  $a+2b$ is an arbitrary element of  $A$, we obtain
\begin{eqnarray*}
\Ha_{1}^{\lambda}(G,A) &=& A \otimes (1\wedge g),\\
\Ha_{2}^{\lambda}(G,A) &=&  \left\{  a\otimes (1\wedge g\wedge g^2) \mid a\in A,\,\,  a\otimes (2(1\wedge g)-(1\wedge g^2))=0  \right\},\\
\Ha_{3}^{\lambda}(G,A) &=&   \left\{  a\otimes (1\wedge g\wedge g^2\wedge g^3) \mid a\in A  \right\}.
\end{eqnarray*}
Further, since
$$
g(1\wedge g\wedge g^2\wedge g^3) =-(1\wedge g\wedge g^2\wedge g^3),
$$
we have
$$
2a\otimes (1\wedge g\wedge g^2\wedge g^3) =0.
$$
Hence, there exists an epimorphism
$ A/2A \longrightarrow \Ha_{3}^{\lambda}(G,A).$
\end{proof}
\medskip

We conclude this section with some results on exterior cohomology. Let $G$ be a finite group of order $n$ and $A$ a left $G$-module.
Applying $\Hom_G(-, A)$ functor on the exterior chain complex
$$
0 \longrightarrow   \bold\Lambda_{n-1}(G)
   \stackrel{\partial_{n-1}}{\longrightarrow} \bold\Lambda_{n-2}(G)
     \stackrel{\partial_{n-2}}{\longrightarrow}   \cdots
     \stackrel{\partial_{2}}{\longrightarrow} \bold\Lambda_{1}(G)
       \stackrel{\partial_{1}}{\longrightarrow}  \bold\Lambda_{0}(G)
        \stackrel{\varepsilon}{\longrightarrow} \mathbb{Z}\longrightarrow 0,
$$
gives the cochain complex
$$
0\longrightarrow  \Hom_G\big(\bold\Lambda_{0}(G) , A\big)  \stackrel{\delta^{0}}{\longrightarrow}  \Hom_G\big(\bold\Lambda_{1}(G) , A\big)
    \stackrel{\delta^{1}}{\longrightarrow}
  \cdots
$$
$$
 \cdots \stackrel{\delta^{n-3}}{\longrightarrow}  \Hom_G\big(\bold\Lambda_{n-2}(G) , A\big)
      \stackrel{\delta^{n-2}}{\longrightarrow} \Hom_G\big(\bold\Lambda_{n-1}(G) , A\big)
   \longrightarrow 0,
$$
where the coboundary map   $\delta^k$ is induced by the boundary map  $\partial_{k+1}$. This gives

$$
\Ha^{n-1}_{\lambda}(G,A)= \Hom_G\big(\bold\Lambda_{n-1}(G) , A\big)/ \im ( \delta^{n-2}).
$$
If  $f\in \Hom_G\big(\bold\Lambda_{n-2}(G) , A\big)$, then
$$
\delta^{n-2} f (\omega)=f\big(\partial_{n-1}(\omega)\big)
$$
where $\omega \in \bold\Lambda_{n-1}(G)$. Since
$\bold\Lambda_{n-1}(G) = \mathrm{mod}_{\mathbb{Z}[G]}\left\langle \alpha \right\rangle$, using the formula for
 $\partial_{n-1}(\alpha)$, we get
$$
\delta^{n-2} f (g \alpha)=
\left\{
\begin{array}{ll}
  g N f(\beta)  & \mbox{if}\,\, G \,\, \mbox{is oriented}, \\
  &\\
  g \left(\sum\limits_{i=1}^{n}\mathrm{sign}\big(\kappa(g_i)\big)g_i\right)f(\beta) &
   \mbox{if}\,\, G \,\, \mbox{is non-oriented}. \\
\end{array}
\right.
$$
If $N=\sum_{g \in G} g$ is the norm element, then $g N = N$ for each $g \in G$. If $G$ is non-oriented, then 
$$
g\left(\sum\limits_{i=1}^{n}\mathrm{sign}\big(\kappa(g_i)\big)g_i\right)=
\mathrm{sign}\big(\kappa(g)\big)\left(\sum\limits_{i=1}^{n}\mathrm{sign}\big(\kappa(g_i)\big)g_i\right)
$$
for all $g \in G$, and hence
$$
\delta^{n-2} f (g \alpha)=
\left\{
\begin{array}{ll}
 N f(\beta)  & \mbox{if}\,\, G \,\, \mbox{is oriented}, \\
  \mathrm{sign}\big(\kappa(g)\big) \left(\sum\limits_{i=1}^{n}\mathrm{sign}\big(\kappa(g_i)\big)g_i\right)f(\beta) &
   \mbox{if}\,\, G \,\, \mbox{is non-oriented}. \\
\end{array}
\right.
$$
If $A$ is a trivial left $G$-module, then
$$
\delta^{n-2} f (g \alpha)=
\left\{
\begin{array}{ll}
  n f(\beta)  & \mbox{if}\,\, G \,\, \mbox{is oriented }, \\
   0 &   \mbox{if}\,\, G \,\, \mbox{is non-oriented}.
\end{array}
\right.
$$

Thus, we obtain the following result.

\begin{theorem} \label{t4}
If $f\in \Hom_G\big(\bold\Lambda_{n-2}(G) , A\big)$, then
$$
\delta^{n-2} f (g \alpha)=
\left\{
\begin{array}{ll}
  N f(\beta)  & \mbox{if}\,\, G \,\, \mbox{is oriented}, \\
  \mathrm{sign}\big(\kappa(g)\big) \left(\sum\limits_{i=1}^{n}\mathrm{sign}\big(\kappa(g_i)\big)g_i\right)f(\beta) &
   \mbox{if}\,\, G \,\, \mbox{is non-oriented}. \\
\end{array}
\right.
$$

In particular, if  $A$ is a trivial right  $G$-module, then
$$
\delta^{n-2} f (g \alpha)=
\left\{
\begin{array}{ll}
  n f(\beta)  & \mbox{if}\,\, G \,\, \mbox{is oriented}, \\
   0 &   \mbox{if}\,\, G \,\, \mbox{is non-oriented}. \\
\end{array}
\right.
$$
\end{theorem}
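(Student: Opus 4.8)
The plan is to read off both formulas directly from the definition of the induced coboundary together with the computation of $\partial_{n-1}(\alpha)$ already recorded in \thmref{t2}. First I would recall that, for $f\in\Hom_G\big(\bold\Lambda_{n-2}(G),A\big)$ and any $\omega\in\bold\Lambda_{n-1}(G)$, the coboundary satisfies $\delta^{n-2}f(\omega)=f\big(\partial_{n-1}(\omega)\big)$. Since $\bold\Lambda_{n-1}(G)=\mathrm{mod}_{\mathbb{Z}[G]}\langle\alpha\rangle$ is cyclic, every element has the form $g\alpha$, and both $\partial_{n-1}$ and $f$ are homomorphisms of $G$-modules, so $\delta^{n-2}f(g\alpha)=f\big(g\,\partial_{n-1}(\alpha)\big)$. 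It therefore remains to substitute the two cases of \thmref{t2} and to simplify the resulting scalar acting on $f(\beta)$.

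In the oriented case \thmref{t2} gives $\partial_{n-1}(\alpha)=N\beta$, whence $\delta^{n-2}f(g\alpha)=f\big(gN\beta\big)=gN\,f(\beta)$ by the $\mathbb{Z}[G]$-linearity of $f$. The key observation is that $gN=N$ for every $g\in G$, because left multiplication by $g$ merely permutes the summands of the norm element $N=\sum_{i=1}^{n}g_i$; this yields $\delta^{n-2}f(g\alpha)=N f(\beta)$. In the non-oriented case \thmref{t2} gives $\partial_{n-1}(\alpha)=\big(\sum_{i=1}^{n}\mathrm{sign}(\kappa(g_i))g_i\big)\beta$, and the identity I would establish is
$$
g\left(\sum_{i=1}^{n}\mathrm{sign}\big(\kappa(g_i)\big)g_i\right)=\mathrm{sign}\big(\kappa(g)\big)\left(\sum_{i=1}^{n}\mathrm{sign}\big(\kappa(g_i)\big)g_i\right).
$$
This follows because $\kappa$ is a homomorphism, so $\mathrm{sign}(\kappa(gg_i))=\mathrm{sign}(\kappa(g))\,\mathrm{sign}(\kappa(g_i))$, and as $g_i$ ranges over $G$ so does $gg_i$; reindexing the sum then extracts the scalar $\mathrm{sign}(\kappa(g))$. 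Applying $f$ gives the claimed formula in the non-oriented case.

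For the trivial-module specialization I would evaluate the two scalars under the trivial $G$-action. In the oriented case $N$ acts as multiplication by $n$, giving $nf(\beta)$. In the non-oriented case $\sum_{i=1}^{n}\mathrm{sign}(\kappa(g_i))g_i$ acts as the integer $\sum_{i=1}^{n}\mathrm{sign}(\kappa(g_i))$; since $G$ is non-oriented the composite $\pi\circ\kappa\colon G\to\mathbb{Z}_2$ is a surjective homomorphism, so its kernel has index $2$ and exactly half of the $g_i$ satisfy $\mathrm{sign}(\kappa(g_i))=+1$ while the other half give $-1$. Hence the sum vanishes and $\delta^{n-2}f(g\alpha)=0$, as asserted.

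The argument is essentially mechanical once \thmref{t2} is available; the only genuinely substantive points are the twisted equivariance identity in the non-oriented case and the counting argument showing that the signed sum of group elements vanishes under the trivial action. Both reduce to the multiplicativity of $\mathrm{sign}\circ\kappa$, which I expect to be the heart of the verification.
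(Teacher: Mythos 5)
Your proof is correct and follows essentially the same route as the paper: apply $\delta^{n-2}f(g\alpha)=f\bigl(g\,\partial_{n-1}(\alpha)\bigr)$ using the cyclicity of $\bold\Lambda_{n-1}(G)$ and Theorem~\ref{t2}, then simplify via $gN=N$ in the oriented case and the twisted identity $g\bigl(\sum_i\mathrm{sign}(\kappa(g_i))g_i\bigr)=\mathrm{sign}(\kappa(g))\bigl(\sum_i\mathrm{sign}(\kappa(g_i))g_i\bigr)$ in the non-oriented case, with the trivial-module specialization following from $N\mapsto n$ and the vanishing of the signed sum because $\ker(\pi\circ\kappa)$ has index $2$. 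No gaps.
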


As a consequence of the preceding result, we have

\begin{corollary}
If $G$ is a non-oriented group and $A$ a trivial $G$-module, then
$$
\Ha^{n-1}_{\lambda}(G,A)=
   \Hom_G\big(\bold\Lambda_{n-1}(G) , A\big).
$$
\end{corollary}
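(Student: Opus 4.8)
The plan is to read off the top exterior cohomology group directly from the tail of the cochain complex, and then invoke \thmref{t4} to show that the last coboundary map is identically zero, so that no elements are killed in the top degree.

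First I would observe that since $|G| = n$ we have $\bold\Lambda_n(G) = \Lambda^{n+1}\big(\mathbb{Z}[G]\big) = 0$, so the exterior cochain complex terminates with $\Hom_G\big(\bold\Lambda_{n-1}(G), A\big)$ in its top slot. As already recorded in the discussion preceding \thmref{t4}, this gives
$$
\Ha^{n-1}_{\lambda}(G,A) = \Hom_G\big(\bold\Lambda_{n-1}(G), A\big) \big/ \im(\delta^{n-2}).
$$
Hence the corollary reduces to the single claim that $\im(\delta^{n-2}) = 0$, that is, $\delta^{n-2} f = 0$ for every $f \in \Hom_G\big(\bold\Lambda_{n-2}(G), A\big)$.

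Next I would exploit that $\bold\Lambda_{n-1}(G) = \mathrm{mod}_{\mathbb{Z}[G]}\langle \alpha \rangle$ is cyclic over $\mathbb{Z}[G]$, so as an abelian group it is spanned by the elements $g\alpha$, $g \in G$. Since $\delta^{n-2} f$ is additive, to conclude that $\delta^{n-2} f = 0$ it suffices to verify $\delta^{n-2} f (g\alpha) = 0$ for each $g \in G$. But this is precisely the ``non-oriented, trivial module'' case of \thmref{t4}, which yields $\delta^{n-2} f (g\alpha) = 0$. Therefore $\delta^{n-2} f = 0$, whence $\im(\delta^{n-2}) = 0$ and $\Ha^{n-1}_{\lambda}(G,A) = \Hom_G\big(\bold\Lambda_{n-1}(G), A\big)$.

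The argument carries no real obstacle: all the computational work has already been done inside \thmref{t4}, and the only point requiring any care is the routine reduction from vanishing on the $\mathbb{Z}[G]$-module generators $g\alpha$ to vanishing on all of $\bold\Lambda_{n-1}(G)$, which is immediate once one notes that $\alpha$ generates this module. I would therefore expect the whole proof to be a short one-paragraph deduction from the theorem.
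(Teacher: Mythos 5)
Your proposal is correct and follows exactly the route the paper intends: the corollary is stated as an immediate consequence of Theorem \ref{t4}, using the identification $\Ha^{n-1}_{\lambda}(G,A)=\Hom_G\big(\bold\Lambda_{n-1}(G),A\big)/\im(\delta^{n-2})$ already recorded before that theorem, together with the vanishing $\delta^{n-2}f(g\alpha)=0$ in the non-oriented, trivial-module case. Your added remark that it suffices to check vanishing on the generators $g\alpha$ of the cyclic module $\bold\Lambda_{n-1}(G)$ is the only detail the paper leaves implicit, and it is handled correctly.
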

\medskip

\section{(Co)restriction homomorphisms in cohomology}\label{sec8}
In this final section, we investigate restriction and corestriction homomorphisms for symmetric and exterior cohomologies of groups. Throughout the section, $H$ is a subgroup of a group $G$ and $A$ is a right $G$-module. In what follows, the cochain complex $\{\Ca^*(G,A), \delta^* \}$ is as in Subsection \ref{another-complex}.

\subsection{(Co)restriction  homomorphism in classical cohomology}\label{sec-cores-classical-cohom}
Since $A$ is a $G$-module, it can be viewed as an $H$-module. A projective resolution $\Ca_* \to \mathbb{Z}$ of the trivial $G$-module $\mathbb{Z}$ can be viewed as a projective resolution of the trivial $H$-module  $\mathbb{Z}$.
Hence the  natural homomorphism of cochain complexes
$$
\Hom_G(\Ca_*, A) \longrightarrow \Hom_H(\Ca_*, A)
$$
gives a homomorphism of cohomology groups
$$
\res^G_H: \Ha^n(G, A) \longrightarrow \Ha^n(H, A),
$$
for each $n \ge 0$, called the {\it restriction  homomorphism}.
\bigskip

Suppose that $H$ is a subgroup of $G$ of finite index $k$. Let $\{c_1, c_2, \ldots, c_k \}$ be a fixed set of representatives of left cosets of $H$ in $G$. Then $G= \bigcup_{i=1}^k c_iH$. By convention if $cH=H$, then $c=1$. For an element $g \in G$, let $\bar{g}$ denote the unique coset representative $c_i$ such that $c_i H=gH$. If $g_1,\ldots,g_n \in G$, we set the notations
$$x_1=g_1\ldots g_n,~x_2=g_2\ldots g_n,~\ldots,~x_n=g_n.$$
It is well-known \cite[Proposition 2.5.1]{Weiss} that there is a  natural homomorphism of cochain complexes $$\tr^*: \Ca^*(H,A)\longrightarrow \Ca^*(G,A),$$ which for each $n \ge 0$ is  given by
\begin{equation}\label{sigma-prime}
\tr^n(\sigma)(g_1,\ldots,g_n)=\sum_{i=1}^k\overline{x_1c_i}~\sigma\big(\overline{x_1c_i}^{-1}g_1\overline{x_2c_i},\overline{x_2c_i}^{-1}g_2\overline{x_3c_i},\ldots,\overline{x_nc_i}^{-1}g_n\overline{c_i}\big)
\end{equation}
for $g_1,\ldots,g_n\in G$ and $\sigma\in \Ca^n(H, A)$. This yields the {\it corestriction homomorphism} $$\cores_H^G:\Ha^*(H,A)\longrightarrow \Ha^*(G,A)$$ given by $$\cores_H^G\big([\sigma] \big)=\big[\tr^n(\sigma)\big],$$
where  $\sigma\in \Za^n(H, A)$, the group of $n$-cocycles. Notice that $\overline{x_ic_j}^{-1}g_i\overline{x_{i+1}c_j}\in H$ and $\overline{x_nc_j}^{-1}g_n\overline{c_j}\in H$ for each $i\in\{1,\ldots,n-1\}$ and $j \in \{1,\ldots,k\}$.

\begin{remark}
The restriction and the corestriction homomorphisms for the classical cohomology of groups can also be defined using the Eckman-Shapiro \cite[Proposition 6.2]{Brown}, which crucially depends on the fact that the resolutions are free. However, this approach does not work for our purpose since the resolutions used for defining exterior and symmetric cohomology need not be free in general.
\end{remark}

\begin{remark}\label{Trace-rem}
We can interpret the preceding  explicit construction of the corestriction homomorphism for the cochain complex \eqref{eqno3}. This will be useful in defining corestriction homomorphism for symmetric and exterior cohomology.  Recall the isomorphism \eqref{iso-two-complexes}
\begin{equation*}
\psi^n:\Hom_G\big(\mathbb{Z}[G^{n+1}], A\big) \longrightarrow \Ca^n(G,A).
\end{equation*}
For each $n \ge 0$, define $$\Tr^n:\Hom_H\big(\mathbb{Z}[H^{n+1}], A\big)\longrightarrow \Hom_G\big(\mathbb{Z}[G^{n+1}], A\big)$$ as  $$\Tr^n=(\psi^n)^{-1} \circ \tr^n \circ \psi^n.$$
More precisely, for $g_0, g_1, \ldots, g_n \in G$ and $\sigma \in \Hom_H\big(\mathbb{Z}[H^{n+1}], A\big)$, we have

\begin{equation}\label{Trace-formula}
\end{equation}
\begin{small}
\begin{eqnarray*}
& & \Tr^n (\sigma)(g_0, g_1,\ldots, g_n)\\
&=& (\psi^n)^{-1} \circ \tr^n \circ \psi^n (\sigma)(g_0, g_1,\ldots, g_n)\\
&=& g_0.\tr^n \circ \psi^n (\sigma) (g_0^{-1}g_1, g_1^{-1}g_2, \ldots, g_{n-1}^{-1}g_n)\\
&=& g_0. \sum_{i=1}^k \overline{g_0^{-1} g_n c_i} ~\psi^n (\sigma) \Big(\overline{g_0^{-1} g_nc_i}^{-1} (g_0^{-1}g_1) \overline{g_1^{-1} g_nc_i}, ~\overline{g_1^{-1} g_nc_i}^{-1} (g_1^{-1}g_2) \overline{g_2^{-1} g_nc_i},~ \ldots,  ~\overline{g_{n-1}^{-1} g_n c_i}^{-1}  (g_{n-1}^{-1}g_n) \overline{c_i}  \Big)\\
&=&  g_0. \sum_{i=1}^k \overline{g_0^{-1} g_n c_i} ~\sigma \Big(1,~ \overline{g_0^{-1} g_nc_i}^{-1} (g_0^{-1}g_1) \overline{g_1^{-1} g_nc_i},~ \overline{g_0^{-1} g_nc_i}^{-1} (g_0^{-1}g_2) \overline{g_2^{-1} g_nc_i},~\ldots,\overline{g_0^{-1} g_nc_i}^{-1} (g_0^{-1}g_n) \overline{c_i} \Big),
\end{eqnarray*}
\end{small}
where $\overline{g_0^{-1} g_nc_i}^{-1} (g_0^{-1}g_t) \overline{g_2^{-1} g_nc_i}, ~\overline{g_0^{-1} g_nc_i}^{-1} (g_0^{-1}g_n) \overline{c_i}  \in H$ for each $1 \le t \le n$ and $1 \le i \le k$.
\end{remark}
\vspace*{1mm}

\subsection{(Co)restriction  homomorphism in symmetric cohomology}
Recall that, by Lemma \ref{staic-alternate}, the cohomology of the cochain complex $\{\KS^* (G,A), \delta^*\}$ is the symmetric cohomology $\Has^*(G,A)$. The  natural homomorphism of cochain complexes
$$
\KS^* (G,A)\longrightarrow \KS^* (H,A)
$$
gives the \textit{restriction homomorphism} of symmetric cohomology groups
$$
\sres^G_H: \Has^*(G, A) \longrightarrow\Has^*(H, A).
$$
See also \cite[Corollary 5.2]{Singh} for an alternate description. The direct construction of corestriction homomorphism for classical cohomology in Subsection \ref{sec-cores-classical-cohom} was used by Todea \cite{Todea} to define a corestriction homomorphism for symmetric cohomology.

\begin{prop}\label{cores-map-symmetric}
Let $H$ be a finite index subgroup of a group $G$ and $A$ a $G$-module. Then there is a corestriction homomorphism $$\scores_H^G:\Has^n(H,A)\longrightarrow \Has^n(G,A).$$
\end{prop}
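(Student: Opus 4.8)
The plan is to show that the classical trace homomorphism $\Tr^{*}$ of \rmkref{Trace-rem} restricts to a cochain map between the skew-symmetric subcomplexes, and then to pass to cohomology using \lemref{staic-alternate}. Recall that, by \lemref{staic-alternate}, $\Has^{n}(H,A)$ and $\Has^{n}(G,A)$ are the cohomology groups of the complexes $\{\KS^{*}(H,A),\delta^{*}\}$ and $\{\KS^{*}(G,A),\delta^{*}\}$, where $\KS^{n}$ is the subgroup of skew-symmetric cochains inside $\Ka^{n}=\Hom_{G}(\mathbb{Z}[G^{n+1}],A)$. Since $\Tr^{*}=(\psi^{*})^{-1}\circ\tr^{*}\circ\psi^{*}$ is already a cochain map $\Ka^{*}(H,A)\to\Ka^{*}(G,A)$ for the classical complexes, it suffices to prove the single claim that $\Tr^{n}$ carries $\KS^{n}(H,A)$ into $\KS^{n}(G,A)$; everything else is then formal.

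The heart of the argument is therefore the assertion that \emph{if $\sigma\in\Ka^{n}(H,A)$ is skew-symmetric, then so is $\Tr^{n}(\sigma)$.} The conceptual reason to expect this is that the transposition action of $\Sigma_{n+1}$ defining $\KS^{*}$ is induced by permuting the tensor coordinates of $\mathbb{Z}[G^{n+1}]$, and this permutation action commutes with the diagonal $G$- and $H$-actions. Consequently the coset-averaging underlying the trace should commute with the $\Sigma_{n+1}$-action, i.e. $\Tr^{n}(\tau_{j}\sigma)=\tau_{j}\,\Tr^{n}(\sigma)$ for each generator $\tau_{j}=(j,j+1)$, $0\le j<n$. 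Granting this, skew-symmetry of $\sigma$ (that is, $\tau_{j}\sigma=-\sigma$) immediately gives $\tau_{j}\,\Tr^{n}(\sigma)=\Tr^{n}(-\sigma)=-\Tr^{n}(\sigma)$, so $\Tr^{n}(\sigma)\in\KS^{n}(G,A)$.

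To make this rigorous I would verify the equivariance $\Tr^{n}(\tau_{j}\sigma)=\tau_{j}\,\Tr^{n}(\sigma)$ directly from the explicit formula \eqref{Trace-formula}. Writing $\Tr^{n}(\sigma)(g_{0},\ldots,g_{n})$ as the coset sum $g_{0}\sum_{i}\overline{g_{0}^{-1}g_{n}c_{i}}\,\sigma(1,a_{1}^{(i)},\ldots,a_{n}^{(i)})$ with $a_{j}^{(i)}\in H$, one transposes the arguments $g_{j}\leftrightarrow g_{j+1}$ and checks that each summand is sent to the corresponding summand of $-\Tr^{n}(\sigma)$ after relabelling the coset representatives. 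The main obstacle is the bookkeeping of the representatives $\overline{g_{j}^{-1}g_{n}c_{i}}$ under the swap, together with the two boundary cases that the formula treats asymmetrically: the transposition $\tau_{0}$ involving the outer factor $g_{0}$ and the normalised first slot $1$, and the transposition $\tau_{n-1}$ involving the last argument $g_{n}$ that governs the running products. For the interior transpositions $1\le j\le n-2$ the swap acts cleanly on the middle slots and the skew-symmetry of $\sigma$ produces the required sign, while the two boundary cases need the $H$-equivariance of $\sigma$ to move the relevant representatives across and thereby reduce to the interior situation.

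Once the claim is established, $\Tr^{*}$ restricts to a cochain map $\{\KS^{*}(H,A),\delta^{*}\}\to\{\KS^{*}(G,A),\delta^{*}\}$, and I define $\scores_{H}^{G}\colon\Has^{n}(H,A)\to\Has^{n}(G,A)$ to be the induced map on cohomology, sending the class of a skew-symmetric cocycle $\sigma$ to the class of $\Tr^{n}(\sigma)$. This is precisely the symmetric analogue of the classical construction of \secref{sec-cores-classical-cohom} and parallels Todea's treatment \cite{Todea} of the invariant subcomplex; alternatively, one could transport Todea's corestriction through the isomorphism $\Cas^{*}\cong\KS^{*}$ of \cite[Lemma 3.5]{Pirashvili}.
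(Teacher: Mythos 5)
Your proposal is correct and takes essentially the same route as the paper: the paper likewise restricts $\Tr^n$ to the skew-symmetric subcomplex, asserts that $\Tr^n(\sigma)\in\KS^n(G,A)$ for $\sigma\in\KS^n(H,A)$, cites Todea for the commutativity with $\delta^n$, and sets $\scores_H^G\big([\sigma]\big)=\big[\Tr^n(\sigma)\big]$. Your sketch of the $\Sigma_{n+1}$-equivariance check (including the relabelling of coset representatives for the boundary transpositions) merely supplies detail that the paper leaves as a one-line assertion.
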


\begin{proof}
For $n\geq 0$ and $\sigma\in \KS^n(H,A)$, it follows that $\Tr^n(\sigma)\in \KS^n(G,A)$. Further, as in \cite[Lemma 3.1]{Todea}, the following diagram commutes

\begin{displaymath}
 \xymatrix{\KS^n(H,A)\ar[rr]^{\sigma \mapsto \Tr^n(\sigma)}\ar[d]^{\delta^n} && \KS^n(G,A) \ar[d]^{\delta^n} \\
 \KS^{n+1}(H,A)\ar[rr]^{\sigma \mapsto \Tr^{n+1}(\sigma)}  &&\KS^{n+1}(G, A).   }
\end{displaymath}
We define  $$\scores_H^G:\Has^n(H,A)\longrightarrow \Has^n(G,A)$$ by setting
$$\scores_H^G\big([\sigma]\big)=\big[\Tr^n(\sigma)\big],$$
where $\sigma\in \KS^n(H,A)$ is a symmetric $n$-cocycle. Thus, $\scores_H^G$ is the desired corestriction homomorphism.
\end{proof}

\subsection{(Co)restriction homomorphism in exterior cohomology}
Recall that the cohomology of the cochain complex $\{\Ka^*_\lambda (G,A), \delta^* \}$ is the exterior cohomology $\Ha_\lambda^*(G,A)$. The  natural homomorphism of cochain complexes
$$
\Ka^*_\lambda(G,A)\longrightarrow \Ka^*_\lambda (H,A)
$$
gives the \textit{restriction homomorphism} of exterior cohomology groups
$$
\lres^G_H:\Ha_\lambda^*(G, A) \longrightarrow\Ha_\lambda^*(H, A).
$$
\par

\begin{prop}
Let $H$ be a finite index subgroup of a group $G$ and $A$ a $G$-module. Then there is a corestriction homomorphism $$\lcores_H^G:\Ha^n_\lambda(H,A)\longrightarrow \Ha^n_\lambda(G,A).$$
\end{prop}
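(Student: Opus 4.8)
The plan is to mirror the construction of the corestriction homomorphism for symmetric cohomology in \propref{cores-map-symmetric}. The essential mechanism there was that the trace map $\Tr^n$ of \rmkref{Trace-rem} carries skew-symmetric cochains to skew-symmetric cochains and commutes with the coboundary $\delta^n$; consequently it restricts to a chain map on the skew-symmetric subcomplexes and descends to cohomology. Since $\Ka^n_\lambda(G,A) \subseteq \KS^n(G,A)$ by construction, the skew-symmetry half of the argument is already supplied by \propref{cores-map-symmetric}. Thus the only genuinely new verification is that $\Tr^n$ preserves the additional vanishing condition defining $\Ka^n_\lambda$, namely that a cochain vanishes whenever two consecutive arguments coincide.

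Concretely, I would fix $\sigma \in \Ka^n_\lambda(H,A)$; since $\Ka^n_\lambda(H,A) \subseteq \KS^n(H,A)$, the computation of \propref{cores-map-symmetric} already yields $\Tr^n(\sigma) \in \KS^n(G,A)$. It then remains to show that $\Tr^n(\sigma)(g_0,\ldots,g_n)=0$ whenever $g_j=g_{j+1}$ for some $0 \le j < n$. For this I would read off the arguments of $\sigma$ directly from the explicit formula \eqref{Trace-formula}: in the $i$-th summand the $t$-th argument of $\sigma$ is $\overline{g_0^{-1}g_nc_i}^{-1}\,(g_0^{-1}g_t)\,\overline{g_t^{-1}g_nc_i}$, which (as noted in \rmkref{Trace-rem}) lies in $H$, so that the defining property of $\sigma$ is applicable to it.

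The key observation is then the following. If $g_j=g_{j+1}$, then $g_0^{-1}g_j=g_0^{-1}g_{j+1}$ and $g_j^{-1}g_n=g_{j+1}^{-1}g_n$, whence the coset representatives satisfy $\overline{g_j^{-1}g_nc_i}=\overline{g_{j+1}^{-1}g_nc_i}$. Therefore the $j$-th and $(j+1)$-th arguments of $\sigma$ coincide in every summand, and $\sigma$ vanishes on such a tuple by the defining property of $\Ka^n_\lambda(H,A)$. Hence each summand is zero, so $\Tr^n(\sigma)(g_0,\ldots,g_n)=0$, proving $\Tr^n(\sigma)\in \Ka^n_\lambda(G,A)$. (The degenerate case $j=0$ is uniform with the rest: both slot $0$ and slot $1$ reduce to the identity element of $H$.)

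Finally, since $\Tr^*$ commutes with the coboundary maps of the ambient complexes exactly as in the classical and symmetric settings, its restriction gives a chain map $\Tr^*:\Ka^*_\lambda(H,A)\to \Ka^*_\lambda(G,A)$, and passing to cohomology defines the desired corestriction by $\lcores_H^G\big([\sigma]\big)=\big[\Tr^n(\sigma)\big]$. I expect the main obstacle to be purely bookkeeping: tracking the coset representatives $\overline{g_t^{-1}g_nc_i}$ carefully enough to see the coincidence of the $j$-th and $(j+1)$-th transported arguments, and confirming throughout that these arguments lie in $H$ so that the vanishing property of $\sigma$ is legitimately invoked. No essentially new idea beyond \propref{cores-map-symmetric} appears to be required.
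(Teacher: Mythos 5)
Your proposal is correct and follows essentially the same route as the paper: verify via the last line of \eqref{Trace-formula} that $\Tr^n$ preserves the vanishing condition on tuples with two equal consecutive entries (the paper states this more tersely, while you spell out the coincidence of the transported $j$-th and $(j{+}1)$-th arguments, including the $j=0$ case), then invoke commutativity with $\delta^n$ as in Proposition~\ref{cores-map-symmetric} and pass to cohomology. No substantive difference from the paper's proof.
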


\begin{proof}
Let $n\geq 0$ and $\sigma\in \Ka^n_\lambda(H,A)$. Then $\sigma(h_0,\ldots,h_i,h_i,\ldots,h_n)=0$ for all $0\leq i< n$  and $h_0, h_1, \ldots, h_n \in H$. It follows from the last equality in \eqref{Trace-formula} that if  $g_0, g_1, \ldots,g_n \in G$ with $g_j=g_{j+1}$  for some $0\leq j< n$, then $\Tr^n (\sigma)(g_0,\ldots,g_j,g_j,\ldots,g_n)=0$, and hence $\Tr^n(\sigma)\in \Ka^n_\lambda(G,A)$. In addition, as in Proposition \ref{cores-map-symmetric}, the following diagram commutes

\begin{displaymath}
 \xymatrix{\Ka^n_\lambda(H,A)\ar[rr]^{\sigma \mapsto \Tr^n(\sigma)}\ar[d]^{\delta^n} && \Ka^n_\lambda(G,A) \ar[d]^{\delta^n} \\
\Ka^{n+1}_\lambda(H,A)\ar[rr]^{\sigma \mapsto \Tr^{n+1}(\sigma)}  &&\Ka^{n+1}_\lambda(G,A).   }
\end{displaymath}
Thus, we can define the \textit{corestriction homomorphism} 
 $$\lcores_H^G:\Ha^n_\lambda(H,A)\longrightarrow \Ha^n_\lambda(G,A)$$ by setting
$$\lcores_H^G\big([\sigma]\big)=\big[\Tr^n(\sigma)\big],$$
where $\sigma\in \Ka^n_\lambda(H,A)$ is an exterior $n$-cocycle.
\end{proof}
\medskip

\begin{question}
We conclude with the following questions:
\begin{enumerate}
\item How are the groups $\Ha_{2}(G,\mathbb{Z})$,  $\Ha_{2}^{\lambda}(G,\mathbb{Z})$ and $\Has_{2}(G,\mathbb{Z})$ related, where  $\mathbb{Z}$ is a trivial $G$-module? In particular, is there a Hopf type formula for the second exterior and symmetric homologies?
\item  Do there exist restriction-corestriction formulas for exterior and symmetric (co)homologies?
\item  What can we say about the homomorphism $\lambda_* \iota_*$?
\end{enumerate}
\end{question}

\begin{ack}
The authors thank the anonymous referee for many useful comments which considerably improved the paper. Mariam Pirashvili is also thanked for her interest in this work. Bardakov and Neshchadim are supported by the Russian Science Foundation project no. 19-41-02005. Singh is supported by the SERB MATRICS Grant MTR/2017/000018.
\end{ack}

\end{document}